\documentclass[a4paper,11pt]{scrartcl}

\usepackage[utf8]{inputenc}
\usepackage[hidelinks]{hyperref}
\usepackage{geometry}
\usepackage{mathtools}
\usepackage{amsfonts}
\usepackage{amssymb}
\usepackage{amsthm}
\usepackage{csquotes}
\usepackage{tikz-cd}
\usepackage[title]{appendix}

\setkomafont{section}{\large}

\newtheoremstyle{style}
{3.25ex} 
{3.25ex} 
{} 
{} 
{\bfseries\sffamily} 
{} 
{.5em} 
{} 

\newtheoremstyle{style*}
{3.25ex} 
{3.25ex} 
{} 
{} 
{} 
{} 
{0em} 
{} 

\theoremstyle{style}
\newtheorem{example}[subsection]{Example}
\newtheorem{definition}[subsection]{Definition}
\newtheorem{remark}[subsection]{Remark}

\newtheorem{notation}[subsection]{Notation}

\newtheorem{subexample}[subsubsection]{Example}
\newtheorem{subdefinition}[subsubsection]{Definition}
\newtheorem{subremark}[subsubsection]{Remark}
\newtheorem{sublemma}[subsubsection]{Lemma}
\newtheorem{subproposition}[subsubsection]{Proposition}
\newtheorem{subtheorem}[subsubsection]{Theorem}
\newtheorem{subnotation}[subsubsection]{Notation}
\newtheorem{subcorollary}[subsubsection]{Corollary}
\newtheorem{subconstruction}[subsubsection]{Construction}
\newtheorem{subvariant}[subsubsection]{Variant}
\newtheorem*{thmA}{Theorem A}
\newtheorem*{thmB}{Theorem B}

\theoremstyle{style*}
\newtheorem*{intermediate}{}

\newcommand\defeq{\mathrel{\vcenter{\baselineskip0.5ex \lineskiplimit0pt\hbox{\scriptsize.}\hbox{\scriptsize.}}}=}
\newcommand\eqdef{=\mathrel{\vcenter{\baselineskip0.5ex \lineskiplimit0pt\hbox{\scriptsize.}\hbox{\scriptsize.}}}}

\makeatletter
\def\namedlabel#1#2{\begingroup\def\@currentlabel{#2}\label{#1}\endgroup}
\makeatother

\title{\vspace{-0.65cm}\Large Weight filtrations via slopes}
\author{\large Carolyn Echter \thanks{Fakultät für Mathematik, Universität Regensburg, 93040 Regensburg, Germany \\ Email address: \href{mailto:carolyn.echter@ur.de}{\nolinkurl{carolyn.echter@ur.de}}}}
\date{\normalsize June 1, 2026}

\begin{document}

\maketitle
\vspace{-1cm}

\begin{abstract} \noindent
Mixed structures and their weight filtrations appear in various contexts, prominently Hodge theory and the theory of Galois representations. In the setting of André's formalisation of slopes, we propose an abstract framework explaining why mixed structures have shared characteristics.
\end{abstract}

\section{Introduction}

Weight filtrations have their origin in \cite{ARTICLE:2, ARTICLE:8}: While classical Hodge theory yields pure Hodge structures on the cohomology groups of smooth projective complex varieties, Deligne constructed a Hodge and a weight filtration on the cohomology of an arbitrary complex variety such that the Hodge filtration induces pure Hodge structures on the graded pieces of the weight filtration. He called the resulting filtered objects mixed Hodge structures. Simpson \cite{MISC:3} later introduced mixed twistor structures as a generalisation. In his work \cite{ARTICLE:5} on the Weil conjectures, Deligne extended the concept of weights to Galois representations and put weight filtrations on the $l$-adic cohomology groups of varieties over finite fields. In analogy to the existence of a limiting mixed Hodge structure in Hodge theory, Deligne's theory of weights is expected to extend further to local fields, over which we refer to mixed representations as mixed monodromy-weight structures. Mixed Hodge, twistor and monodromy-weight structures share many features, for example they form abelian categories. \\
Weights are closely related to slopes: Both control when nonzero morphisms are admissible. The key difference is that slope filtrations are descending in terms of the slopes of the graded pieces, while weight filtrations are ascending. Based on André's formalisation of slope filtrations \cite{ARTICLE:1}, we will set up an abstract framework within which the shared properties of mixed structures become a formal consequence of the fact that the weights of the graded pieces increase along a weight filtration. \\

The appropriate categories for this purpose are quasi-abelian categories, which possess many properties of abelian categories, but in which epi-monics need not be isomorphisms. In our setting, slopes will be the obstruction to isomorphy. \\
Let $\mathcal{A}$ be a quasi-abelian category with a slope function $\mu$ taking values in a totally ordered, uniquely divisible abelian group $\Lambda$. We recall these notions in \ref{sec:2}; they are adopted from \cite{ARTICLE:1}, with the difference that we require slopes to strictly increase along epi-monics which fail to be isomorphisms. Let $\text{Fun}(\Lambda,\mathcal{A})$ denote the category of functors $\Lambda \to \mathcal{A}$ and let $W\mathcal{A}^\text{pre} \subset \text{Fun}(\Lambda,\mathcal{A})$ be the full subcategory of finite ascending $\Lambda$-indexed filtrations, whose objects we indicate as pairs $(M = \text{colim}~W \in \mathcal{A},W)$. Our starting point is the following definition (\ref{def:1}): Let $W\mathcal{A} \subset W\mathcal{A}^\text{pre}$ be the full subcategory of pairs $(M,W)$ with the property that for all $\lambda \in \Lambda$, $\text{gr}^W_\lambda M$ is zero or semistable of slope $\lambda$. We call an object $(M,W)$ of $W\mathcal{A}$ a mixed structure with weight filtration $W$. \\
The basic observation is that when $(M,W), (M',W') \in W\mathcal{A}$ have a single break at different slopes, in which case we refer to them as pure structures, then there are no nozero morphisms $(M,W) \to (M',W')$ in $W\mathcal{A}$. This is enforced by the weight filtrations being ascending and it leads to stronger exactness properties than one generally has for slope filtrations. \\

While the motivating examples are instances of the category $W\mathcal{A}$, at the abstract level, we establish more refined results involving the full subcategory $W\mathcal{A}^\text{pre}_{\text{uds}} \subset W\mathcal{A}^\text{pre}$ of pairs $(M,W)$ with the property that for all $\lambda \in \Lambda$, all nonzero subobjects of $\text{gr}^W_\lambda M$ have slope less than or equal to $\lambda$, and its dual version $W\mathcal{A}^\text{pre}_\text{udq}$ (\ref{def:5}). In \ref{subsec:3.1}, we study the class of morphisms in $W\mathcal{A}^\text{pre}$ whose domain belongs to $W\mathcal{A}^\text{pre}_\text{udq}$ and whose codomain is in $W\mathcal{A}^\text{pre}_\text{uds}$. \\
The main results are \ref{cor:1} and \ref{cor:2}. They assert:

\begin{thmA}
\namedlabel{thm:A}{Theorem A}
Let $f: M \to N$ be a morphism in $W\mathcal{A}^\text{pre}$ with $M \in W\mathcal{A}^\text{pre}_\text{udq}$ and $N \in W\mathcal{A}^\text{pre}_\text{uds}$.
\begin{itemize}
\item[(1)] $f$ is strict in $W\mathcal{A}^\text{pre}$, that is, the canonical morphism $\text{Coim}f \to \text{Im}f$ in $W\mathcal{A}^\text{pre}$ is an isomorphism.
\item[(2)] The cokernel of $f$ in $W\mathcal{A}^\text{pre}$ is computed in $\text{Fun}(\Lambda,\mathcal{A})$ and it lies in $W\mathcal{A}^\text{pre}_\text{uds}$. Dually, $\text{Ker}f \in W\mathcal{A}^\text{pre}_\text{udq}$.
\end{itemize}
\end{thmA}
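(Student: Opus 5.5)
The plan is to reduce everything to the graded pieces via induction on the length of the filtrations, using the basic slope inequality. For $X\in\mathcal{A}$ nonzero, $\mu$ is additive on short exact sequences (weighted by rank), and a nonzero morphism $Q\to S$ with all nonzero quotients of $Q$ of slope $\geq\lambda$ and all nonzero subobjects of $S$ of slope $\leq\lambda'$ forces $\lambda\le\lambda'$. The key ingredient is the strict-increase axiom: for an epi-monic $u:X\to Y$ that is not an isomorphism, $\mu(X)<\mu(Y)$.

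\textbf{Step 1 (pieces).} For $\lambda\in\Lambda$, consider $\text{gr}^W_\lambda f:\text{gr}_\lambda M\to \text{gr}_\lambda N$. Here $\text{gr}_\lambda M$ is udq at $\lambda$ and $\text{gr}_\lambda N$ is uds at $\lambda$. Factor this map as $\text{gr}_\lambda M\to\text{Coim}\to\text{Im}\to\text{gr}_\lambda N$. The coimage is a quotient, so it has slope $\geq\lambda$, and the image is a subobject, so it has slope $\le\lambda$. The canonical map $\text{Coim}\to\text{Im}$ is epi-monic (quasi-abelian), hence by strict increase it is an isomorphism with slope exactly $\lambda$. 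So each graded map is strict. Its cokernel $C_\lambda$ has all nonzero subobjects of slope $\le\lambda$. To see this, take $S\subset C_\lambda$ with preimage $\tilde S\subset \text{gr}_\lambda N$; then $0\to\text{Im}\to\tilde S\to S\to 0$ is exact, $\mu(\tilde S)\le\lambda$ and $\mu(\text{Im})=\lambda$ (or $\text{Im}=0$), so additivity gives $\mu(S)\le\lambda$. The kernel side is dual.

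\textbf{Step 2 (off-diagonal vanishing).} For $\lambda<\lambda'$ I show that every morphism from a subquotient-type object of $\text{gr}_{\lambda'}M$ to $\text{gr}_\lambda N$ vanishes. This follows directly from the inequality above: its image would have slope both $\ge\lambda'$ and $\le\lambda$. Consequently $f$ maps $W_\lambda M$ into $W_\lambda N$ compatibly, and more importantly $f(W_\lambda M)=f(M)\cap W_\lambda N$ in the appropriate sense.

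\textbf{Step 3 (strictness and cokernel).} I induct on the number of breaks of $M$ and $N$, using the exact sequences $0\to W_{<\lambda}\to W_\lambda\to\text{gr}_\lambda\to0$ and a $3\times 3$/snake-type argument in the quasi-abelian category $\mathcal{A}$. The aim is to show that the pointwise cokernel $\lambda\mapsto\text{Coker}(W_\lambda M\to W_\lambda N)$ has monic transition maps, so that it is a filtration computing the cokernel in $W\mathcal{A}^\text{pre}$, and that its graded pieces are the $C_\lambda$ from Step 1, which places it in $W\mathcal{A}^\text{pre}_\text{uds}$. Monicity of the transition maps amounts to $W_{\lambda}N\cap f(W_{\lambda'}M)\subset f(W_\lambda M)$ for $\lambda<\lambda'$, and this is where Step 2 enters: the obstruction is a morphism from a quotient of $\text{gr}_{\lambda'}M$-type pieces into $W_\lambda N$, which vanishes. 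Strictness of $f$ in $W\mathcal{A}^\text{pre}$ then follows from strictness on each graded piece together with the five lemma for strict morphisms. Compatibility of the induced filtrations on $\text{Coim}$ and $\text{Im}$ is checked levelwise, and kernels are obtained by the dual argument.

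\textbf{Main obstacle.} I expect the hardest part to be Step 3, because the diagram chasing has to be carried out in a quasi-abelian rather than an abelian category. Exactness of snake-lemma sequences requires strict morphisms, so I first need to establish the strictness of the levelwise maps $W_\lambda f$ inductively. To avoid working with elements, I would first try embedding $\mathcal{A}$ into its left abelian envelope (Schneiders), where these chases are legitimate, and then pull the conclusions back.
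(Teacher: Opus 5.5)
Your Step 1 is correct. It is the same $(\dagger)$-argument the paper applies to graded pieces (compare \ref{lemma:8} and \ref{lemma:3}).

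The gap is the passage from graded pieces to the filtration itself. Strict compatibility, $f(W_{\leq\lambda}M)=f(M)\cap W_{\leq\lambda}N$, is the whole content of \ref{prop:2}. You present it in Step 2 as a ``consequence'' of graded vanishing and in Step 3 only as an aim. It does not follow, and your description of the obstruction is wrong at both ends.

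\emph{The source.} Step 2 fails for subobjects: a strict subobject of $\text{gr}^W_{\lambda'}M$ may have quotients of small slope. For instance, in $\mathcal{B}_2$ a line in a rank-two weight-one Hodge structure, other than $H^{1,0}$ and $H^{0,1}$, has slope $0$. What actually occurs is $K=\text{Ker}(\text{gr}^W_{\lambda'}f)$. Its quotients have slope $\geq\lambda'$ only because $\text{gr}^W_{\lambda'}f$ is strict with coimage of slope exactly $\lambda'$.

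\emph{The target.} For $\lambda'$ the top break, the map you must kill is the connecting map $K\to L=\text{Coker}(W_{<\lambda'}M\to W_{<\lambda'}N)$. It is well defined only after dividing by $f(W_{<\lambda'}M)$, so it is not a map into $W_{<\lambda'}N$ or into any $\text{gr}^W_\lambda N$. Subobjects of a quotient of $W_{<\lambda'}N$ are not controlled by the hypothesis on $N$. The bound ``all nonzero subobjects of $L$ have slope $<\lambda'$'' comes only from knowing three things: $L$ is computed levelwise, $L$ lies in $\mathcal{A}$, and $L$ is an iterated extension of the $C_\lambda$ with $\lambda<\lambda'$. Then the extension argument of \ref{lemma:7} applies. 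That statement has to be the \emph{inductive hypothesis}. Your induction is announced without it, so nothing feeds the vanishing argument.

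Two further steps fail as written.
\begin{itemize}
\item[(a)] In a quasi-abelian category, monic transition maps do not make $\lambda\mapsto L_{\leq\lambda}=\text{Coker}(W_{\leq\lambda}f)$ an object of $W\mathcal{A}^{\text{pre}}$; you need strict monics. This costs either a second use of $(\dagger)$ (the degree comparison in \ref{lemma:3}), or, working in $\mathcal{L}_\mathcal{A}$, extension-closedness of $\mathcal{A}$ applied to $0\to L_{<\lambda}\to L_{\leq\lambda}\to C_\lambda\to 0$. Exactness of that sequence is again the vanishing of the connecting map above.
\item[(b)] Strictness of each $\text{gr}^W_\lambda f$ plus a five lemma does not by itself give strictness of $f$. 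Take the identity of $M\neq 0$ from the filtration with a single break at $1$ to the one with a single break at $0$: its graded maps are strict, yet it is a non-invertible epi-monic. What works is \ref{cor:1}. Strict compatibility and co-compatibility make $\text{gr}(\text{Coim}f\to\text{Im}f)$ epi-monic, and the udq/uds bounds together with $(\dagger)$ make it an isomorphism.
\end{itemize}

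There are two ways to repair the argument. One is to follow the paper: induct on the breaks of $M$, peel off the lowest break $\mu$, and show $f(W_{\leq\mu}M)\cap W_{<\mu}N=0$ by a slope estimate using \ref{lemma:7}. Then deduce via $(\dagger)$ that $f(W_{\leq\mu}M)\to\text{gr}^W_\mu N$ is strict monic, so $N/f(W_{\leq\mu}M)$ stays in $W\mathcal{A}^{\text{pre}}_{\text{uds}}$ by \ref{lemma:8}, and recurse on $M/W_{\leq\mu}M\to N/f(W_{\leq\mu}M)$. The other is to make your $\mathcal{L}_\mathcal{A}$ plan precise as in the Simpson-style alternative the paper sketches. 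That argument works at the top break, using exactly the hypothesis on $\text{Coker}(f_{<\lambda})$ described above.
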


The key technical step on the way to these results is the demonstration that any such $f$ is strictly compatible and strictly co-compatible with the filtrations, which we do in \ref{prop:2}. Strict compatibility and strict co-compatibility with the filtrations, which are weaker notions than strictness in $W\mathcal{A}^\text{pre}$, are the content of \ref{def:3}. \\
\ref{thm:A} specialises to statements about the category $W\mathcal{A}$ (\ref{thm:1}):

\begin{thmB}
\namedlabel{thm:B}{Theorem B}
$\text{}$
\begin{itemize}
\item[(1)] The category $W\mathcal{A}$ is abelian.
\item[(2)] The inclusion $W\mathcal{A} \hookrightarrow \text{Fun}(\Lambda,\mathcal{A})$ is (strongly) exact.
\end{itemize}
\end{thmB}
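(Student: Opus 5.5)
We derive Theorem B from Theorem A, using only that $W\mathcal{A}$ sits inside both $W\mathcal{A}^\text{pre}_\text{uds}$ and $W\mathcal{A}^\text{pre}_\text{udq}$. Indeed, if $\text{gr}^W_\lambda M$ is zero or semistable of slope $\lambda$, then every nonzero subobject has slope $\le \lambda$ and every nonzero quotient has slope $\ge \lambda$. Hence any morphism $f: M \to N$ in $W\mathcal{A}$ satisfies the hypotheses of Theorem A.

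\textbf{Step 1: kernels and cokernels stay in $W\mathcal{A}$.} By Theorem A(2), $\text{Coker} f$ is computed in $\text{Fun}(\Lambda,\mathcal{A})$ and lies in $W\mathcal{A}^\text{pre}_\text{uds}$, and dually $\text{Ker} f$ lies in $W\mathcal{A}^\text{pre}_\text{udq}$. I must show that $\text{Coker} f$ also lies in $W\mathcal{A}^\text{pre}_\text{udq}$, and symmetrically for $\text{Ker} f$. Then the graded pieces are both ``upper-semistable'' and ``lower-semistable'' of slope $\lambda$, which makes them semistable of slope $\lambda$, using André's characterisation of semistability by subobject slopes together with $\mu$ of the whole object.

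For this I use the strict compatibility of Proposition \ref{prop:2}: it gives $\text{gr}^W_\lambda \text{Coker} f \cong \text{Coker}(\text{gr}^W_\lambda f)$. This is a quotient of $\text{gr}^W_\lambda N$, which is semistable of slope $\lambda$. Every nonzero quotient of it is therefore a quotient of $\text{gr}^W_\lambda N$ and has slope $\ge \lambda$. The kernel is handled dually.

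One point needs care. The slope of $\text{gr}^W_\lambda\text{Coker} f$ itself must equal $\lambda$, not merely be bounded on both sides. This follows because its subobjects have slope $\le\lambda$ and its quotients have slope $\ge\lambda$: the object is both a subobject and a quotient of itself, so its slope is exactly $\lambda$. Hence $\text{Ker} f, \text{Coker} f \in W\mathcal{A}$. Since $W\mathcal{A}$ is a full subcategory and these are the kernel and cokernel in $W\mathcal{A}^\text{pre}$, they are also the kernel and cokernel in $W\mathcal{A}$.

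\textbf{Step 2: abelianness.} $W\mathcal{A}$ is additive, since it is closed under finite direct sums: $\text{gr}$ commutes with $\oplus$, and direct sums of semistables of the same slope are semistable. By Step 1 it has kernels and cokernels, and these agree with those in $W\mathcal{A}^\text{pre}$. Hence $\text{Im} f$ and $\text{Coim} f$ computed in $W\mathcal{A}$ agree with those in $W\mathcal{A}^\text{pre}$, and Theorem A(1) shows that $\text{Coim} f \to \text{Im} f$ is an isomorphism. So $W\mathcal{A}$ is abelian.

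\textbf{Step 3: exactness of the inclusion.} By Theorem A(2), cokernels in $W\mathcal{A}$ are computed in $\text{Fun}(\Lambda,\mathcal{A})$. The dual half of Theorem A(2), or the dual argument, shows the same for kernels. Because kernels and cokernels are preserved, images are preserved as well, so the inclusion sends exact sequences to (strongly) exact sequences in the quasi-abelian category $\text{Fun}(\Lambda,\mathcal{A})$.

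The main obstacle is Step 1: showing that $\text{gr}$ of the cokernel is genuinely semistable of slope $\lambda$, rather than only lying in $W\mathcal{A}^\text{pre}_\text{uds}$. This relies on $\text{gr}$ commuting with $\text{Coker}$, which is supplied by Proposition \ref{prop:2}, and on the slope axioms for quotients of semistable objects.
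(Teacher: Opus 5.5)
Your proof is correct and follows the paper's route. Morphisms of $W\mathcal{A}$ satisfy the hypotheses of \ref{thm:A} because $W\mathcal{A} = W\mathcal{A}^\text{pre}_\text{uds} \cap W\mathcal{A}^\text{pre}_\text{udq}$. Your Step 1 is exactly \ref{lemma:3}(3) and the closing clauses of \ref{cor:2}, and strictness together with the pointwise computation then gives (1) and (2).

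Two justifications need repair, although the facts themselves hold:
\begin{itemize}
\item[(a)] Kernels in $W\mathcal{A}^\text{pre}$ are computed in $\text{Fun}(\Lambda,\mathcal{A})$ simply because $W_{\leq\lambda}\text{Ker}f = \text{Ker}_\mathcal{A}f \cap W_{\leq\lambda}M = \text{Ker}(W_{\leq\lambda}f)$. This does not come from duality: the equivalence of \ref{lemma:1} does not extend to the functor category, as the paper stresses.
\item[(b)] The isomorphism $\text{gr}^W_\lambda\text{Coker}f \cong \text{Coker}(\text{gr}^W_\lambda f)$ comes from the cokernel being computed pointwise (\ref{thm:A}(2)). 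Strict compatibility alone does not give it in a quasi-abelian $\mathcal{A}$; the strict monic in \ref{rmk:4}(1) is strictly compatible yet violates it.
\end{itemize}
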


As we demonstrate in \ref{sec:4}, \ref{thm:B} comprises the respective known results for mixed Hodge structures \cite[Théorème 1.2.10, Théorème 2.3.5]{ARTICLE:2}, mixed twistor structures \cite[Lemma 1.3]{MISC:3} and mixed monodromy-weight structures (see \ref{rmk:3}(2) for a brief survey of the literature).  \\

When $\mathcal{A}$ is an arbitrary quasi-abelian category, it is necessary to distinguish between strict compatibility and strict co-compatibility of morphisms in $W\mathcal{A}^\text{pre}$ with the filtrations. In the appendix, we analyse under which condition on $\mathcal{A}$ this distinction becomes obsolete. The same condition, for which we borrow from lattice theory the name of modular law, turns out to determine whether the category $W\mathcal{A}^\text{pre}$ is again quasi-abelian. \\
Further material in the appendix includes a discussion of the case that $\mathcal{A}$ comes equipped with a tensor product.

\paragraph{Acknowledgements} My thanks go to my advisor Moritz Kerz. The idea for the project is due to him. I greatly appreciate his continuous feedback, which has led to simplifications of many of my original proofs, and his suggestions for improving the presentation. \\
I thank Tony Scholl for providing me with a copy of his notes for his talk \cite{MISC:5}. \\
The author was supported by the CRC 1085 \enquote{Higher Invariants} funded by the Deutsche Forschungsgemeinschaft (DFG).

\section{Preliminaries}
\namedlabel{sec:2}{Section \thesection}

\begin{notation}
We let $\mathcal{A}^\circ$ denote the opposite category of a category $\mathcal{A}$. The same notation is used on objects, morphisms and functors.
\end{notation}

\begin{intermediate}
For our purposes, assuming categories to be essentially small will not be a restriction. This assumption will be met in all examples of interest.
\end{intermediate}

\begin{notation}
Let $\mathcal{A}$ be an additive category with kernels and cokernels. We write $\text{ker}f$ for the kernel of a morphism $f$ in $\mathcal{A}$ and $\text{Ker}f$ for the domain of $\text{ker}f$. Analogous notation is used for cokernels. A sequence $0 \to M \stackrel{f}{\to} N \stackrel{g}{\to} P \to 0$ in $\mathcal{A}$ is said to be a \textit{short exact sequence} if $f = \text{ker}g$ and $g = \text{coker}f$. \\
Every morphism $f: M \to N$ in $\mathcal{A}$ admits a canonical factorisation $M \to \text{Coim}f \to \text{Im}f \to N$, where $\text{Coim}f = \text{Coker}\!\left(\text{ker}f\right)$ and $\text{Im}f = \text{Ker}\!\left(\text{coker}f\right)$. We say that $f$ is \textit{strict} if $\text{Coim}f \to \text{Im}f$ is an isomorphism. The kernels are precisely the strict morphisms which are monic, the \textit{strict monics} for short, and the cokernels are precisely the \textit{strict epis}. We will refer to monics and epis as \textit{subobjects} and \textit{quotients} respectively when there is particular emphasis on the codomain respectively domain. \\
With that, for strict subobjects $i_A: A \hookrightarrow X$ and $i_B: B \hookrightarrow X$ in $\mathcal{A}$, we put $X/A \defeq \text{Coker}\!\left(i_A\right)$ and $A \cap B \defeq \text{Ker}\!\left(X \to X/A \oplus X/B\right)$. Then $A \cap B$ is a pullback of the cospan formed by $i_A$ and $i_B$, so $A \cap B \cong \text{Ker}\!\left(\left(i_A,-i_B\right): A \oplus B \to X\right) \cong \text{Ker}\!\left(\left(i_A,i_B\right)\right)$. We set $A + B \defeq \text{Im}\!\left(\left(i_A,i_B\right)\right)$. For a morphism $f: M \to N$ in $\mathcal{A}$ and strict subobjects $P \hookrightarrow M$ and $Q \hookrightarrow N$, we write $f(P) \defeq \text{Im}(P \hookrightarrow M \stackrel{f}{\to} N)$ and let $f^{-1}(Q)$ denote the pullback of $Q \hookrightarrow N$ along $f$.
\end{notation}

\begin{definition}
A \textit{quasi-abelian category} \cite[Section 1.1]{ARTICLE:6}, \cite[Section 1.2.7]{ARTICLE:1} is an additive category with kernels and cokernels such that kernels are stable under pushout and cokernels are stable under pullback. \\
Equivalently, $\mathcal{A}$ is an additive category which occurs as part of a cotilting torsion pair $\left(\mathcal{TL}_\mathcal{A},\mathcal{A}\right)$ in an abelian category $\mathcal{L}_\mathcal{A}$, the \textit{left abelian envelope} of $\mathcal{A}$ \cite[Section 1.2]{ARTICLE:6}, \cite[Section 5.4, Proposition B.3]{MISC:2}. $\mathcal{L}_\mathcal{A}$ and $\mathcal{TL}_\mathcal{A}$ are determined by $\mathcal{A}$. Likewise, $\mathcal{A}$ is part of a tilting torsion pair $\left(\mathcal{A},\mathcal{TR}_\mathcal{A}\right)$ in an abelian category $\mathcal{R}_\mathcal{A}$, the \textit{right abelian envelope} of $\mathcal{A}$. Also $\mathcal{A}^\circ$ is quasi-abelian, and one has $\mathcal{R}_\mathcal{A} \simeq \left(\mathcal{L}_{\mathcal{A}^\circ}\right)^\circ$ as well as $\mathcal{TR}_\mathcal{A} \simeq \left(\mathcal{TL}_{\mathcal{A}^\circ}\right)^\circ$.
\end{definition}

\begin{intermediate} 
These axioms make for a certain amount of homological algebra in $\mathcal{A}$, their upshot being that $\text{Ext}^1$ is bifunctorial on $\mathcal{A}$. Indeed, one can regard $\mathcal{A}$ as an exact category with all kernels and cokernels and equipped with the maximal exact structure, that is the admissible monics are the kernels and the admissible epis are the cokernels.
\end{intermediate}

\begin{remark}
\namedlabel{rmk:6}{Remark \thesubsection}
Let $\mathcal{A}$ be a quasi-abelian category. We will use the following properties (and may do so without repeated reference).
\begin{itemize}
\item[(1)] If $A \hookrightarrow B$ and $B \hookrightarrow C$ are strict subobjects in $\mathcal{A}$, then so is their composite $A \hookrightarrow C$ and the sequence
\begin{equation*}
0 \to \frac{B}{A} \to \frac{C}{A} \to \frac{C}{B} \to 0
\end{equation*}
is exact \cite[Proposition 1.1.7]{ARTICLE:6}, \cite[Lemma 1.2.5(2)]{ARTICLE:1}. \\
On the other hand, it follows immediately from the axioms of a quasi-abelian category that for a strict subobject $X \hookrightarrow C/A$ and $Y \defeq (C \twoheadrightarrow C/A)^{-1}(X)$, the resulting sequence $0 \to A \to Y \to X \to 0$ is exact. \\
Thus one can speak of strict subquotients without ambiguity, and moreover of their intersection and sum: For any other strict subobject $B' \hookrightarrow C$ such that $A \hookrightarrow C$ factors through $B' \hookrightarrow C$, it holds that
\begin{equation*}
\frac{B \cap B'}{A} \hookrightarrow \frac{B}{A} \cap \frac{B'}{A}, \quad \frac{B + B'}{A} \hookrightarrow \frac{B}{A} + \frac{B'}{A}
\end{equation*}
are isomorphisms of strict subobjects of $C/A$.
\item[(2)] For any morphism $f$ in $\mathcal{A}$, the canonical map $\text{Coim}f \to \text{Im}f$ is epi-monic \cite[Corollary 1.1.5]{ARTICLE:6}, \cite[Lemma 1.2.5(4)]{ARTICLE:1}. \\
Recall that as an additive category with kernels and cokernels, $\mathcal{A}$ is abelian if and only if this map is an isomorphism for all $f$, in other words if and only if all morphisms in $\mathcal{A}$ are strict.
\item[(3)] If $f$ and $g$ are composable morphisms in $\mathcal{A}$ such that $gf$ is strict monic, then so is $f$ (the first morphism in the composition). Dually, if a composite morphism is strict epi, so is the second morphism in the composition \cite[Proposition 1.1.8]{ARTICLE:6}.
\item[(4)] $\mathcal{A}$ is closed under subobjects in its left abelian envelope $\mathcal{L}_\mathcal{A}$ \cite[5.4]{MISC:2}, \cite[Proposition 1.2.14]{ARTICLE:1} and, consequently, a short sequence in $\mathcal{A}$ is exact in $\mathcal{A}$ if and only if it is exact in $\mathcal{L}_\mathcal{A}$. The resulting morphism $K_0(\mathcal{A}) \to K_0\!\left(\mathcal{L}_\mathcal{A}\right)$ is an isomorphism. Here, $K_0(\mathcal{A})$ denotes the Grothendieck group of $\mathcal{A}$, built from the free group on the set of isomorphism classes $\text{sk}(\mathcal{A})$ of $\mathcal{A}$ by taking short exact sequences as relations.  \\
Dually, $\mathcal{A}$ is closed under quotients in its right abelian envelope $\mathcal{R}_\mathcal{A}$ and a short sequence in $\mathcal{A}$ is exact in $\mathcal{A}$ if and only if it is exact in $\mathcal{R}_\mathcal{A}$. \\
This implies that kernels of all morphisms and cokernels of strict morphisms in $\mathcal{A}$ are computed in $\mathcal{L}_\mathcal{A}$, while kernels of strict morphisms and cokernels of all morphisms in $\mathcal{A}$ are computed in $\mathcal{R}_\mathcal{A}$. \\
Furthermore, $\mathcal{A}$ is closed under extensions both in $\mathcal{L}_\mathcal{A}$ and in $\mathcal{R}_\mathcal{A}$.
\end{itemize}
\end{remark}

\begin{definition}
\namedlabel{def:6}{Definition \thesubsection}
Let $\mathcal{A}$ be a quasi-abelian category and $\Lambda$ a totally ordered, uniquely divisible abelian group. We mainly follow \cite[Sections 1.2.6, 1.3-1.4.2]{ARTICLE:1}.
\begin{itemize}
\item[(1)] An \textit{ascending $\Lambda$-indexed filtration on $M \in \mathcal{A}$} is a functor $W_{\leq(-)}M: \Lambda \to \mathcal{A}$ with the following properties: (Exhaustiveness:) It admits a colimit cone with vertex $M$ such that $W_{\leq\lambda}M \to M$ is a strict subobject for all $\lambda \in \Lambda$. (Separatedness and right continuity:) The following limits exist and are given as indicated: $\lim W_{\leq(-)}M = 0$ and for all $\lambda \in \Lambda$, $\lim_{\lambda'>\lambda} W_{\leq\lambda'}M = W_{\leq\lambda}M$, with the obvious cone maps. \\
Define a \textit{descending $\Lambda$-indexed filtration on $M$}, $F^{\geq(-)}M: \Lambda^\circ \to \mathcal{A}$, to be an ascending $\Lambda^\circ$-indexed filtration on $M$. Thereby, right continuity gets replaced by left continuity, that is one requires $\lim_{\lambda'<\lambda} F^{\geq\lambda'}M = F^{\geq\lambda}M$ for all $\lambda \in \Lambda$.
\item[(2)] A \textit{rank function} on $\mathcal{A}$ is an assignment $\text{rk}: \text{sk}(\mathcal{A}) \to \mathbb{N}$ which is additive along short exact sequences, in other words extends to a group homomorphism $K_0(\mathcal{A}) \to \mathbb{Z}$, and such that $\text{rk}(M) \neq 0$ holds for $M \in \text{sk}(\mathcal{A})\backslash\{0\}$.
\end{itemize}
If $\mathcal{A}$ admits a rank function, then every flag, that is strictly ascending chain of strict subobjects, on an object of $\mathcal{A}$ is finite, and conversely, if the maximal length of a flag on any object is finite, it defines a rank function on $\mathcal{A}$ (apply \cite[Remark 1.2.10]{ARTICLE:1} to the left abelian envelope). \\

In what follows, assume $\mathcal{A}$ to be equipped with a rank function. In this situation, any ascending filtration $W_{\leq(-)}M$ on an object $M \in \mathcal{A}$ in the sense of item (1) is \textit{finite}, that is isomorphic to a finite flag $0 \hookrightarrow W_{\leq\lambda_0}M \hookrightarrow W_{\leq\lambda_1}M \hookrightarrow ... \hookrightarrow W_{\leq\lambda_r}M = M$ with $\lambda_0 < \lambda_1 < ... < \lambda_r$, called the \textit{breaks} of the filtration, namely such that $W_{\leq(-)}M$ is isomorphic to a constant functor precisely on the intervals $\lambda_{i-1} \leq \lambda < \lambda_i$. It is then clear that the colimit $W_{<\lambda}M \defeq \text{colim}_{\lambda'<\lambda} W_{\leq\lambda'}M$ exists for all $\lambda$ and that the canonical morphism $W_{<\lambda}M \to W_{\leq\lambda}M$ is strict monic. We let $\text{gr}^W_\lambda M$ denote its cokernel. \\
Analogously, we have $F^{>\lambda}M \defeq \text{colim}_{\lambda'>\lambda} F^{\geq\lambda'}M$ and $\text{gr}^\lambda_F M \defeq \text{Coker}\!\left(F^{>\lambda}M \hookrightarrow F^{\geq\lambda}M\right)$ for a descending filtration $F^{\geq(-)}M$ on $M$.
\begin{itemize}
\item[(3)] A \textit{degree function} on $\mathcal{A}$ is an assignment $\text{deg}: \text{sk}(\mathcal{A}) \to \Lambda$ which is additive along short exact sequences, equivalently factors through a homomorphism $K_0(\mathcal{A}) \to \Lambda$. We call $\mu \defeq \text{deg}/\text{rk}: \text{sk}(\mathcal{A})\backslash\{0\} \to \Lambda$ a \textit{$\Lambda$-valued slope function} if it satisfies:
\begin{equation}
\setlength{\jot}{0pt}
\label{eq:dagger}
\tag{$\dagger$}
\begin{split}
&\text{Whenever there is an epi-monic } \varphi: M \to N \text{ in } \mathcal{A}, \text{ it holds that } (\dagger')~\mu(M) \leq \mu(N), \\
&\text{with equality if and only if } \varphi \text{ is an isomorphism.}
\end{split}
\end{equation}
\end{itemize}
For our purposes, we impose a stronger condition than in \cite[Definition 1.3.1]{ARTICLE:1}, where only $(\dagger')$ is required. It is strictly stronger: While $\mu = 0$ is a slope function in the sense of \cite{ARTICLE:1}, it is a slope function in our sense only if $\mathcal{A}$ is abelian. \\
It follows from \cite[Lemma 1.2.17]{ARTICLE:1} that in the case the rank of the torsion classes, that is the image of $\text{sk}\!\left(\mathcal{TL}_\mathcal{A}\right)$ under $\text{rk}: K_0\!\left(\mathcal{L}_\mathcal{A}\right) \cong K_0(\mathcal{A}) \to \mathbb{Z}$, is zero, condition \eqref{eq:dagger} means that $\text{deg}: K_0\!\left(\mathcal{L}_\mathcal{A}\right) \cong K_0(\mathcal{A}) \to \Lambda$ is positive on $\text{sk}\!\left(\mathcal{TL}_\mathcal{A}\right)\backslash\{0\}$, see also \cite[Corollary 1.4.10]{ARTICLE:1}.
\begin{itemize}
\item[(4)] Suppose $\mu$ is a slope function on $\mathcal{A}$. A nonzero object $M \in \mathcal{A}$ is called \textit{semistable} with respect to $\mu$ if for all nonzero (strict) subobjects $N \to M$, it holds that $\mu(N) \leq \mu(M)$. Equivalently \cite[Lemma 1.3.7(2)]{ARTICLE:1}, $M$ is semistable if it admits no quotients of strictly smaller slope. \\
For all $\lambda \in \Lambda$, the full subcategory $\mathcal{A}(\lambda) \subset \mathcal{A}$ consisting of zero and the semistable objects of slope $\lambda$ is an abelian category. Indeed, it is closed under biproducts, kernels and cokernels in $\mathcal{A}$ \cite[Lemma 1.3.9(1)]{ARTICLE:1} and, consequently, every epi-monic in $\mathcal{A}(\lambda)$ is epi-monic in $\mathcal{A}$, hence an isomorphism due to \eqref{eq:dagger}. \\
Any nonzero object $M \in \mathcal{A}$ admits a \textit{universal destabilising subobject} $U \hookrightarrow M$ which is characterised by the properties that $U$ is semistable and that for any nonzero strict subobject $N \hookrightarrow M$, either $\mu(N) < \mu(U)$ or $N \hookrightarrow M$ factors through $U \hookrightarrow M$. Observe that in any case, it holds that $\mu(N) \leq \mu(U)$. \\
The \textit{slope filtration} corresponding to the slope function $\mu$ under \cite[Theorem 1.4.7]{ARTICLE:1} is $F_\mu^{\geq(-)}: \Lambda^\circ \times \mathcal{A} \to \mathcal{A}$ such that for all $M \in \mathcal{A}$, $F_\mu^{\geq(-)}M$ is the unique (finite) descending $\Lambda$-indexed filtration on $M$ with the property that $\text{gr}^\lambda_{F_\mu}M \in \mathcal{A}(\lambda)$ holds for all $\lambda \in \Lambda$. Explicitly, if $U_0 \hookrightarrow M$ is the universal destabilising subobject, then the highest break is $\lambda_0 \defeq \mu\!\left(U_0\right)$ and $F_\mu^{\geq\lambda_0}M = U_0$, $\lambda_1$ is the slope of the universal destabilising subobject $U_1$ of $M/U_0$ and $F_\mu^{\geq\lambda_1}M = \left(M \to M/U_0\right)^{-1}\!\left(U_1\right)$, and so on.
\end{itemize}
\end{definition}

\begin{intermediate}
The non-example of a slope function in our sense, $\mu = 0$, shows that condition \eqref{eq:dagger} is necessary to ensure that morphisms between semistable objects of the same slope are strict. We will need this condition in \ref{sec:3}. In fact, \ref{thm:1}(1) does not extend to our non-example, for trivial reasons. \\

Fix a quasi-abelian category $\mathcal{A}$ and a totally ordered, uniquely divisible abelian group $\Lambda$. We now introduce the main objects of study.
\end{intermediate}

\begin{definition}
\namedlabel{def:1}{Definition \thesubsection}
$\text{}$
\begin{itemize}
\item[(1)] Let $\text{Fun}(\Lambda,\mathcal{A})$ denote the category of functors $\Lambda \to \mathcal{A}$, write $W\mathcal{A}^\text{pre,all} \subset \text{Fun}(\Lambda,\mathcal{A})$ for the full subcategory of ascending $\Lambda$-indexed filtrations (\ref{def:6}(1)) and further let $W\mathcal{A}^\text{pre} \subset W\mathcal{A}^\text{pre,all}$ be the full subcategory of finite filtrations. We indicate objects of $W\mathcal{A}^\text{pre,all}$ and $W\mathcal{A}^\text{pre}$ as pairs $(M,W)$, where $W$ is a filtration on $M \in \mathcal{A}$.
\end{itemize}
As remarked before, when $\mathcal{A}$ admits a rank function, there is no difference between $W\mathcal{A}^\text{pre}$ and $W\mathcal{A}^\text{pre,all}$. \\
From here on (and up to the appendix), assume in addition that $\mathcal{A}$ is equipped with a rank function and a $\Lambda$-valued slope function $\mu$. Semistability will always be with respect to $\mu$.
\begin{itemize}
\item[(2)] Let $W\mathcal{A} \subset W\mathcal{A}^\text{pre}$ be the full subcategory of objects $(M,W)$ with the property that $\text{gr}^W_\lambda M \in \mathcal{A}(\lambda)$ (\ref{def:6}(4)) holds for all $\lambda \in \Lambda$. Call objects $(M,W)$ of $W\mathcal{A}$ \textit{mixed structures} with \textit{weight filtration} $W$.
\end{itemize}
\end{definition}

\begin{intermediate}
If $f$ is a morphism in $W\mathcal{A}^\text{pre}$, we refer to the morphism induced by $f$ on the colimits of the filtrations as the \textit{underlying morphism} of $f$ in $\mathcal{A}$. When there is no risk of confusion between a morphism in $W\mathcal{A}^\text{pre}$ and its underlying morphism in $\mathcal{A}$, we may omit the filtrations from the notation. \\

Along with $\mathcal{A}$, the category $\text{Fun}(\Lambda,\mathcal{A})$ is quasi-abelian. The category $W\mathcal{A}^\text{pre}$ is at least additive and has all kernels and cokernels, which we check in \ref{prop:1} of the appendix. There, we also show that $W\mathcal{A}^\text{pre}$ is a quasi-abelian category precisely when the modular law (\ref{def:2}), a feature of abelian categories, holds in $\mathcal{A}$. In contrast, the category $W\mathcal{A}^\text{pre,all}$ is only additive in general. \\
One of the main results of the subsequent section is that the category $W\mathcal{A}$ is always abelian (\ref{thm:1}).
\end{intermediate}

\begin{example}
\namedlabel{ex:1}{Example \thesubsection}
For illustration, we anticipate \ref{ex:5}. Let $\mathcal{B}$ be an abelian category with a rank function $\text{rk}_\mathcal{B}$ and let $\mathcal{B}_2$ denote the quasi-abelian category of triples $(M,F,\bar{F})$, where $F$ and $\bar{F}$ are descending $\Lambda$-indexed filtrations on $M \in \mathcal{B}$, and whose morphisms are the morphisms in $\mathcal{B}$ respecting the filtrations. For $0 \neq \left(M,F,\bar{F}\right) \in \mathcal{B}_2$, put $\text{rk}\!\left(M,F,\bar{F}\right) \defeq \text{rk}_{\mathcal{B}}(M)$ and $\text{deg}\!\left(M,F,\bar{F}\right) \defeq \sum_{\bar{\nu},\nu \in \Lambda} \text{rk}_{\mathcal{B}}\!\left(\text{gr}^{\bar{\nu}}_{\bar{F}}\text{gr}^\nu_FM\right)\left(\bar{\nu}+\nu\right)$. Then deg/rk is a slope function on $\mathcal{B}_2$ such that for $\lambda \in \Lambda$, $\mathcal{B}_2(\lambda)$ consists of those objects $\left(M,F,\bar{F}\right) \in \mathcal{B}_2$ for which $\text{gr}^{\bar{\nu}}_{\bar{F}}\text{gr}^\nu_FM = 0$ holds whenever $\bar{\nu} + \nu \neq \lambda$. The category $W\mathcal{B}_2$ is the category of abstract mixed Hodge structures (\ref{def:4}) as considered (though not under this name) in \cite[Théorème 1.2.10]{ARTICLE:2}.
\end{example}

\begin{intermediate}
The guiding idea is the following.
\end{intermediate}

\begin{remark}
The semistable objects of $\mathcal{A}$ are precisely those objects of $\mathcal{A}$ whose slope filtration has a single break, so as the slope filtration is functorial and descending, the slope cannot strictly decrease along nonzero morphisms of semistable objects (compare \cite[Lemma 1.3.8]{ARTICLE:1}). However already in \ref{ex:1}, there are nonzero morphisms between semistable objects along which the slope strictly increases (\ref{rmk:4}(1), \ref{rmk:2}(1)), which indicates a failure of exactness of the slope filtration (\cite[Theorem 1.5.9, (1) $\implies$ (4)]{ARTICLE:1}). \\
In contrast, if we have $(M,W), \left(M',W'\right) \in W\mathcal{A}$ with a single break at different slopes, call them \textit{pure structures}, then $\text{Hom}_{W\mathcal{A}}\!\left((M,W),\left(M',W'\right)\right) = \{0\}$ is enforced by the fact that the weight filtrations are ascending. We will see in \ref{thm:1} that this in turn leads to strong exactness properties of the weight filtration.
\end{remark}

\section{Exactness properties of weight filtrations}
\namedlabel{sec:3}{Section \thesection}

\ref{subsec:3.1} collects the main results of the abstract theory. The application of interest is \ref{thm:1}.

\subsection{Formal aspects}
\namedlabel{subsec:3.1}{Section \thesubsection}

We begin with some preparations, keeping the setup and notation from the previous section. \\

If $\mathcal{A}$ is a quasi-abelian category with a slope function $\mu$, then $-\mu$ is a slope function on the opposite category $\mathcal{A}^\circ$. Considering $\mathcal{A}^\circ$ equipped with the slope function $-\mu$, we have the following duality result.

\begin{sublemma}
\namedlabel{lemma:1}{Lemma \thesubsubsection}
The obvious extension of the assignment $(M,W)^\circ \mapsto \left(M^\circ,\left(\left(M/W_{<-\lambda}M\right)^\circ\right)_\lambda\right)$ to a functor $o_\mathcal{A}: \left(W\mathcal{A}^\text{pre}\right)^\circ \to W\!\left(\mathcal{A}^\circ\right)^\text{pre}$ is an equivalence of categories which restricts to an equivalence between strictly full subcategories $\left(W\mathcal{A}\right)^\circ \simeq W\!\left(\mathcal{A}^\circ\right)$.
\end{sublemma}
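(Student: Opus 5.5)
The plan is to write down the functor explicitly, show that it lands in $W(\mathcal{A}^\circ)^\text{pre}$, construct a quasi-inverse by the same recipe applied to $\mathcal{A}^\circ$, and finally compare graded pieces to handle $W\mathcal{A}$.

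\textbf{The functor on objects.} Let $(M,W) \in W\mathcal{A}^\text{pre}$ have breaks $\lambda_0<\dots<\lambda_r$. I set $W'_{\leq\lambda}M^\circ \defeq (M/W_{<-\lambda}M)^\circ$. A strict epi $M \twoheadrightarrow M/W_{<-\lambda}M$ in $\mathcal{A}$ is a strict subobject $(M/W_{<-\lambda}M)^\circ \hookrightarrow M^\circ$ in $\mathcal{A}^\circ$.
\begin{itemize}
\item \emph{Functoriality.} For $\lambda\le\lambda'$ we have $-\lambda'\le-\lambda$, hence $W_{<-\lambda'}M \hookrightarrow W_{<-\lambda}M$. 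This gives a quotient map $M/W_{<-\lambda'}M \twoheadrightarrow M/W_{<-\lambda}M$, i.e. a morphism $W'_{\leq\lambda} \to W'_{\leq\lambda'}$ in $\mathcal{A}^\circ$. Functoriality in $\lambda$ comes from the universal property of cokernels, using \ref{rmk:6}(1).
\item \emph{Finiteness and right continuity.} The filtration $W'$ is finite, and its breaks are the values $-\lambda_i$. It is locally constant on half-open intervals of the correct type: the use of $W_{<}$ rather than $W_{\leq}$ converts right continuity of $W$ into right continuity of $W'$. Concretely, $\lambda\mapsto W_{<-\lambda}M$ is constant on intervals $-\lambda_i<-\lambda\le-\lambda_{i-1}$, i.e. on $\lambda_{i-1}\le\lambda<\lambda_i$ after negation. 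I will check this carefully, since it is the main bookkeeping point.
\item \emph{Exhaustiveness and separatedness.} For $\lambda \gg 0$ we get $W'_{\leq\lambda}=M^\circ$, and for $\lambda\ll0$ we get $W'_{\leq\lambda}=0$. Since the filtrations are finite, the limit and colimit conditions reduce to these statements.
\end{itemize}

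\textbf{The functor on morphisms.} A morphism $f:(M,W)\to(N,W)$ maps $W_{<\mu}M$ into $W_{<\mu}N$. It therefore induces maps $M/W_{<-\lambda}M \to N/W_{<-\lambda}N$, which are natural in $\lambda$; reversing them gives the morphism in $W(\mathcal{A}^\circ)^\text{pre}$. Faithfulness is clear, because the underlying morphism is recovered at $\lambda\gg0$.

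\textbf{Quasi-inverse.} For essential surjectivity and fullness, I will apply the same construction $o_{\mathcal{A}^\circ}$ and show that $o_{\mathcal{A}^\circ}\circ o_\mathcal{A}^\circ$ is naturally isomorphic to the identity. The composite sends $W_{\leq\lambda}M$ to $\operatorname{Ker}\bigl(M\to M/W'^{\circ}_{<-\lambda}\bigr)$, where $W'_{<-\lambda}=\operatorname{colim}_{\nu<-\lambda}(M/W_{<-\nu}M)^\circ$. In $\mathcal{A}$ this becomes $\lim_{\nu>\lambda}$ of quotients $M/W_{<\nu}M$, which equals $M/W_{\leq\lambda}M$ by finiteness. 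The kernel of $M\to M/W_{\leq\lambda}M$ is $W_{\leq\lambda}M$, because strict subobjects are kernels of their cokernels. The resulting identification is natural in $(M,W)$, so $o_\mathcal{A}$ is an equivalence.

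\textbf{Restriction to $W\mathcal{A}$.} I compute graded pieces:
\[
\operatorname{gr}^{W'}_\lambda M^\circ = \operatorname{Coker}_{\mathcal{A}^\circ}\bigl(W'_{<\lambda}\to W'_{\le\lambda}\bigr),
\]
which corresponds in $\mathcal{A}$ to
\[
\operatorname{Ker}\bigl(M/W_{<-\lambda}M \twoheadrightarrow M/W_{\le-\lambda}M\bigr) = W_{\le-\lambda}M/W_{<-\lambda}M = \operatorname{gr}^W_{-\lambda}M,
\]
using the exact sequence of \ref{rmk:6}(1). So $\operatorname{gr}^{W'}_\lambda M^\circ\cong(\operatorname{gr}^W_{-\lambda}M)^\circ$.

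It remains to show that $X\in\mathcal{A}(-\lambda)$ if and only if $X^\circ\in\mathcal{A}^\circ(\lambda)$ for the slope function $-\mu$. Strict subobjects of $X^\circ$ are quotients of $X$. Semistability of $X$ is equivalent to the absence of quotients of strictly smaller slope (\ref{def:6}(4)), and this is exactly semistability of $X^\circ$ with respect to $-\mu$. The slopes also match: $-\mu(X)=\lambda$.

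Hence $o_\mathcal{A}$ maps $(W\mathcal{A})^\circ$ into $W(\mathcal{A}^\circ)$, and symmetrically $o_{\mathcal{A}^\circ}$ maps back. Since both subcategories are defined by conditions invariant under isomorphism, they are strictly full, and the equivalence restricts as claimed.

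I expect the main obstacle to be only the sign and continuity bookkeeping between $W_{<}$ and $W_{\le}$ under the negation $\lambda\mapsto-\lambda$.
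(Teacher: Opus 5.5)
Your proposal is correct and is essentially the paper's proof. The paper takes $(o_{\mathcal{A}^\circ})^\circ$ as quasi-inverse, and uses $\text{gr}_\lambda\bigl(o_\mathcal{A}((M,W)^\circ)\bigr) \cong (\text{gr}^W_{-\lambda}M)^\circ$ together with the match between $\mu$-semistability in $\mathcal{A}$ and $(-\mu)$-semistability in $\mathcal{A}^\circ$; you spell out checks it leaves implicit, apart from one harmless sign slip. The map $\lambda \mapsto W_{<-\lambda}M$ is constant for $\lambda_{i-1} < -\lambda \le \lambda_i$, i.e.\ on $-\lambda_i \le \lambda < -\lambda_{i-1}$ (matching your breaks $-\lambda_i$), not on $\lambda_{i-1} \le \lambda < \lambda_i$.
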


\begin{proof}
The inverse equivalence is $\left(o_{\mathcal{A}^\circ}\right)^\circ$. As $\text{gr}^W_\lambda\!\left(o_\mathcal{A}\!\left((M,W)^\circ\right)\right) = \left(\text{gr}^W_{-\lambda}M\right)^\circ$, $o_\mathcal{A}$ and $\left(o_{\mathcal{A}^\circ}\right)^\circ$ restrict to an equivalence $\left(W\mathcal{A}\right)^\circ \simeq W\!\left(\mathcal{A}^\circ\right)$.
\end{proof}

\begin{subdefinition}
\namedlabel{def:3}{Definition \thesubsubsection}
A morphism $f: M \to N$ in $W\mathcal{A}^\text{pre}$ is \textit{strictly compatible with the filtrations} \cite[Definition 1.5.1]{ARTICLE:1} if the morphism of filtered objects
\begin{equation*}
\left(\text{Im}_\mathcal{A}f,\left(f\!\left(W_{\leq\lambda}M\right)\right)_\lambda\right) \to \left(\text{Im}_\mathcal{A}f,\left(f(M) \cap W_{\leq\lambda}N\right)_\lambda\right) = \text{Im}f
\end{equation*}
is an isomorphism. We say that $f$ is \textit{stricly co-compatible with the filtrations} if the morphism
\begin{equation*}
\text{Coim}f = \left(\text{Coim}_\mathcal{A}f,\left(\left(W_{\leq\lambda}M + \text{Ker}_\mathcal{A}f\right)/\text{Ker}_\mathcal{A}f\right)_\lambda\right) \to \left(\text{Coim}_\mathcal{A}f,\left(f^{-1}\!\left(W_{\leq\lambda}N\right)/\text{Ker}_\mathcal{A}f\right)_\lambda\right)
\end{equation*}
in $W\mathcal{A}^\text{pre}$ is an isomorphism. This amounts to requiring that the image $o_\mathcal{A}\!\left(f^\circ\right)$ of the opposite of $f$ under the equivalence $o_\mathcal{A}: \left(W\mathcal{A}^\text{pre}\right)^\circ \to W\!\left(\mathcal{A}^\circ\right)^\text{pre}$ from \ref{lemma:1} be strictly compatible with the filtrations in $W\!\left(\mathcal{A}^\circ\right)^\text{pre}$.
\end{subdefinition}

\begin{intermediate}
Note that a strict morphism in $W\mathcal{A}^\text{pre}$ is both strictly compatible and strictly co-compatible with the filtrations, but that the converse does not hold unless $\mathcal{A}$ is abelian. \\

In the appendix, we will see that strict compatibility and strict co-compatibility are equivalent for morphisms in $W\mathcal{A}^\text{pre}$ whose underlying morphism in $\mathcal{A}$ is strict (\ref{rmk:1}), and discuss under which hypothesis on the quasi-abelian category $\mathcal{A}$ they are equivalent for all morphisms in $W\mathcal{A}^\text{pre}$ (\ref{prop:4}). \\

Here, for the purpose of \ref{def:5}, we introduce the dual notion of the universal destabilising subobject, which we refer to as the universal destabilising quotient. We then prove a stability property needed in the sequel. The discussion of further properties is deferred to \ref{lemma:5}.
\end{intermediate}

\begin{subremark}
For $X \in \mathcal{A}$, let $(X/V)^\circ \hookrightarrow X^\circ$ be the universal destabilising subobject of $X^\circ \in \mathcal{A}^\circ$ with respect to the slope function $-\mu$ on $\mathcal{A}^\circ$ and say $(-\mu)\!\left((X/V)^\circ\right) = -\lambda$. Then $X \twoheadrightarrow X/V$ in $\mathcal{A}$ is the semistable quotient of $X$ of minimal slope $\lambda$, that is every quotient of $X$ has slope $\geq \lambda$ and if $X/V'$ has slope equal to $\lambda$, $X \twoheadrightarrow X/V'$ factors through $X \twoheadrightarrow X/V$. We call $X/V$ the \textit{universal destabilising quotient} of $X$. \\
Explicitly, let $\lambda \in \Lambda$ be maximal with the property that $F^{\geq\lambda}X = X$. Then the universal destabilising quotient of $X$ is $X/F^{>\lambda}X$ and it has slope $\lambda$. Indeed, the slope filtration on $X^\circ \in \mathcal{A}^\circ$ with respect to $-\mu$ is given by $F^{\geq \nu}\!\left(X^\circ\right) = \left(X/F^{>-\nu}X\right)^\circ$. From it, we obtain the unique chain of quotients of $X$ in $\mathcal{A}$ with semistable kernel of descending slope, terminating at the universal destabilising quotient.
\end{subremark}

\begin{sublemma}
\namedlabel{lemma:8}{Lemma \thesubsubsection}
Write $\text{UDS}(A) \hookrightarrow A$ for the universal destabilising subobject of $A\in \mathcal{A}$ and $A \twoheadrightarrow \text{UDQ}(A)$ for the universal destablising quotient. Let $0 \to X \to Y \stackrel{\pi}{\to} Z \to 0$ be a short exact sequence in $\mathcal{A}$ and suppose that $\mu\!\left(\text{UDS}(Y)\right) \leq \lambda$ and $\mu(X) = \lambda$ ($X$ is automatically semistable). Then $\mu\!\left(\text{UDS}(Z)\right) \leq \lambda$. \\
Dually, if $\mu\!\left(\text{UDQ}(Y)\right) \geq \lambda$ and $\mu(Z) = \lambda$, then $\mu\!\left(\text{UDQ}(X)\right) \geq \lambda$.
\end{sublemma}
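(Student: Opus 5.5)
Assume for contradiction that $\mu\!\left(\text{UDS}(Z)\right) > \lambda$. The plan is to pull $U \defeq \text{UDS}(Z)$ back to $Y$ and show that this pullback is a nonzero strict subobject of $Y$ with slope $> \lambda$. That would contradict $\mu(N) \leq \mu(\text{UDS}(Y)) \leq \lambda$ for every nonzero strict subobject $N \hookrightarrow Y$, which holds by the defining property of the universal destabilising subobject recalled in \ref{def:6}(4). The dual statement then follows by applying the first part in $\mathcal{A}^\circ$ with slope function $-\mu$. There the sequence $0 \to Z^\circ \to Y^\circ \to X^\circ \to 0$ is exact, $\text{UDQ}$ becomes $\text{UDS}$, and all inequalities flip.

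Concretely, set $Y' \defeq \pi^{-1}(U) \hookrightarrow Y$. By \ref{rmk:6}(1), applied to the strict subobject $U \hookrightarrow Z = Y/X$, the sequence $0 \to X \to Y' \to U \to 0$ is exact, and $Y' \hookrightarrow Y$ is a strict subobject. Since $U \neq 0$, we have $Y' \neq 0$. By additivity of $\text{deg}$ and $\text{rk}$,
\begin{equation*}
\mu(Y') = \frac{\text{rk}(X)\mu(X) + \text{rk}(U)\mu(U)}{\text{rk}(X)+\text{rk}(U)},
\end{equation*}
which is a weighted average in the uniquely divisible group $\Lambda$ (interpreting it via $\text{rk}(Y')\mu(Y') = \text{deg}(X) + \text{deg}(U)$, avoiding actual division). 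If $X = 0$, the term involving $X$ is simply absent and $\mu(Y') = \mu(U)$. Otherwise $\mu(X) = \lambda$ and $\mu(U) > \lambda$, so $\text{rk}(Y')\mu(Y') > \text{rk}(Y')\lambda$. Because $\Lambda$ is totally ordered and torsion-free (uniquely divisible), this gives $\mu(Y') > \lambda$.

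This contradicts $\mu(Y') \leq \mu(\text{UDS}(Y)) \leq \lambda$. The only step needing care is the claim that $\pi^{-1}(U)$ is a strict subobject of $Y$ fitting in the exact sequence; this is exactly the second part of \ref{rmk:6}(1), so I expect no real obstacle. The semistability of $X$ is not even needed. It is noted in the statement only because $\mu(X) = \lambda \geq \mu(\text{UDS}(Y)) \geq \mu(\text{UDS}(X))$, so $X$'s own destabilising subobject has slope at most $\mu(X)$, forcing equality.
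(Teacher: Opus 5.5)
Your proof is correct and is essentially the paper's argument. The paper likewise pulls a strict subobject $U \hookrightarrow Z$ back to $\pi^{-1}(U)$ and applies the same rank-weighted slope estimate along $0 \to X \to \pi^{-1}(U) \to U \to 0$. The only difference is that the paper argues directly for every such $U$, concluding $\mu(U) \leq \lambda$, whereas you argue by contradiction with $U = \text{UDS}(Z)$; your reduction of the dual statement to $\mathcal{A}^\circ$ with slope function $-\mu$ is what the paper intends by ``dually''.
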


\begin{proof}
For a strict subobject $U \hookrightarrow Z$, look at the resulting short exact sequence
\begin{equation*}
0 \to X \to \pi^{-1}(U) \to U \to 0.
\end{equation*}
As $\mu(X) = \lambda$ and $\mu\!\left(\pi^{-1}(U)\right) \leq \lambda$ by assumption, we must have $\mu(U) \leq \lambda$ according to \cite[Lemma 1.3.4(1)]{ARTICLE:1}. Explicitly, $\mu(U) = r^{-1}\!\left(\text{rk}\!\left(\pi^{-1}(U)\right)\mu\!\left(\pi^{-1}(U)\right) - \text{rk}(X)\mu(X)\right) \leq r^{-1}r\lambda = \lambda$ with $r \defeq \text{rk}\!\left(\pi^{-1}(U)\right) - \text{rk}(X)$.
\end{proof}

\begin{subdefinition}
\namedlabel{def:5}{Definition \thesubsubsection}
Let $W\mathcal{A}^\text{pre}_\text{uds}$ denote the full subcategory of $W\mathcal{A}^\text{pre}$ whose objects $(M,W)$ have the property that for all $\lambda \in \Lambda$, all nonzero subobjects of $\text{gr}^W_\lambda M$ have slope at most $\lambda$, equivalently, the universal destabilising subobject of $\text{gr}^W_\lambda M$ has slope at most $\lambda$ (in symbols, $\mu\!\left(\text{UDS}\!\left(\text{gr}^W_\lambda M\right)\right) \leq \lambda$). \\
Dually, let $W\mathcal{A}^\text{pre}_\text{udq}$ denote the full subcategory of $W\mathcal{A}^\text{pre}$ whose objects $(M,W)$ have the property that for all $\lambda$, all nonzero quotients of $\text{gr}^W_\lambda M$ have slope at least $\lambda$, equivalently, the universal destabilising quotient of $\text{gr}^W_\lambda M$ has slope at least $\lambda$ ($\mu\!\left(\text{UDQ}\!\left(\text{gr}^W_\lambda M\right)\right) \geq \lambda$).
\end{subdefinition}

\begin{intermediate}
$W\mathcal{A}^\text{pre}_\text{uds}$ is a strictly full subcategory of $W\mathcal{A}^\text{pre}$, closed under biproducts and kernels in $W\mathcal{A}^\text{pre}$. For $(M,W) \in W\mathcal{A}^\text{pre}_\text{uds}$ and every $\lambda$, the universal destabilising subobject of $W_{\leq\lambda}M$ has slope at most $\lambda$, see \ref{lemma:7} below. Dual statements apply to $W\mathcal{A}^\text{pre}_\text{udq}$, the second reading as follows: For $(M,W) \in W\mathcal{A}^\text{pre}_\text{udq}$ and every $\lambda$, the universal destabilising quotient of $M/W_{<\lambda}M$ has slope at least $\lambda$. \\
It holds that $\left(W\mathcal{A}^\text{pre}_\text{uds}\right)^\circ \simeq W\!\left(\mathcal{A}^\circ\right)^\text{pre}_\text{udq}$ under the equivalence from \ref{lemma:1}. Moreover, $W\mathcal{A} = W\mathcal{A}^\text{pre}_\text{uds} \cap W\mathcal{A}^\text{pre}_\text{udq}$ (inside $W\mathcal{A}^\text{pre}$) and thus $W\mathcal{A}$ is again an additive category.
\end{intermediate}

\begin{sublemma}
\namedlabel{lemma:7}{Lemma \thesubsubsection}
Let $N \in W\mathcal{A}^\text{pre}_\text{uds}$. For every $\lambda \in \Lambda$, the universal destabilising subobject of $W_{\leq\lambda}N$, whenever it is nonzero, has slope at most $\lambda$.
\end{sublemma}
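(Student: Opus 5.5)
Equivalently, we must show: for every nonzero strict subobject $U \hookrightarrow W_{\leq\lambda}N$ we have $\mu(U) \leq \lambda$. The argument is an induction along the finitely many breaks of $W$, using that slopes of extensions are weighted averages. Let $\lambda_0 < \lambda_1 < \dots < \lambda_r$ be the breaks of $W$ on $N$. Since $W_{\leq\lambda}N = W_{\leq\lambda_i}N$ for $\lambda_i \leq \lambda < \lambda_{i+1}$, and a bound by $\lambda_i$ implies the bound by $\lambda$, it suffices to treat $\lambda = \lambda_i$. We induct on $i$.

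For $i=0$ we have $W_{<\lambda_0}N = 0$, so $W_{\leq\lambda_0}N = \text{gr}^W_{\lambda_0}N$, and the claim is exactly the defining condition of $W\mathcal{A}^\text{pre}_\text{uds}$ (\ref{def:5}).

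For the inductive step, write $A \defeq W_{<\lambda_i}N = W_{\leq\lambda_{i-1}}N$ and $B \defeq W_{\leq\lambda_i}N$. There is a short exact sequence $0 \to A \to B \stackrel{\pi}{\to} \text{gr}^W_{\lambda_i}N \to 0$. Let $U \hookrightarrow B$ be a nonzero strict subobject. Put $U' \defeq U \cap A$, a strict subobject of $A$. Then the cokernel $U'' \defeq U/U'$ of $U' \hookrightarrow U$ maps monically into $\text{gr}^W_{\lambda_i}N$.

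We now bound the pieces. If $U' \neq 0$, the induction hypothesis gives $\mu(U') \leq \lambda_{i-1} < \lambda_i$. If $U'' \neq 0$, we want $\mu(U'') \leq \lambda_i$. Here lies the main subtlety: $U'' \to \text{gr}^W_{\lambda_i}N$ is a monic, but it need not be strict. To handle this, I would pass to the strict image $\overline{U''} \defeq \pi(U) \hookrightarrow \text{gr}^W_{\lambda_i}N$. The map $U'' \to \overline{U''}$ is epi-monic; this should follow from \ref{rmk:6}(2) applied to $U \to \text{gr}^W_{\lambda_i}N$, since $U'' = \text{Coim}$ and $\overline{U''} = \text{Im}$, given that $U' = \text{Ker}(U \to \text{gr}^W_{\lambda_i}N)$. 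Then \eqref{eq:dagger} gives $\mu(U'') \leq \mu(\overline{U''}) \leq \lambda_i$, the last inequality by the $\text{uds}$ condition on $\text{gr}^W_{\lambda_i}N$.

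Finally, additivity of $\text{rk}$ and $\text{deg}$ along $0 \to U' \to U \to U'' \to 0$ expresses $\mu(U)$ as the weighted average
\begin{equation*}
\mu(U) = \frac{\text{rk}(U')\mu(U') + \text{rk}(U'')\mu(U'')}{\text{rk}(U)},
\end{equation*}
where a term is omitted when the corresponding object is zero. Each summand is at most $\lambda_i$ times its rank, so $\mu(U) \leq \lambda_i$, in the same manner as the computation in the proof of \ref{lemma:8}. In particular this applies to the universal destabilising subobject, which completes the induction.

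The expected obstacle is verifying the quasi-abelian bookkeeping. One must check that $U \cap A$ is indeed the kernel of $U \to B \to \text{gr}^W_{\lambda_i}N$, which holds because $A = \text{Ker}\,\pi$ and pullbacks compute kernels. One must also check that the resulting sequence $0 \to U' \to U \to U'' \to 0$ is short exact, which holds since $U'' = \text{Coker}(U' \hookrightarrow U)$ by definition and $U' \hookrightarrow U$ is a kernel, hence strict.
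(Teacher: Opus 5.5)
Your proof is correct, but it is organised differently from the paper's.

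\textbf{The paper's route.} The paper inducts on the number of breaks by splitting off the \emph{lowest} step. It passes to the quotient $N/W_{\leq\nu}N$ in $W\mathcal{A}^\text{pre}$, noting that this quotient is again in $W\mathcal{A}^\text{pre}_\text{uds}$. It then argues by contradiction. If the universal destabilising subobject $U$ of $W_{\leq\lambda}N$ had slope $>\lambda$, then, being semistable, it would have zero image in $W_{\leq\lambda}N/W_{\leq\nu}N$. So $U$ would be a strict subobject of $W_{\leq\nu}N$, forcing $\mu(U)\leq\nu\leq\lambda$.

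\textbf{Your route.} You split off the \emph{top} graded piece via $0\to W_{<\lambda_i}N\to W_{\leq\lambda_i}N\to\text{gr}^W_{\lambda_i}N\to 0$. You then bound an arbitrary nonzero strict subobject $U$ by the weighted-average formula for $\mu$ along $0\to U\cap A\to U\to U''\to 0$. You use $(\dagger')$ only to compare $U''$ with its strict image in $\text{gr}^W_{\lambda_i}N$.

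\textbf{Comparison.} These are the two standard proofs that the class of objects with $\mu(\text{UDS})\leq\lambda$ is closed under extensions. The paper uses the Hom-vanishing argument, as in the proof of \ref{lemma:5}; you use the additivity argument, as in the proof of \ref{lemma:8}.

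Your version has several advantages:
\begin{itemize}
\item[(1)] It stays entirely inside $\mathcal{A}$.
\item[(2)] It needs neither the existence and semistability of universal destabilising subobjects, nor the fact that $N/W_{\leq\nu}N$ remains in $W\mathcal{A}^\text{pre}_\text{uds}$ (which requires identifying the graded pieces of that quotient).
\item[(3)] It directly yields the bound for all nonzero strict subobjects, which is the form actually invoked in \ref{prop:2}.
\end{itemize}

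The paper's version is shorter. It also showcases the principle that slopes obstruct nonzero morphisms, which drives the rest of the section.
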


\begin{proof}
The statement is true when the filtration $W$ of $N$ has no more than one break, and we proceed by induction on the number of breaks. Suppose $N \neq 0$ and let $\nu$ be the smallest break of $W$, then the quotient $N/W_{\leq\nu}N$ in $W\mathcal{A}^\text{pre}$ again belongs to $W\mathcal{A}^\text{pre}_\text{uds}$. If for some $\lambda \geq \nu$, the universal destabilising subobject $U$ of $W_{\leq\lambda}N$ had slope $> \lambda$, its image in $W_{\leq\lambda}N/W_{\leq\nu}N$ would be zero by induction. But then $U$ would be a (strict) subobject of $W_{\leq\nu}N$, hence zero or of slope $\leq \nu \leq \lambda$, a contradiction.
\end{proof}

\begin{intermediate}
Now we can formulate the key proposition.
\end{intermediate}

\begin{subproposition}
\namedlabel{prop:2}{Proposition \thesubsubsection}
If $f:M \to N$ is a morphism in $W\mathcal{A}^\text{pre}$ with $M \in W\mathcal{A}^\text{pre}_\text{udq}$ and $N \in W\mathcal{A}^\text{pre}_\text{uds}$, then $f$ is strictly compatible and strictly co-compatible with the filtrations.
\end{subproposition}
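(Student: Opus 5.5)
By the last sentence of \ref{def:3}, strict co-compatibility of $f$ is strict compatibility of $o_\mathcal{A}(f^\circ)$. Under \ref{lemma:1} we have $\left(W\mathcal{A}^\text{pre}_\text{uds}\right)^\circ \simeq W\!\left(\mathcal{A}^\circ\right)^\text{pre}_\text{udq}$, and dually. So $o_\mathcal{A}(f^\circ)$ again goes from a udq-object to a uds-object, and it suffices to prove strict compatibility. Fix $\lambda$. The canonical map $f(W_{\leq\lambda}M) \to f(M) \cap W_{\leq\lambda}N$ is a morphism between strict subobjects of $\text{Im}_\mathcal{A}f$, so by \ref{rmk:6}(3) it is a strict monic. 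We must show it is an isomorphism, i.e.\ that the strict quotient
\begin{equation*}
Q_\lambda \defeq \bigl(f(M) \cap W_{\leq\lambda}N\bigr)/f(W_{\leq\lambda}M)
\end{equation*}
vanishes.

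The slope inputs are \ref{lemma:7} and its dual. Every nonzero subobject of $W_{\leq\lambda}N$ has slope $\leq \lambda$. Every nonzero quotient of $M/W_{\leq\lambda}M$ has slope $\geq \lambda^+ > \lambda$, where $\lambda^+$ is the next break of $M$ above $\lambda$. I would realise $Q_\lambda$ as the image of a morphism and play these bounds against each other. Put $K \defeq f^{-1}(W_{\leq\lambda}N)/W_{\leq\lambda}M$, a strict subobject of $M/W_{\leq\lambda}M$. Then $f$ induces
\begin{equation*}
g\colon K \to W_{\leq\lambda}N/f(W_{\leq\lambda}M),
\end{equation*}
and $\text{Im}\, g$ is identified with $Q_\lambda$ using \ref{rmk:6}(1) for strict subquotients. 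The epi-monic $\text{Coim}\, g \to \text{Im}\, g$ gives $\mu(\text{Coim}\, g) \leq \mu(\text{Im}\, g)$ by \eqref{eq:dagger}. So $Q_\lambda \neq 0$ yields a contradiction once we know two bounds:
\begin{itemize}
\item[(a)] nonzero quotients of $K$ have slope $> \lambda$;
\item[(b)] nonzero subobjects of $W_{\leq\lambda}N/f(W_{\leq\lambda}M)$ have slope $\leq \lambda$.
\end{itemize}

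Neither bound is formal, because $K$ is a subobject rather than a quotient of $M/W_{\leq\lambda}M$, and the target of $g$ is a quotient rather than a subobject of $W_{\leq\lambda}N$. To obtain them I would induct on the total number of breaks of $M$ and $N$. I would peel off the lowest break $\nu$ of $N$: the quotient $N/W_{\leq\nu}N$ stays in $W\mathcal{A}^\text{pre}_\text{uds}$, as in the proof of \ref{lemma:7}. Dually, I would peel off the top break of $M$: the object $W_{<\text{top}}M$ stays in $W\mathcal{A}^\text{pre}_\text{udq}$. The induction hypothesis applied to the composite $M \to N/W_{\leq\nu}N$ and to the restriction $W_{<\text{top}}M \to N$ controls all but one graded step. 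The remaining step involves $\text{gr}^W_\nu N$ and $\text{gr}^W_{\text{top}}M$ directly. There \ref{lemma:8}, applied to the short exact sequences $0 \to f(W_{\leq\lambda}M)\cap(\cdot) \to \cdots$, transports the bound ``subobjects of slope $\leq \lambda$'' across the quotient by a piece of slope exactly $\lambda$. Its dual transports the quotient bound across the corresponding subobject.

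I expect the main obstacle to be exactly this bookkeeping. One has to arrange that the piece one quotients by (respectively restricts to) has slope exactly $\lambda$, or at least is semistable of the right slope, so that \ref{lemma:8} applies. In a merely quasi-abelian $\mathcal{A}$ without the modular law, one must also check that the sum and intersection identifications of \ref{rmk:6}(1) are used only for nested strict subobjects. If the direct induction gets stuck, I would first reduce to the case where $M$ and $N$ each have a single break. In that case (a) and (b) are immediate from \ref{def:5}: the map $g$ becomes a morphism from a udq-piece of weight $>\lambda$ to a uds-piece of weight $\leq\lambda$, hence zero by \eqref{eq:dagger}. I would then assemble the general case by the five-lemma style argument on the finite flags.
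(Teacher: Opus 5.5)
Your duality reduction and the remark that $f(W_{\leq\lambda}M)\hookrightarrow f(M)\cap W_{\leq\lambda}N$ is strict monic are fine. But the proposal stops exactly where the real work begins.

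\textbf{Bounds (a) and (b) are not proved.} They are no easier than the proposition itself: they are true, but only as consequences of it together with \ref{cor:2}. Your induction paragraph never says which object of slope exactly $\lambda$ one should quotient by, or why it has that slope. The claim that the ``remaining step'' only involves $\text{gr}^W_\nu N$ and the top graded piece of $M$ is unsupported.

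\textbf{The fallback does not work either.} Strict compatibility is not detected on graded pieces: the epi-monic in \ref{rmk:1} has strict graded pieces but is not strictly compatible. So the trivial single-break case says nothing about how $f(W_{\leq a}M)$ meets $W_{<a}N$, which is precisely where the slope hypotheses must be used. A five-lemma assembly would also presuppose exactness properties of $W$ that are only established afterwards.

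\textbf{There is also an error in the setup.} $\text{Im}\,g$ is $f\bigl(f^{-1}(W_{\leq\lambda}N)\bigr)/f(W_{\leq\lambda}M)$, not $Q_\lambda$. For non-strict $f$, the equality $f(f^{-1}(Q))=Q\cap f(M)$ is equivalent to the modular law (\ref{lemma:2}). That law is not assumed, and \ref{rmk:6}(1) does not supply it. So even $g=0$ would not give $f(W_{\leq\lambda}M)=f(M)\cap W_{\leq\lambda}N$.

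\textbf{The missing idea.} Induct on the breaks of $M$ only, remove its \emph{lowest} break $\mu$, and quotient $N$ by the image $f(W_{\leq\mu}M)$ rather than by a filtration step of $N$. The key claim is $f(W_{\leq\mu}M)\cap W_{<\mu}N=0$, proved by a three-term slope estimate:
\begin{itemize}
\item[(i)] If the intersection were nonzero, it would have slope $<\mu$ by \ref{lemma:7}.
\item[(ii)] $f(W_{\leq\mu}M)$ has slope $\geq\mu$, since it receives an epi from $W_{\leq\mu}M=\text{gr}^W_\mu M$.
\item[(iii)] Its quotient by the intersection is monic into $\text{gr}^W_\mu N$, hence has slope $\leq\mu$.
\end{itemize}
Additivity of $\text{deg}$ and $\text{rk}$ then forces the intersection to have slope $\geq\mu$, contradicting (i).

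Consequently $W_{\leq\mu}M\to f(W_{\leq\mu}M)\to\text{gr}^W_\mu N$ is an epi followed by a monic. So $f(W_{\leq\mu}M)$ is semistable of slope exactly $\mu$, and by \eqref{eq:dagger} it is a strict subobject of $\text{gr}^W_\mu N$. This is the exact-slope piece your plan was looking for. The quotient $N/f(W_{\leq\mu}M)$ has the graded pieces of $N$ except at $\mu$, where it has $\text{gr}^W_\mu N/f(W_{\leq\mu}M)$. Hence it lies in $W\mathcal{A}^\text{pre}_\text{uds}$ by \ref{lemma:8}.

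The induction hypothesis for $M/W_{\leq\mu}M\to N/f(W_{\leq\mu}M)$ then gives strict compatibility for $\lambda\geq\mu$. All identifications there involve strict subobjects containing $f(W_{\leq\mu}M)$, so no modular law is needed. Finally, the vanishing intersection together with the case $\lambda=\mu$ gives $f(M)\cap W_{<\mu}N=0$, which settles $\lambda<\mu$.
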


\begin{proof}
As the assumptions on $f$ are preserved under duality, it suffices to prove that $f$ is strictly compatible with the filtrations. \\
Call $(*_\lambda)$ the statement that $f\!\left(W_{\leq\lambda}M\right) \hookrightarrow f(M) \cap W_{\leq\lambda}N$ is an isomorphism. So we have to show that $(*_\lambda)$ holds for all $\lambda \in \Lambda$. The proof will proceed by induction on the number of breaks of the filtration of the domain $M$. At each step in the induction, letting $\mu$ denote the smallest break of the filtration of $M$, we will be able to deduce from the induction hypothesis that $(*_\lambda)$ holds for $\lambda \geq \mu$. To get it for all $\lambda$, we will need the preparatory statement \eqref{eq:+}, which we prove independently of the induction. Throughout, the case $f\!\left(W_{\leq\mu}M\right) = 0$ will be trivial, so we may assume that $f\!\left(W_{\leq\mu}M\right) \neq 0$. \\
First, for $\mu$ defined above, let us show that
\begin{equation}
\label{eq:+}
\tag{+}
f\!\left(W_{\leq\mu}M\right) \cap W_{<\mu}N = 0. 
\end{equation}
On the one hand, \ref{lemma:7} tells us that as a (strict) subobject of $W_{<\mu}N$, $f\!\left(W_{\leq\mu}M\right) \cap W_{<\mu}N$ has slope $< \mu$ unless it is zero. One the other hand, $f\!\left(W_{\leq\mu}M\right)$ admits an epi from $W_{\leq\mu}M$, hence is of slope $\geq \mu$, and $f\!\left(W_{\leq\mu}M\right)/\left(f\!\left(W_{\leq\mu}M\right) \cap W_{<\mu}N\right)$ admits a monic to $\text{gr}^W_\mu N$, hence is zero or of slope $\leq \mu$. So a slope estimate as in the proof of \ref{lemma:8} shows that $f\!\left(W_{\leq\mu}M\right) \cap W_{<\mu}N$ must have slope $\geq \mu$ if it is nonzero and we conclude that it is zero. This establishes \eqref{eq:+}. \\
Now we turn to verifying by induction on the number of breaks of the filtration of $M$ that $(*_\lambda)$ holds for all $\lambda$, that is $f$ is strictly compatible with the filtrations as claimed. As induction start, we can take either the case of no or of precisely one break, for which everything has already been shown. \\
We aim to apply the induction hypothesis to the morphism $\bar{f}: M/W_{\leq\mu}M \to X \defeq N/f\!\left(W_{\leq\mu}M\right)$ in $W\mathcal{A}^\text{pre}$. To this end, we need to show that
\begin{equation}
\label{eq:++}
\tag{++}
X \in W\mathcal{A}^\text{pre}_\text{uds}.
\end{equation}
According to \eqref{eq:+}, the composite $W_{\leq\mu}M \to f\!\left(W_{\leq\mu}M\right) \stackrel{h}{\to} \text{gr}^W_\mu N$ in $\mathcal{A}$ is an epi followed by a monic, which implies that $f\!\left(W_{\leq\mu}M\right)$ is (semistable) of slope $\mu$. So $\text{Coim}(h) = f\!\left(W_{\leq\mu}M\right)$ has slope $\mu$, while $\text{Im}(h)$ has slope $\leq \mu$, and it follows from condition \eqref{eq:dagger} on the slope function that $h$ is strict monic. The resulting isomorphism $f\!\left(W_{\leq\mu}M\right) \to \text{Im}(h)$ factors through the canonical epis $f\!\left(W_{\leq\mu}M\right) \to \left(W_{\leq\lambda}N + f\!\left(W_{\leq\mu}M\right)\right)/W_{\leq\lambda}N$ for all $\lambda < \mu$, which are thus themselves isomorphisms (\ref{rmk:6}(3)) and whose inverses give rise to split exact sequences $0 \to W_{\leq\lambda}N \to W_{\leq\lambda}N + f\!\left(W_{\leq\mu}M\right) \to f\!\left(W_{\leq\mu}M\right) \to 0$. Hence we see that $\text{gr}^W_\lambda N \to \text{gr}^W_\lambda X$ is an isomorphism whenever $\lambda \neq \mu$, whereas $\text{gr}^W_\mu X \cong \text{gr}^W_\mu N/f\!\left(W_{\leq\mu}M\right)$. From \ref{lemma:8}, we infer that every strict subobject of $\text{gr}^W_\mu X$ has slope at most $\mu$, which proves \eqref{eq:++}. \\
Therefore, $\bar{f}: M/W_{\leq\mu}M \to N/f\!\left(W_{\leq\mu}M\right)$ is strictly compatible with the filtrations by induction. For $\lambda \geq \mu$, this means that
\begin{equation*}
\frac{f\!\left(W_{\leq\lambda}M\right)}{f\!\left(W_{\leq\mu}M\right)} \hookrightarrow \frac{f(M) \cap W_{\leq\lambda}N}{f\!\left(W_{\leq\mu}M\right)} = \frac{f(M)}{f\!\left(W_{\leq\mu}M\right)} \cap \frac{W_{\leq\lambda}N}{f\!\left(W_{\leq\mu}M\right)}
\end{equation*}
is an isomorphism. It follows that $f\!\left(W_{\leq\lambda}M\right) \hookrightarrow f(M) \cap W_{\leq\lambda}N$ is an isomorphism when $\lambda \geq \mu$, that is $(*_\lambda)$ holds when $\lambda \geq \mu$. Thanks to \eqref{eq:+}, we get $(*_\lambda)$ also for $\lambda < \mu$, because $(*_\mu)$ together with \eqref{eq:+} implies that $f(M) \cap W_{<\mu}N = f(M) \cap W_{\leq\mu}N \cap W_{<\mu}N = f\!\left(W_{\leq\mu}M\right) \cap W_{<\mu}N = 0$. With this, the induction is complete.
\end{proof}

\begin{intermediate}
If desired, one could reduce the proof of the proposition to the case that $f$ is epi-monic. However, we did not see a gain in efficiency. \\

\ref{prop:2} is the main technical step in establishing the following stronger result.
\end{intermediate}

\begin{subcorollary}
\namedlabel{cor:1}{Corollary \thesubsubsection}
Any $f$ as in \ref{prop:2} is strict in $W\mathcal{A}^\text{pre}$ and $\text{Coim}f \cong \text{Im}f \in W\mathcal{A}$.
\end{subcorollary}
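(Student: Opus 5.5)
The plan is to reduce strictness of $f$ in $W\mathcal{A}^\text{pre}$ to two facts. First, by \ref{prop:2}, $f$ is strictly compatible and strictly co-compatible with the filtrations. Second, once this is known, the comparison $\text{Coim}f \to \text{Im}f$ in $W\mathcal{A}^\text{pre}$ is the same as the comparison morphism
\begin{equation*}
\varphi: \left(\text{Coim}_\mathcal{A}f,\left(f^{-1}\!\left(W_{\leq\lambda}N\right)/\text{Ker}_\mathcal{A}f\right)_\lambda\right) \to \left(\text{Im}_\mathcal{A}f,\left(f\!\left(W_{\leq\lambda}M\right)\right)_\lambda\right),
\end{equation*}
and it suffices to show that $\varphi$ is an isomorphism.

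The main step is to show that $\text{Coim}f$ and $\text{Im}f$ both lie in $W\mathcal{A}$. For $\text{Im}f$, I use the filtration $f(W_{\leq\lambda}M)$. Each $\text{gr}_\lambda$ is then a quotient of $\text{gr}^W_\lambda M$ in $\mathcal{A}$, since $W_{\leq\lambda}M \to f(W_{\leq\lambda}M)$ is epi. This quotient is zero or has all quotients of slope $\geq\lambda$, because a quotient of a quotient is a quotient and $M \in W\mathcal{A}^\text{pre}_\text{udq}$. I also use the filtration $f(M)\cap W_{\leq\lambda}N$, which agrees with the first by \ref{prop:2}. Its graded pieces are strict subobjects of $\text{gr}^W_\lambda N$ (by \ref{rmk:6}(1), $(f(M)\cap W_{\leq\lambda}N)/(f(M)\cap W_{<\lambda}N)\hookrightarrow W_{\leq\lambda}N/W_{<\lambda}N$ is strict monic). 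Hence all their nonzero subobjects have slope $\leq\lambda$. A nonzero object in which every quotient has slope $\geq\lambda$ and every subobject has slope $\leq\lambda$ has slope exactly $\lambda$ and is semistable. So $\text{gr}_\lambda(\text{Im}f) \in \mathcal{A}(\lambda)$, i.e.\ $\text{Im}f \in W\mathcal{A}$. The dual argument, which uses \ref{lemma:1} and the strict co-compatibility part of \ref{prop:2}, gives $\text{Coim}f \in W\mathcal{A}$.

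Next I show that $\varphi$ is an isomorphism. By \ref{rmk:6}(2) applied to $\text{Fun}(\Lambda,\mathcal{A})$, or directly, the underlying map $\text{Coim}_\mathcal{A}f \to \text{Im}_\mathcal{A}f$ is epi-monic. The same holds on each $W_{\leq\lambda}$: the map $W_{\leq\lambda}(\text{Coim}f)\to W_{\leq\lambda}(\text{Im}f)$ is the canonical map $\text{Coim}\to\text{Im}$ of $W_{\leq\lambda}M\to N$ restricted suitably, so it is epi-monic in $\mathcal{A}$. I then induct on breaks to show each $\text{gr}_\lambda\varphi$ is epi-monic. The map $\text{gr}_\lambda\varphi$ is a morphism in the abelian category $\mathcal{A}(\lambda)$ between objects of the same rank, since rank is additive and $\varphi$ is epi-monic on each filtration step. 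Epi-monic maps preserve rank, because their kernel and cokernel in the envelope are torsion of rank zero; I expect this to follow from $\text{rk}$ extending to $K_0(\mathcal{L}_\mathcal{A})$, or else from the slope argument below. The source and target of $\text{gr}_\lambda\varphi$ are semistable of the same slope $\lambda$. Then $\text{Coim}(\text{gr}_\lambda\varphi)\to\text{Im}(\text{gr}_\lambda\varphi)$ is epi-monic in $\mathcal{A}$ between objects of $\mathcal{A}(\lambda)$, hence an isomorphism by \eqref{eq:dagger}. Kernel and cokernel of $\text{gr}_\lambda\varphi$ in $\mathcal{A}(\lambda)$ are zero by the rank count, since rank is nonzero on nonzero objects. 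Therefore $\text{gr}_\lambda\varphi$ is an isomorphism for all $\lambda$, and the five lemma in the quasi-abelian setting, by induction over the finitely many breaks, makes $\varphi$ an isomorphism levelwise.

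I expect the main obstacle to be the identification of $\text{Coim}f$ and $\text{Im}f$ in $W\mathcal{A}^\text{pre}$ with the displayed filtered objects, i.e.\ that kernels and cokernels in $W\mathcal{A}^\text{pre}$ carry the induced and quotient filtrations. I would take this from \ref{prop:1} of the appendix. A further delicate point is the claim that epi-monics preserve rank. If that fails, I would instead argue directly with slopes: $\text{gr}_\lambda\varphi$ factors as epi followed by monic, with $\text{Coim}$ of slope $\geq\lambda$ and $\text{Im}$ of slope $\leq\lambda$, which forces both to equal $\lambda$. Then \eqref{eq:dagger} together with the snake-type exact sequences from \ref{rmk:6}(1) yields the isomorphism.
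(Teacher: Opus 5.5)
Your argument is correct in substance. It rests on the same ingredients as the paper's proof: \ref{prop:2}, the slope sandwich via \eqref{eq:dagger}, and a five-lemma induction over the breaks. You organise them less economically, though.

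The paper notes two facts:
\begin{itemize}
\item[(a)] strict compatibility makes $\text{gr}^W_\lambda M \to \text{gr}_\lambda\text{Im}f$ epi;
\item[(b)] strict co-compatibility makes $\text{gr}_\lambda\text{Coim}f \to \text{gr}^W_\lambda N$ monic.
\end{itemize}
Hence $\text{gr}_\lambda\varphi$ is epi-monic, from a quotient of $\text{gr}^W_\lambda M$ (slope $\geq\lambda$) to a subobject of $\text{gr}^W_\lambda N$ (slope $\leq\lambda$). Then \eqref{eq:dagger} forces it to be an isomorphism, and membership in $W\mathcal{A}$ comes for free.

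You instead first place $\text{Im}f$ and $\text{Coim}f$ in $W\mathcal{A}$ separately, and only then compare. This has a small bonus: it shows that each needs only one of the two compatibilities. But your first step already contains (a) and (b). So $\text{gr}_\lambda\varphi$ is epi-monic simply by factoring through $\text{gr}^W_\lambda M$ and $\text{gr}^W_\lambda N$, with no induction on breaks needed. An epi-monic between objects of the abelian category $\mathcal{A}(\lambda)$ is an isomorphism, so the rank count is superfluous.

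Three claims in your proposal need correcting.
\begin{itemize}
\item[(1)] Drop the rank argument altogether. Nothing in the axioms makes epi-monics preserve rank: torsion objects of $\mathcal{L}_\mathcal{A}$ need not have rank zero, and the paper singles out that situation only as a special case after \eqref{eq:dagger}.
\item[(2)] \ref{rmk:6}(1) does not make $\text{gr}_\lambda\text{Im}f \to \text{gr}^W_\lambda N$ strict monic. For strict subobjects $A,B \hookrightarrow C$, the map $A/(A\cap B) \to (A+B)/B$ is in general only epi-monic in a quasi-abelian category. It already fails in $\mathcal{B}_1$: take $C=k^2$ with $F^1C = k(1,1)$, $A = k(1,0)$ and $B = k(0,1)$. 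You only need monicity, however. If $S \to \text{gr}^W_\lambda N$ is monic, apply \eqref{eq:dagger} to the epi-monic $S \to \text{Im}(S \to \text{gr}^W_\lambda N)$; this gives $\mu(S) \leq \lambda$.
\item[(3)] The same caveat applies to your description of $W_{\leq\lambda}\varphi$ as a restricted $\text{Coim}\to\text{Im}$ of $W_{\leq\lambda}M \to N$. That identification holds only up to an epi-monic, but the step is not needed.
\end{itemize}
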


\begin{proof}
Strict compatibility and strict co-compatibility of $f$ with the filtrations together imply that $\text{gr}\!\left(\text{Coim}f \to \text{Im}f\right)$ is epi-monic, thus an isomorphism by the assumption that the domain of $f$ is in $W\mathcal{A}^\text{pre}_\text{udq}$ and that the codomain is in $W\mathcal{A}^\text{pre}_\text{uds}$. It follows that $f$ is strict in $W\mathcal{A}^\text{pre}$ and that $\text{Coim}f \cong \text{Im}f \in W\mathcal{A}$.
\end{proof}

\begin{intermediate}
The following construction is supplementary. While we will not use it in our proofs, it helps clarify the meaning of \ref{cor:1} (\ref{rmk:7}).
\end{intermediate}

\begin{subconstruction}
\namedlabel{cstr:1}{Construction \thesubsubsection}
For $N \in W\mathcal{A}^\text{pre}_\text{uds}$, there exists a largest strict subobject $N' \hookrightarrow N$ in $W\mathcal{A}^\text{pre}$ with the property that $N' \in W\mathcal{A}$, that is, any other strict subobject $Q \hookrightarrow N$ in $W\mathcal{A}^\text{pre}$ with $Q \in W\mathcal{A}$ factors through $N' \hookrightarrow N$. \\
Dually, for $M \in W\mathcal{A}^\text{pre}_\text{udq}$, there exists a smallest strict quotient $M \twoheadrightarrow M''$ in $W\mathcal{A}^\text{pre}$ with the property that $M'' \in W\mathcal{A}$.
\end{subconstruction}

\begin{proof}
Let $\Lambda_0 = \left\{\lambda_0 \leq \lambda_1 \leq ... \leq \lambda_n\right\} \subset \Lambda$ be the subset consisting of all breaks $\lambda$ of the filtration of $N$ for which there exists a strict subobject $Q \hookrightarrow N$ in $W\mathcal{A}^\text{pre}$ with $Q \in W\mathcal{A}$ and $\text{gr}^W_\lambda Q \neq 0$. Then the breaks of the filtration of any strict subobject $Q \hookrightarrow N$ in $W\mathcal{A}^\text{pre}$ with $Q \in W\mathcal{A}$ are contained in $\Lambda_0$. So we recursively construct $N' \in W\mathcal{A}^\text{pre}$ on the finitely many intervals $\lambda < \lambda_0$, $\lambda_{i-1} \leq \lambda < \lambda_i$, $1 \leq i \leq n$, and $\lambda_n \leq \lambda$. That is, for every $\lambda_i \in \Lambda_0$, we specify $W_{<\lambda_i}N' \hookrightarrow W_{\leq\lambda_i}N'$ assuming $W_{<\lambda_i}N' = W_{\leq\lambda_{i-1}}N'$ already given. \\
If $N$ has no nonzero strict subobject in $W\mathcal{A}^\text{pre}$ which is in $W\mathcal{A}$, we can take $N' = 0$. Otherwise, $\Lambda_0$ is not empty, say contains at least $\lambda_0$. Put $W_{\leq\lambda}N' \defeq 0$ for $\lambda < \lambda_0$, so that $W_{<\lambda_0}N' = 0$. For $\lambda \in \Lambda_0$, let $W_{\leq\lambda}N'$ be the pullback of the universal destabilising subobject $U_\lambda$ of $W_{\leq\lambda}N/W_{<\lambda}N'$ along $W_{\leq\lambda}N \to W_{\leq\lambda}N/W_{<\lambda}N'$. Then $N' \in W\mathcal{A}^\text{pre}$ so constructed verifies $\text{gr}^W_\lambda N' = 0$ whenever $\lambda \notin \Lambda_0$, while for $\lambda \in \Lambda_0$, we have $\text{gr}^W_\lambda N' = U_\lambda$, which is semistable. \\
We show by induction over $i$ that for all $\lambda_i \in \Lambda_0$, $U_{\lambda_i}$ has slope $\lambda_i$ and for any strict subobject $Q \hookrightarrow N$ with $Q \in W\mathcal{A}$, it holds that $W_{\leq\lambda_i}Q \hookrightarrow W_{\leq\lambda_i}N$ factors through $W_{\leq\lambda_i}N' \hookrightarrow W_{\leq\lambda_i}N$. By definition of $\Lambda_0$, it then follows that $Q \hookrightarrow N$ factors through $N' \to N$. \\
On the one hand, $U_{\lambda_0} \hookrightarrow W_{\leq\lambda_0}N$ must have slope $\geq \lambda_0$ since by choice of $\lambda_0$, $W_{\leq\lambda_0}N$ admits a subobject of slope $\lambda_0$. On the other hand, \ref{lemma:7} says that $U_{\lambda_0}$ has slope $\leq \lambda_0$. So $U_{\lambda_0}$ has slope equal to $\lambda_0$ and in turn, $W_{\leq\lambda_0}Q \hookrightarrow W_{\leq\lambda_0}N$ factors through $U_{\lambda_0} \hookrightarrow W_{\leq\lambda_0}N$. \\
In the induction step, let $i \geq 1$ be such that the claim has been established for the predecessor $\lambda_{i-1}$ of $\lambda_i$ in $\Lambda_0$ (while $i \leq n$). Suppose by contradiction that $U_{\lambda_i}$ has slope $> \lambda_i$. It follows that $W_{\leq\lambda_i}N'/\!\left(W_{\leq\lambda_i}N' \cap W_{<\lambda_i}N\right) = 0$, because in the composite of canonical maps $U_{\lambda_i} = W_{\leq\lambda_i}N'/W_{\leq\lambda_{i-1}}N' \to W_{\leq\lambda_i}N'/\!\left(W_{\leq\lambda_i}N' \cap W_{<\lambda_i}N\right) \to \text{gr}^W_{\lambda_i}N$, the first arrow is strict epi and the second arrow is monic. So $W_{\leq\lambda_i}N' \hookrightarrow W_{<\lambda_i}N$. Iterating this argument, we find that $W_{\leq\lambda_i}N'/\!\left(W_{\leq\lambda_i}N' \cap W_{\leq\lambda_{i-1}}N\right) = 0$. As then $W_{\leq\lambda_i}N'/W_{<\lambda_{i-1}}N' \hookrightarrow W_{\leq\lambda_{i-1}}N/W_{<\lambda_{i-1}}N'$, $W_{\leq\lambda_i}N'/W_{<\lambda_{i-1}}N'$ must have slope $\leq \lambda_{i-1}$ by the induction hypothesis. However, a slope estimate along the short exact sequence
\[\begin{tikzcd}[column sep=small]
0 & U_{\lambda_{i-1}} & {\frac{W_{\leq\lambda_i}N'}{W_{<\lambda_{i-1}}N'}} & {U_{\lambda_i}} & 0
\arrow[from=1-1, to=1-2]
\arrow[from=1-2, to=1-3]
\arrow[from=1-3, to=1-4]
\arrow[from=1-4, to=1-5]
\end{tikzcd}\]
shows that $W_{\leq\lambda_i}N'/W_{<\lambda_{i-1}}N'$ has slope $> \lambda_{i-1}$, a contradiction. \\
Therefore, $U_{\lambda_i}$ is zero or has slope $\leq \lambda_i$. Consequently, the morphism $\text{gr}^W_{\lambda_i}Q \to W_{\leq\lambda_i}N/W_{\leq\lambda_{i-1}}N'$ (which exists by induction) factors through $U_{\lambda_i} \hookrightarrow W_{\leq\lambda_i}N/W_{\leq\lambda_{i-1}}N'$, and by assumption, there is such $Q$ with $\text{gr}^W_{\lambda_i}Q \neq 0$. So in fact, $U_{\lambda_i}$ has slope $\lambda_i$ and $W_{\leq\lambda_i}Q \hookrightarrow W_{\leq\lambda_i}N$ lands in $W_{\leq\lambda_i}N'$. This finishes the induction. \\
In particular, $N' \in W\mathcal{A}$. With this, strictness of the monic $N' \to N$ in $W\mathcal{A}^\text{pre}$ is an instance of \ref{cor:1}. The construction of $N' \hookrightarrow N$ is thus complete. \\
The dual statement now follows from \ref{lemma:1}.
\end{proof}

\begin{subremark}
\namedlabel{rmk:7}{Remark \thesubsubsection}
With the notation of \ref{cstr:1}, any $f$ as in \ref{prop:2} factors as
\begin{equation*}
f: M \twoheadrightarrow M'' \twoheadrightarrow \text{Coim}f \cong \text{Im}f \hookrightarrow N' \hookrightarrow N
\end{equation*}
and it holds that $\text{Im}f \cong \text{Im}\!\left(M'' \to N'\right)$.
\end{subremark}

\begin{intermediate}
One can say more about this particular class of morphisms in $W\mathcal{A}^\text{pre}$.
\end{intermediate}

\begin{sublemma}
\namedlabel{lemma:3}{Lemma \thesubsubsection}
Let $i: M \hookrightarrow N$ be strict monic in $W\mathcal{A}^\text{pre}$ with $M \in W\mathcal{A}^\text{pre}_\text{udq}$ and $N \in W\mathcal{A}^\text{pre}_\text{uds}$, and consider the cokernel $\text{coker}(i)$ of $i$ in $W\mathcal{A}^\text{pre}$.
\begin{itemize}
\item[(1)] $\text{coker}(i)$ is also the cokernel of $i$ in $\text{Fun}(\Lambda,\mathcal{A})$.
\item[(2)] It holds that $\text{Coker}(i) \in W\mathcal{A}^\text{pre}_\text{uds}$. 
\item[(3)] $\text{Coker}(i) \in W\mathcal{A}$ when $N \in W\mathcal{A}$.
\end{itemize}
Note that $M \in W\mathcal{A}$ is automatic.
\end{sublemma}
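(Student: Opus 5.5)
We first identify what a strict monic $i$ in $W\mathcal{A}^\text{pre}$ looks like. It is the kernel of its cokernel in $W\mathcal{A}^\text{pre}$. By \ref{cor:1}, $M \cong \text{Im}\, i \in W\mathcal{A}$, which gives the final remark of the statement. By \ref{prop:2}, $i$ is strictly compatible with the filtrations, so that $W_{\leq\lambda}M \cong M \cap W_{\leq\lambda}N$ for all $\lambda$. The underlying morphism of $i$ in $\mathcal{A}$ is a strict monic, being the colimit of a strict monic in $W\mathcal{A}^\text{pre}$. We want this underlying morphism, and each $W_{\leq\lambda}M \to W_{\leq\lambda}N$, to be strict monic; this should follow from the description of kernels and cokernels in the appendix (\ref{prop:1}).

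For (1), the candidate cokernel in $\text{Fun}(\Lambda,\mathcal{A})$ is the functor $\lambda \mapsto W_{\leq\lambda}N/W_{\leq\lambda}M$. We must show it is a finite ascending filtration on $N/M$, i.e.\ that each transition $W_{\leq\lambda}N/W_{\leq\lambda}M \to N/M$ is strict monic. Using $W_{\leq\lambda}M = M \cap W_{\leq\lambda}N$, the map in question is $W_{\leq\lambda}N/(M\cap W_{\leq\lambda}N) \to (W_{\leq\lambda}N + M)/M \hookrightarrow N/M$. The first arrow is in general only epi-monic, and this is where the hypotheses must enter, since the modular law can fail in $\mathcal{A}$.

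To handle this, we would use strict co-compatibility, also from \ref{prop:2}, and pass to graded pieces. Inductively on the breaks of $N$, the graded pieces of the candidate are the objects $\text{gr}^W_\lambda N/\text{gr}^W_\lambda M$, where $\text{gr}^W_\lambda M \to \text{gr}^W_\lambda N$ is a monic from a semistable object of slope $\lambda$ into an object all of whose subobjects have slope $\leq\lambda$. As in the proof of \ref{prop:2} (the argument with $h$ and condition \eqref{eq:dagger}), such a map is strict monic. Comparing the slopes of the epi-monic $W_{\leq\lambda}N/(M\cap W_{\leq\lambda}N) \to (W_{\leq\lambda}N+M)/M$, obtained by additivity from these graded pieces, with the slopes on the target side should force equality of slopes, hence an isomorphism by \eqref{eq:dagger}.

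The cleanest route is perhaps to imitate the induction in \ref{prop:2}. Let $\mu$ be the smallest break of $M$. The proof of \ref{prop:2} shows that $W_{\leq\mu}M$ maps isomorphically onto a strict subobject of $\text{gr}^W_\mu N$ and that the resulting sequences split below $\mu$. Hence $N/W_{\leq\mu}M$ is computed levelwise and lies in $W\mathcal{A}^\text{pre}_\text{uds}$ by (++). We then apply induction to $M/W_{\leq\mu}M \hookrightarrow N/W_{\leq\mu}M$ and use \ref{rmk:6}(1) to identify $(N/W_{\leq\mu}M)/(M/W_{\leq\mu}M) \cong N/M$ levelwise. We expect this induction, in particular verifying that the quotient morphism is again strict monic and checking that the levelwise quotients compose correctly, to be the main obstacle.

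Once (1) holds, (2) follows: the graded pieces of $\text{Coker}(i)$ are $\text{gr}^W_\lambda N/\text{gr}^W_\lambda M$ with $\text{gr}^W_\lambda M$ zero or of slope $\lambda$, so \ref{lemma:8} gives $\mu(\text{UDS}) \leq \lambda$. For (3), if additionally $\text{gr}^W_\lambda N \in \mathcal{A}(\lambda)$, then its cokernel by a strict subobject in $\mathcal{A}(\lambda)$ lies in $\mathcal{A}(\lambda)$, because $\mathcal{A}(\lambda)$ is closed under cokernels (\ref{def:6}(4)). Hence $\text{Coker}(i) \in W\mathcal{A}$.
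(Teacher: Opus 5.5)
Your proposal is correct, but for (1) it takes a different route from the paper.

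Your inductive route goes through, and the step you flag as the main obstacle is routine. From the proof of \ref{prop:2} you have \eqref{eq:+}, the splittings below $\mu$, and \eqref{eq:++}. Together they show that the cokernel of $W_{\leq\mu}M \hookrightarrow N$ in $W\mathcal{A}^\text{pre}$ is computed levelwise and lies in $W\mathcal{A}^\text{pre}_\text{uds}$. Also, $M/W_{\leq\mu}M \in W\mathcal{A}^\text{pre}_\text{udq}$ has one break fewer than $M$.

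The induced map $M/W_{\leq\mu}M \to N/W_{\leq\mu}M$ is strict monic in $\mathcal{A}$ by \ref{rmk:6}(1), and it carries the induced filtration:
\begin{itemize}
\item[(a)] For $\lambda \geq \mu$, both sides give $W_{\leq\lambda}M/W_{\leq\mu}M$, again by \ref{rmk:6}(1).
\item[(b)] For $\lambda < \mu$ you do not need the modular law. Since $W_{\leq\lambda}N + W_{\leq\mu}M \subset W_{\leq\mu}N$ and $M \cap W_{\leq\mu}N = W_{\leq\mu}M$, you get $M \cap (W_{\leq\lambda}N + W_{\leq\mu}M) = W_{\leq\mu}M$.
\end{itemize}
Its levelwise cokernels agree with those of $i$: by \ref{rmk:6}(1) for $\lambda \geq \mu$, and by the splitting (with $W_{\leq\lambda}M = 0$) for $\lambda < \mu$.

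For (2) you still need $\text{gr}^W_\lambda \text{Coker}(i) \cong \text{Coker}(\text{gr}^W_\lambda i)$ with $\text{gr}^W_\lambda i$ strict monic. This follows from (1) by the nine lemma in $\mathcal{L}_\mathcal{A}$.

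The paper needs no induction. It first obtains from \eqref{eq:dagger} that $\text{gr}^W_\lambda i$ is strict monic, as in your third paragraph. It then looks at the one-step transitions $i_\lambda: W_{<\lambda}N/W_{<\lambda}M \to W_{\leq\lambda}N/W_{\leq\lambda}M$. Your third paragraph instead uses the cumulative epi-monic $W_{\leq\lambda}N/W_{\leq\lambda}M \to (W_{\leq\lambda}N + M)/M$. For that map, the degree of the target is not determined by the data you have, so the slope comparison as you state it does not close.

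For $i_\lambda$, strictness of $\text{gr}^W_\lambda i$ gives $W_{\leq\lambda}M/W_{<\lambda}M \cong (W_{\leq\lambda}M + W_{<\lambda}N)/W_{<\lambda}N$. This pins down the degree and rank of $W_{<\lambda}N + W_{\leq\lambda}M$. Consequently the epi-monic $W_{<\lambda}N/W_{<\lambda}M \to (W_{<\lambda}N + W_{\leq\lambda}M)/W_{\leq\lambda}M$ has the same degree and the same rank at both ends, and is an isomorphism by \eqref{eq:dagger}. Thus every $i_\lambda$ is strict monic with $\text{Coker}(i_\lambda) \cong \text{Coker}(\text{gr}^W_\lambda i)$. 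This yields (1) and the graded pieces needed for (2) and (3) in one stroke.

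The paper's argument is shorter and self-contained. Yours recycles the lowest-break analysis of \ref{prop:2} and avoids a new degree computation. The price is more bookkeeping and reliance on intermediate steps inside another proof rather than on a stated result.
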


\begin{proof}
Due to condition \eqref{eq:dagger} on the slope function, the assumptions imply that for every $\lambda$, the morphism $\text{gr}^W_\lambda i$ is strict monic. \\
From this, we obtain an isomorphism $W_{\leq\lambda}M/W_{<\lambda}M \to \left(W_{\leq\lambda}M + W_{<\lambda}N\right)/W_{<\lambda}N$. A comparison of the degrees yields $\text{deg}\!\left(W_{<\lambda}N + W_{\leq\lambda}M\right) - \text{deg}\!\left(W_{\leq\lambda}M\right) = \text{deg}\!\left(W_{<\lambda}N\right) - \text{deg}\!\left(W_{<\lambda}M\right)$ and the same is true for the ranks. Then again due to condition \eqref{eq:dagger}, also the epi-monic $W_{<\lambda}N/W_{<\lambda}M \to \left(W_{<\lambda}N + W_{\leq\lambda}M\right)/W_{\leq\lambda}M$ is an isomorphism, which means that the monic $i_\lambda: W_{<\lambda}N/W_{<\lambda}M \to W_{\leq\lambda}N/W_{\leq\lambda}M$ is strict. As $\text{Coker}\!\left(i_\lambda\right) \cong \text{Coker}\!\left(\text{gr}^W_\lambda i\right)$, we are done with (1) and (3). \\
Knowing that $\text{gr}^W_\lambda i$ is strict monic, one may as well conclude by applying the snake lemma in either abelian envelope of $\mathcal{A}$. \\
By (1), the sequence $0 \to M \stackrel{i}{\to} N \to \text{Coker}(i) \to 0$ is exact in $\text{Fun}(\mathcal{A},\Lambda)$ and induces for all $\lambda$ a short exact sequence
\begin{equation*}
0 \to \text{gr}^W_\lambda M \to \text{gr}^W_\lambda N \to \text{gr}^W_\lambda \text{Coker}(i) \to 0
\end{equation*}
in $\mathcal{A}$. Therefore, (2) results from \ref{lemma:8}. Again, (3) follows, because $W\mathcal{A}^\text{pre}_\text{udq}$ is closed under strict quotients in $W\mathcal{A}^\text{pre}$.
\end{proof}

\begin{intermediate}
Let us mention that the strategy of the proof of \ref{lemma:3} can be used to give a proof of \cite[Lemma 1.5.5, (1) $\implies$ (2)]{ARTICLE:1} in the case that the slope function there satisfies \eqref{eq:dagger}. \\

The following corollary completes the demonstration of \ref{thm:A} from the introduction.
\end{intermediate}

\begin{subcorollary}
\namedlabel{cor:2}{Corollary \thesubsubsection}
If $f: M \to N$ is a morphism in $W\mathcal{A}^\text{pre}$ with $M \in W\mathcal{A}^\text{pre}_\text{udq}$ and $N \in W\mathcal{A}^\text{pre}_\text{uds}$, then the cokernel of $f$ in $W\mathcal{A}^\text{pre}$ is computed in $\text{Fun}(\Lambda,\mathcal{A})$ and it lies in $W\mathcal{A}^\text{pre}_\text{uds}$. In particular, $\text{Coker}f \in W\mathcal{A}$ when $N \in W\mathcal{A}$. \\
Dually (\ref{lemma:1}), $\text{Ker}f \in W\mathcal{A}^\text{pre}_\text{udq}$, and it lies in $W\mathcal{A}$ when $M \in W\mathcal{A}$.
\end{subcorollary}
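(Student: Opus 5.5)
The plan is to factor $f$ and reduce to \ref{lemma:3}. By \ref{cor:1}, $f$ is strict in $W\mathcal{A}^\text{pre}$ and $I \defeq \text{Coim}f \cong \text{Im}f \in W\mathcal{A}$. Thus $f$ factors as a strict epi $M \twoheadrightarrow I$ followed by the strict monic $i: I \hookrightarrow N$. The cokernel of $f$ in $W\mathcal{A}^\text{pre}$ coincides with the cokernel of $i$, since precomposing with an epi does not change cokernels. We apply \ref{lemma:3} to $i$, which requires $I \in W\mathcal{A}^\text{pre}_\text{udq}$; this holds because $I \in W\mathcal{A} = W\mathcal{A}^\text{pre}_\text{uds} \cap W\mathcal{A}^\text{pre}_\text{udq}$. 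The lemma then shows that $\text{coker}(i)$ is computed in $\text{Fun}(\Lambda,\mathcal{A})$, that it lies in $W\mathcal{A}^\text{pre}_\text{uds}$, and that it lies in $W\mathcal{A}$ when $N \in W\mathcal{A}$.

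Next, we check that the cokernel of $f$ in $\text{Fun}(\Lambda,\mathcal{A})$ agrees with that of $i$. Cokernels in $\text{Fun}(\Lambda,\mathcal{A})$ are computed pointwise. It therefore suffices that for every $\lambda$ the map $W_{\leq\lambda}M \to W_{\leq\lambda}I$ is epi in $\mathcal{A}$, so that $\text{Coker}(f_\lambda) = \text{Coker}(i_\lambda)$. Here $W_{\leq\lambda}I$ is the coimage filtration, namely $(W_{\leq\lambda}M + \text{Ker}_\mathcal{A}f)/\text{Ker}_\mathcal{A}f$, which by construction is a strict quotient of $W_{\leq\lambda}M$. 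Indeed, $W_{\leq\lambda}M \oplus \text{Ker}_\mathcal{A}f \to W_{\leq\lambda}M + \text{Ker}_\mathcal{A}f$ is epi, and passing to the quotient kills the $\text{Ker}_\mathcal{A}f$ summand. Strict co-compatibility from \ref{prop:2} identifies this with the filtration on $\text{Coim}f$, so the identification is legitimate. This step is routine bookkeeping about how $\text{Coim}$ is formed in $W\mathcal{A}^\text{pre}$ (see \ref{prop:1}), and it is the only place I expect any care to be needed.

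Finally, the kernel statement follows by duality. Under the equivalence $o_\mathcal{A}$ of \ref{lemma:1}, $f^\circ$ becomes a morphism in $W(\mathcal{A}^\circ)^\text{pre}$ from $o_\mathcal{A}(N^\circ) \in W(\mathcal{A}^\circ)^\text{pre}_\text{udq}$ to $o_\mathcal{A}(M^\circ) \in W(\mathcal{A}^\circ)^\text{pre}_\text{uds}$. Kernels correspond to cokernels under this equivalence. The first part, applied to $\mathcal{A}^\circ$ with slope function $-\mu$, therefore gives $\text{Ker}f \in W\mathcal{A}^\text{pre}_\text{udq}$, and $\text{Ker}f \in W\mathcal{A}$ when $M \in W\mathcal{A}$, using $(W\mathcal{A})^\circ \simeq W(\mathcal{A}^\circ)$.
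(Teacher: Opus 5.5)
Your proposal is correct and follows the paper's proof: \ref{cor:1} gives $\text{Im}f \in W\mathcal{A}$ with $M \to \text{Im}f$ epi in $\text{Fun}(\Lambda,\mathcal{A})$, one applies \ref{lemma:3} to the strict monic $\text{Im}f \hookrightarrow N$, and the kernel statement follows via \ref{lemma:1}. The only slip is cosmetic: you need merely that $W_{\leq\lambda}M \to \left(W_{\leq\lambda}M + \text{Ker}_\mathcal{A}f\right)/\text{Ker}_\mathcal{A}f$ is epi, which is what your argument shows (it is not a strict quotient ``by construction'' in a general quasi-abelian category), and since this is the filtration of $\text{Coim}f$ by definition, no appeal to strict co-compatibility is needed there.
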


\begin{intermediate}
The kernel of $f$ is anyway computed in $\text{Fun}(\Lambda,\mathcal{A})$ because this is true for an arbitrary morphism in $W\mathcal{A}^\text{pre}$. One does not deduce this under use of \ref{lemma:1} because no equivalence $\left(W\mathcal{A}^\text{pre}\right)^\circ \simeq W\!\left(\mathcal{A}^\circ\right)^\text{pre}$ will extend to the ambient functor category (cokernels in $W\mathcal{A}^\text{pre}$ not being cokernels in $\text{Fun}(\Lambda,\mathcal{A})$ in general).
\end{intermediate}

\begin{proof}[Proof of \ref{cor:2}]
According to \ref{cor:1}, one has $\text{Im}f \in W\mathcal{A}$ and $M \to \text{Im}f$ is epi in the functor category. Then one applies \ref{lemma:3}.
\end{proof}

\subsection{An application}

The motivation behind the considerations of the preceding subsection comes the following theorem (\ref{thm:B} from the introduction), which is their intended application and as such the core of this work. It is inspired by and generalises Deligne's theorem \cite[Théorème 1.2.10]{ARTICLE:2} on the category of mixed Hodge structures being abelian, compare \ref{ex:5}. \\

We recall our standing assumptions: $\mathcal{A}$ is a quasi-abelian category equipped with a slope function $\mu$ (\ref{def:6}(3)), and $W\mathcal{A}^\text{pre}$, $W\mathcal{A}$ are as in \ref{def:1}. \\
In what follows, we will say that a functor between additive categories with kernels and cokernels is \textit{exact} if it preserves short exact sequences, and that it is \textit{strongly exact} if it preserves all kernels and cokernels.

\begin{subtheorem}
\namedlabel{thm:1}{Theorem \thesubsubsection}
$\text{}$
\begin{itemize}
\item[(0)] Any morphism in $W\mathcal{A}$ is strictly compatible and strictly co-compatible with the filtrations.
\item[(1)] The category $W\mathcal{A}$ is abelian.
\item[(2)] The full subcategory inclusion $W\mathcal{A} \hookrightarrow \text{Fun}(\Lambda,\mathcal{A})$ is strongly exact. In other words, the functor $W_{\leq \lambda}: W\mathcal{A} \to \mathcal{A}$ is strongly exact for every $\lambda \in \Lambda$. \\
Also the functor $\text{gr}^W_\lambda: W\mathcal{A} \to \mathcal{A}$ is strongly exact for every $\lambda \in \Lambda$.
\end{itemize}
\end{subtheorem}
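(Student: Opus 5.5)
Since $W\mathcal{A} = W\mathcal{A}^\text{pre}_\text{uds} \cap W\mathcal{A}^\text{pre}_\text{udq}$, every morphism $f: M \to N$ in $W\mathcal{A}$ satisfies the hypotheses of \ref{prop:2}, \ref{cor:1} and \ref{cor:2}. The plan is to deduce all three items from these results. Item (0) is then immediate from \ref{prop:2}.

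For (1), I first show that $W\mathcal{A}$ has kernels and cokernels and that they agree with those in $W\mathcal{A}^\text{pre}$. By \ref{prop:1} (appendix), $W\mathcal{A}^\text{pre}$ is additive with kernels and cokernels. By \ref{cor:2}, $\text{Ker}f$ and $\text{Coker}f$ lie in $W\mathcal{A}$. Since $W\mathcal{A}$ is a full subcategory, they are also the kernel and cokernel in $W\mathcal{A}$. Consequently $\text{Coim}f$ and $\text{Im}f$ computed in $W\mathcal{A}$ coincide with those in $W\mathcal{A}^\text{pre}$: they are a cokernel of a kernel and a kernel of a cokernel, and each intermediate object stays in $W\mathcal{A}$. \ref{cor:1} now says that $\text{Coim}f \to \text{Im}f$ is an isomorphism. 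So every morphism of $W\mathcal{A}$ is strict, and by \ref{rmk:6}(2) the additive category $W\mathcal{A}$ is abelian.

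For (2), kernels in $W\mathcal{A}^\text{pre}$ are computed in $\text{Fun}(\Lambda,\mathcal{A})$, as noted after \ref{cor:2}. \ref{cor:2} states that cokernels of morphisms of $W\mathcal{A}$ are also computed in $\text{Fun}(\Lambda,\mathcal{A})$. Combined with the previous paragraph, the inclusion $W\mathcal{A} \hookrightarrow \text{Fun}(\Lambda,\mathcal{A})$ preserves all kernels and cokernels. Kernels and cokernels in $\text{Fun}(\Lambda,\mathcal{A})$ are computed pointwise, so each evaluation $W_{\leq\lambda}$ is strongly exact.

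For $\text{gr}^W_\lambda$, I would write it as the cokernel of $W_{<\lambda} \to W_{\leq\lambda}$. Here $W_{<\lambda} = W_{\leq\lambda'}$ for some $\lambda' < \lambda$ that may depend on the object, so I would instead pick, for a given morphism $f: M \to N$, a $\lambda'$ below $\lambda$ and above all breaks of $M$ and $N$ that are smaller than $\lambda$. This is the main obstacle: showing that $\text{gr}^W_\lambda$ preserves kernels and cokernels, not just short exact sequences.

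For cokernels the argument is as follows. Put $C = \text{Coker}f$. Its breaks lie among those of $N$, so the same $\lambda'$ works for $C$. Since cokernels are computed pointwise, $W_{\leq\mu}N \to W_{\leq\mu}C$ is strict epi with cokernel-of-$f$ description for $\mu = \lambda,\lambda'$, and a $3\times3$ argument gives $\text{Coker}(\text{gr}^W_\lambda f) \cong \text{gr}^W_\lambda C$. A cokernel of a cokernel-diagram is a colimit, and colimits commute with colimits, so this is formal.

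For kernels the formal argument does not apply, because limits need not commute with cokernels. I would therefore use that all morphisms in $W\mathcal{A}$ are strict and that $W_{\leq\lambda}$ is strongly exact. This means the sequences $0 \to W_{\leq\mu}\text{Ker}f \to W_{\leq\mu}M \to W_{\leq\mu}\text{Im}f \to 0$ and $0 \to W_{\leq\mu}\text{Im}f \to W_{\leq\mu}N \to W_{\leq\mu}\text{Coker}f \to 0$ are exact in $\mathcal{A}$ for each $\mu$. Passing to $\text{gr}$ via \ref{lemma:3}(1), applied to the strict monics $\text{Ker}f \hookrightarrow M$ and $\text{Im}f \hookrightarrow N$ whose cokernels are computed in $\text{Fun}(\Lambda,\mathcal{A})$, gives short exact sequences on $\text{gr}^W_\lambda$. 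Splicing them yields that $\text{gr}^W_\lambda\text{Ker}f$ is the kernel of $\text{gr}^W_\lambda f$. Here I use that $\text{gr}^W_\lambda M \to \text{gr}^W_\lambda \text{Im}f$ is strict epi and $\text{gr}^W_\lambda\text{Im}f \to \text{gr}^W_\lambda N$ is strict monic, so that the kernel of the composite is the kernel of the first map.

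Alternatively, the kernel case would follow from the cokernel case by duality via \ref{lemma:1}, since $\text{gr}$ corresponds to $\text{gr}$ with index $-\lambda$.
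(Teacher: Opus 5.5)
Your proposal is correct and follows the paper's proof. As in the paper, (0) comes from \ref{prop:2}, existence of kernels and cokernels and strong exactness of the inclusion come from \ref{cor:2}, and abelianness then follows from \ref{cor:1}.

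The only difference is in the step for $\text{gr}^W_\lambda$. The paper applies the nine lemma in an abelian envelope to get exactness, and then uses the general fact that an exact functor out of an abelian category preserves all kernels and cokernels. Your splicing argument, together with the formal colimit argument for cokernels, is an explicit version of that general fact, so this step is less of an obstacle than you suggest.
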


\begin{proof}
(0) is a special case of \ref{prop:2} and we state it only for the sake of completeness, as it is in particular contained in (1) and (2). It follows from \ref{cor:2} that the additive category $W\mathcal{A}$ has kernels and cokernels, and also strong exactness of $W\mathcal{A} \hookrightarrow \text{Fun}(\Lambda,\mathcal{A})$ in (2) was established in \ref{cor:2}. Together with \ref{cor:1}, this implies (1). \\
Apply the nine lemma from homological algebra in an abelian envelope of $\mathcal{A}$ to see that exactness of $W\mathcal{A} \hookrightarrow \text{Fun}(\Lambda,\mathcal{A})$ is equivalent to exactness of $\text{gr}^W_\lambda: W \mathcal{A} \to \mathcal{A}$ for all $\lambda$. Since $W\mathcal{A}$ is abelian, an exact functor out of $W\mathcal{A}$ is already strongly exact.
\end{proof}

\begin{intermediate}
\ref{thm:1} asserts in particular that also the inclusion $W\mathcal{A} \hookrightarrow W\mathcal{A}^\text{pre}$ is strongly exact. It follows that the canonical functor $W\mathcal{A} \to \mathcal{A}$ is strongly exact, so the underlying morphism of a morphism in $W\mathcal{A}$ is strict in $\mathcal{A}$. Moreover, $W\mathcal{A} \to \mathcal{A}$ reflects kernels, cokernels and isomorphisms.
\end{intermediate}

\begin{subremark}
\namedlabel{rmk:4}{Remark \thesubsubsection}
For contrast, we look at the failure of (strong) exactness of $W\mathcal{A}^\text{pre} \hookrightarrow \text{Fun}(\Lambda,\mathcal{A})$.
\begin{itemize}
\item[(1)] Although the canonical functor $W\mathcal{A}^\text{pre} \to \mathcal{A}$ is always strongly exact, this does, with the exception of the trivial case, never apply to $W\mathcal{A}^\text{pre} \hookrightarrow \text{Fun}(\Lambda,\mathcal{A})$ (take the example from \ref{rmk:1}). If $\mathcal{A}$ is abelian, the inclusion $W\mathcal{A}^\text{pre} \hookrightarrow \text{Fun}(\Lambda,\mathcal{A})$ is at least exact. There are quasi-abelian categories for which this fails to hold. \\
We give an example which will also be used for later reference. Let $\mathcal{B}$ be the abelian category of finite-dimensional vector spaces over a field $k$ and let $\mathcal{B}_1$ be the quasi-abelian category of filtered $k$-vector spaces. The following sequence in $W\!\left(\mathcal{B}_1\right)^\text{pre}$ with the canonical morphisms constitutes a short exact sequence in $W\!\left(\mathcal{B}_1\right)^\text{pre}$ which is not exact in the ambient functor category:
\[\begin{tikzcd}
0 & A & B & C & 0
\arrow[from=1-1, to=1-2]
\arrow[from=1-2, to=1-3]
\arrow[from=1-3, to=1-4]
\arrow[from=1-4, to=1-5]
\end{tikzcd}\]
\[\begin{tikzcd}[sep={1.75cm,between origins}]
& 0 & 0 & k\begin{pmatrix} 1 \\ 1 \end{pmatrix} \\
{A =} & 0 & 0 & 0 \\
& 0 & 0 & 0
\arrow[hook, from=1-2, to=1-3]
\arrow[hook, from=1-3, to=1-4]
\arrow[hook, from=2-2, to=1-2]
\arrow[hook, from=2-2, to=2-3]
\arrow[hook, from=2-3, to=1-3]
\arrow[hook, from=2-3, to=2-4]
\arrow[hook, from=2-4, to=1-4]
\arrow[hook, from=3-2, to=2-2]
\arrow[hook, from=3-2, to=3-3]
\arrow[hook, from=3-3, to=2-3]
\arrow[hook, from=3-3, to=3-4]
\arrow[hook, from=3-4, to=2-4]
\end{tikzcd}
\begin{tikzcd}[sep={1.75cm,between origins}]
& 0 & k\begin{pmatrix} 1 \\ 0 \end{pmatrix} & {k^2} \\
{B =} & 0 & 0 & k\begin{pmatrix} 0 \\ 1 \end{pmatrix} \\
& 0 & 0 & 0
\arrow[hook, from=1-2, to=1-3]
\arrow[hook, from=1-3, to=1-4]
\arrow[hook, from=2-2, to=1-2]
\arrow[hook, from=2-2, to=2-3]
\arrow[hook, from=2-3, to=1-3]
\arrow[hook, from=2-3, to=2-4]
\arrow[hook, from=2-4, to=1-4]
\arrow[hook, from=3-2, to=2-2]
\arrow[hook, from=3-2, to=3-3]
\arrow[hook, from=3-3, to=2-3]
\arrow[hook, from=3-3, to=3-4]
\arrow[hook, from=3-4, to=2-4]
\end{tikzcd}\]
\[\begin{tikzcd}[sep={2.25cm,between origins}]
& 0 & k^2/k\begin{pmatrix} 1 \\ 1 \end{pmatrix} & k^2/k\begin{pmatrix} 1 \\ 1 \end{pmatrix} \\
{C =} & 0 & k^2/k\begin{pmatrix} 1 \\ 1 \end{pmatrix} & k^2/k\begin{pmatrix} 1 \\ 1 \end{pmatrix} \\
& 0 & 0 & 0
\arrow[hook, from=1-2, to=1-3]
\arrow[hook, from=1-3, to=1-4]
\arrow[hook, from=2-2, to=1-2]
\arrow[hook, from=2-2, to=2-3]
\arrow[hook, from=2-3, to=1-3]
\arrow[hook, from=2-3, to=2-4]
\arrow[hook, from=2-4, to=1-4]
\arrow[hook, from=3-2, to=2-2]
\arrow[hook, from=3-2, to=3-3]
\arrow[hook, from=3-3, to=2-3]
\arrow[hook, from=3-3, to=3-4]
\arrow[hook, from=3-4, to=2-4]
\end{tikzcd}\]
\item[(2)] Exactness of $W\mathcal{A}^\text{pre} \hookrightarrow \text{Fun}(\Lambda,\mathcal{A})$ in the case that $\mathcal{A}$ is abelian implies that in this case, $W\mathcal{A}$ is closed under extensions in $W\mathcal{A}^\text{pre}$. In general, it follows from \ref{lemma:3} that if $0 \to M \to N \to P \to 0$ is a short exact sequence in $W\mathcal{A}^\text{pre}$ with $M,P \in W\mathcal{A}$ and $N \in W\mathcal{A}^\text{pre}_\text{uds}$ or, dually, $N \in W\mathcal{A}^\text{pre}_\text{udq}$, then $N \in W\mathcal{A}$. \\
The author does not know whether it is true for an arbitrary quasi-abelian category $\mathcal{A}$ that $W\mathcal{A}$ is closed under extensions in $W\mathcal{A}^\text{pre}$. At least one readily obtains extension-closedness inside the functor category, see \ref{lemma:6}.
\end{itemize}
\end{subremark}

\begin{subremark}
We give another proof of \ref{thm:1}, using an argument which Simpson \cite[Lemma 1.3]{MISC:3} employed to show abelianess of the category of mixed twistor structures.
\begin{itemize}
\item[(1)] First, we translate the proof of \cite[Lemma 1.3]{MISC:3} to our setup. For this purpose, recall \ref{rmk:6}(4). The strategy is to show that kernels and cokernels in $W\mathcal{A}$ are computed in the left abelian envelope $\mathcal{L}_\mathcal{A}$ of $\mathcal{A}$. Both our proof of strict compatibility of morphisms in $W\mathcal{A}$ with the filtrations and the argument here proceed by induction on the number of breaks of the filtrations on domain and codomain, however while we disposed of the lowest break in order to apply the induction hypothesis, the focus will be on the highest break in the following. \\
Given a morphism $f: (M,W) \to (M',W')$ in $W\mathcal{A}$, define $(M'',W'')$ to be the cokernel of $f$ in $W\mathcal{L}_\mathcal{A}^\text{pre}$, that is $M'' \defeq \text{Coker}_{\mathcal{L}_\mathcal{A}}f$ and $W''_{\leq\lambda} M'' \defeq \frac{W'_{\leq\lambda}M' + f(M)}{f(M)} = \frac{W'_{\leq\lambda}M'}{W'_{\leq\lambda}M' \cap f(M)}$, where everything is computed in $\mathcal{L}_\mathcal{A}$. We do induction on the sum of the number of breaks of the filtrations $W$ and $W'$ to show that $(M'',W'')$ is in $W\mathcal{A}$. If both filtrations break at most once, then $f$ is either zero or a morphism between pure structures of the same weight, so that in any case $(M'',W'') \in W\mathcal{A}$. \\
In the induction step, let $\lambda \in \Lambda$ be minimal such that both $W_{\leq\lambda}M = M$ and $W'_{\leq\lambda}M' = M'$. In $\mathcal{L}_\mathcal{A}$, it holds that $\text{gr}^{W''}_\lambda M'' = \frac{M'}{W'_{<\lambda}M' + f(M)}$ is the cokernel of $\text{gr}_\lambda f$. Since $\text{gr}_\lambda f$ is zero or a morphism between semistable objects of the same slope, it is strict, and hence $\text{gr}^{W''}_\lambda M''$ lies in $\mathcal{A}$ and is the cokernel of $\text{gr}_\lambda f$ in $\mathcal{A}$. Moreover, it belongs to $\mathcal{A}(\lambda)$. \\
Let $V \defeq f^{-1}\!\left(W'_{<\lambda}M'\right)$, computed equivalently in $\mathcal{A}$ or in $\mathcal{L}_\mathcal{A}$. If $K$ denotes the kernel of $\text{gr}_\lambda f$ (in $\mathcal{A}$ or $\mathcal{L}_\mathcal{A}$), we have a short exact sequence in $\mathcal{A}$
\begin{equation*}
0 \to W_{<\lambda}M \to V \to K \to 0
\end{equation*}
and furthermore $K \in \mathcal{A}(\lambda)$. \\
By induction, the cokernel of $f_{<\lambda}: \left(W_{<\lambda}M,W_{<\lambda}M \cap W\right) \to \left(W'_{<\lambda}M',W'_{<\lambda}M' \cap W'\right)$ in $W\mathcal{L}_\mathcal{A}^\text{pre}$ lies in $W\mathcal{A}$, note that at least one of $W$ or $W'$ breaks at $\lambda$. This implies $L \defeq \text{Coker}_{\mathcal{L}_\mathcal{A}}\!\left(f_{<\lambda}\right) \in \mathcal{A}$ and that the morphism $K = V/W_{<\lambda}M \to L$ induced by $f$ is zero. So $f(V) = f\!\left(W_{<\lambda}M\right)$ in $\mathcal{L}_\mathcal{A}$. \\
At the same time, $f(V) = f\!\left(f^{-1}\!\left(W'_{<\lambda}M'\right)\right) = W'_{<\lambda}M' \cap f(M) = \text{Ker}\!\left(W'_{<\lambda}M' \to W''_{<\lambda}M''\right)$ in $\mathcal{L}_\mathcal{A}$. Therefore, $L = W''_{<\lambda}M''$. In total, $W''_{<\lambda}M''$ with the filtration induced from $M''$ is in $W\mathcal{A}$. \\
Now
\begin{equation*}
0 \to W''_{<\lambda}M'' \to M'' \to \text{gr}^{W''}_\lambda M'' \to 0
\end{equation*}
is a short exact sequence in $\mathcal{L}_\mathcal{A}$ with $W''_{<\lambda}M'', \text{gr}^{W''}_\lambda M'' \in \mathcal{A}$. Since $\mathcal{A}$ is stable under extensions in $\mathcal{L}_\mathcal{A}$, we get $M'' \in \mathcal{A}$ and are done. \\
With the necessary modifications, we can run the argument also in the right abelian envelope $\mathcal{R}_\mathcal{A}$ of $\mathcal{A}$. This allows to deduce by duality that also kernels in $W\mathcal{A}$ are computed in $W\mathcal{L}_\mathcal{A}^\text{pre}$. \\
In particular, any morphism of $W\mathcal{A}$ has the property that its underlying morphism in $\mathcal{A}$ is strict. Therefore, in order to establish abelianess of $W\mathcal{A}$ as well as strong exactness of $W\mathcal{A} \hookrightarrow \text{Fun}(\Lambda,\mathcal{A})$, it now suffices to show that a morphism of $W\mathcal{A}$ whose underlying morphism in $\mathcal{A}$ is an isomorphism is strictly compatible (or, inside $W\mathcal{A}$ equivalently, strictly co-compatible) with the filtrations. But in fact, strict compatibility of any morphism in $W\mathcal{A}$ can already be read off from the above induction. Indeed, keeping our notation, $f\!\left(W_{\leq\lambda}M\right) = f(M) = M' \cap f(M) = W'_{\leq\lambda}M' \cap f(M)$ holds, $f_{<\lambda}$ may be assumed by induction to be strictly compatible with the filtrations and we have seen that $\text{Im}f_{<\lambda} = f\!\left(W_{<\lambda}M\right) = W'_{<\lambda}M' \cap f(M)$.
\item[(2)] Second, we indicate how one can generalise this argument to obtain a proof of \ref{prop:2}. Namely, starting more generally from a morphism $f: (M,W) \to (M',W')$ in $W\mathcal{A}^\text{pre}$ with $(M,W) \in W\mathcal{A}^\text{pre}_\text{udq}$ and $(M',W') \in W\mathcal{A}^\text{pre}_\text{uds}$, but otherwise keeping notation, one sees that all nonzero quotients of $K$ have slope $\geq \lambda$ by looking at the short exact sequence
\begin{equation*}
0 \to K \to \text{gr}^W_\lambda M \to \text{Coim}_\mathcal{A}\!\left(\text{gr}_\lambda f\right) \to 0
\end{equation*}
in $\mathcal{A}$. Assuming by induction that all nonzero subobjects of $L$ have slope $< \lambda$, one again finds that the morphism $K \to L$ induced by $f$ is zero and concludes as before that the cokernel of $f$ in $W\mathcal{A}^\text{pre}$ is computed in $W\mathcal{L}_\mathcal{A}^\text{pre}$ and that $f$ is strictly compatible with the filtrations (and hence that the cokernel of $f$ lies in $W\mathcal{A}^\text{pre}_\text{uds}$ as required for the induction).
\end{itemize}
\end{subremark}

\subsection{Some remarks}

First, we add two lemmata whose statements we have mentioned but which we have not used in our proofs.

\begin{sublemma}
\namedlabel{lemma:5}{Lemma \thesubsubsection}
Let $\text{UDS}(X) \hookrightarrow X$ denote the universal destabilising subobject of $X \in \mathcal{A}$ and $X \twoheadrightarrow \text{UDQ}(X)$ the universal destabilising quotient. Write $\langle O\rangle_\mathcal{A}$ for the envelope of a collection of objects in $\mathcal{A}$, that is the smallest extension-stable full subcategory of $\mathcal{A}$ containing the objects of $O$. For $X \in \mathcal{A}$, it holds that
\begin{equation*}
X \in \left\langle\bigcup_{\lambda \leq \nu} \mathcal{A}(\lambda)\right\rangle_{\!\mathcal{A}} \iff \mu\!\left(\text{UDS}(X)\right) \leq \nu \iff F^{>\nu}X = 0
\end{equation*}
and
\begin{equation*}
X \in \left\langle\bigcup_{\lambda \geq \nu} \mathcal{A}(\lambda)\right\rangle_{\!\mathcal{A}} \iff \mu\!\left(\text{UDQ}(X)\right) \geq \nu \iff F^{\geq\nu}X = X.
\end{equation*}
\end{sublemma}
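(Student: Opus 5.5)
We prove the first chain of equivalences; the second follows by applying the first to $X^\circ \in \mathcal{A}^\circ$ with the slope function $-\mu$. Under this duality, $\mathcal{A}(\lambda)$ corresponds to $\mathcal{A}^\circ(-\lambda)$ and $\text{UDQ}(X)$ corresponds to $\text{UDS}(X^\circ)$. Moreover, the slope filtration of $X^\circ$ is $F^{\geq\nu}(X^\circ) = (X/F^{>-\nu}X)^\circ$, as recorded in the remark introducing universal destabilising quotients. So $F^{>-\nu'}X^\circ$-type conditions translate into $F^{\geq\nu}X = X$.

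\emph{Second equivalence.} We use the explicit description of the slope filtration in \ref{def:6}(4). Its highest break is $\lambda_0 = \mu(\text{UDS}(X))$, with $F^{\geq\lambda_0}X = \text{UDS}(X)$, and $F^{\geq\lambda}X = 0$ for $\lambda > \lambda_0$. Hence $F^{>\nu}X = \text{colim}_{\lambda>\nu}F^{\geq\lambda}X$ vanishes if and only if no break exceeds $\nu$, that is, if and only if $\lambda_0 \leq \nu$. For $X = 0$ all three conditions hold trivially.

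\emph{Condition $F^{>\nu}X = 0$ implies membership in the envelope.} If $F^{>\nu}X = 0$, then all breaks of the finite slope filtration are $\leq \nu$. The successive short exact sequences $0 \to F^{>\lambda}X \to F^{\geq\lambda}X \to \text{gr}^\lambda_{F}X \to 0$ have $\text{gr}^\lambda_F X \in \mathcal{A}(\lambda)$ with $\lambda \leq \nu$. Induction along the breaks exhibits $X$ as an iterated extension of such objects, so $X$ lies in the envelope.

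\emph{Membership in the envelope implies $\mu(\text{UDS}(X)) \leq \nu$.} Let $\mathcal{C}$ be the full subcategory of objects $X$ with $X = 0$ or $\mu(\text{UDS}(X)) \leq \nu$, equivalently, all of whose nonzero strict subobjects have slope $\leq \nu$. We show $\mathcal{C}$ contains each $\mathcal{A}(\lambda)$ with $\lambda \leq \nu$ and is closed under extensions; minimality of the envelope then gives the claim.
\begin{itemize}
\item[(i)] Containment is immediate: a semistable object of slope $\lambda$ has all nonzero subobjects of slope $\leq \lambda \leq \nu$.
\item[(ii)] For extension-closedness, take $0 \to X' \to X \stackrel{\pi}{\to} X'' \to 0$ with $X', X'' \in \mathcal{C}$ and a nonzero strict subobject $U \hookrightarrow X$. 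Then $U \cap X'$ is a strict subobject of $X'$. The quotient $U/(U \cap X')$ is the image $\pi(U)$; it admits a monic into $X''$, but need not be a strict subobject.
\end{itemize}

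The main obstacle is exactly this strictness issue in (ii). We handle it with \eqref{eq:dagger}: the epi-monic $U/(U\cap X') \to \pi(U)$ onto the strict image satisfies $\mu(U/(U\cap X')) \leq \mu(\pi(U)) \leq \nu$ whenever it is nonzero. Since also $\mu(U\cap X') \leq \nu$ if it is nonzero, the slope estimate along $0 \to U\cap X' \to U \to U/(U\cap X') \to 0$ (as in the proof of \ref{lemma:8}) gives $\mu(U) \leq \nu$. This closes the cycle of implications.
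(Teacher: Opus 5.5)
Your proof is correct and has the same outline as the paper's. The equivalence with $F^{>\nu}X = 0$ and the implication into the envelope are read off from the explicit slope filtration, the real content is extension-closedness of the condition $\mu(\text{UDS}(-)) \leq \nu$, and the second chain follows by duality.

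The only difference is in the extension step. The paper argues by contradiction: a $\text{UDS}(Y)$ of slope $>\nu$ must map to zero in the quotient, so it sits as a strict subobject of the kernel. You instead bound the slope of an arbitrary strict subobject $U$ directly, via $0 \to U \cap X' \to U \to U/(U \cap X') \to 0$ and $(\dagger')$ applied to the coimage--image map. Both arguments use only $(\dagger')$ and are equally short.
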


\begin{proof}
It remains to justify the respective first implications. To this end, we need to show that the property of an object of $\mathcal{A}$ that its subobjects respectively quotients have bounded slope is stable under extensions in $\mathcal{A}$. \\
The argument is similar to the proof of \cite[Lemma 1.3.7(7)]{ARTICLE:1}: Let $0 \to X \to Y \to Z \to 0$ be a short exact sequence in $\mathcal{A}$ where $\mu\!\left(\text{UDS}(X)\right) \leq \nu$ and $\mu\!\left(\text{UDS}(Z)\right) \leq \nu$. Assume by contradiction that the universal destablising subobject of $Y$ had slope $> \nu$. Then the composite $\text{UDS}(Y) \hookrightarrow Y \to Z$ would have image of slope $> \nu$, hence be zero. Consequently, $\text{UDS}(Y) \hookrightarrow Y$ would factor through $X \to Y$, exhibiting $\text{UDS}(Y)$ as a strict subobject of $X$, which is not possible. \\
Together with duality, concretely
\begin{equation*}
\left[\left\langle\bigcup_{\lambda \geq \nu} \mathcal{A}(\lambda)\right\rangle_{\!\mathcal{A}}\right]^\circ = \left\langle\bigcup_{-\lambda \leq -\nu} \mathcal{A}^\circ(-\lambda)\right\rangle_{\!\mathcal{A}^\circ},
\end{equation*}
this proves the claim.
\end{proof}

\begin{intermediate}
\ref{lemma:5} demonstrates how slopes control when nonzero morphisms between objects are admissible. Namely, it shows, again, that $\text{UDS}(X) \leq \nu$ holds if and only if $X$ does not admit a nonzero morphism from a semistable object of slope $> \nu$ and that $\text{UDQ}(X) \geq \nu$ holds if and only if $X$ does not admit a nonzero morphism to a semistable object of slope $< \nu$.
\end{intermediate}

\begin{sublemma}
\namedlabel{lemma:6}{Lemma \thesubsubsection}
$W\mathcal{A}^\text{pre}$, $W\mathcal{A}^\text{pre}_\text{uds}$, $W\mathcal{A}^\text{pre}_\text{udq}$ and $W\mathcal{A}$ are closed under extensions in $\text{Fun}(\Lambda,\mathcal{A})$.
\end{sublemma}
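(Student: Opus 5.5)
We have to show that each of $W\mathcal{A}^\text{pre}$, $W\mathcal{A}^\text{pre}_\text{uds}$, $W\mathcal{A}^\text{pre}_\text{udq}$ and $W\mathcal{A}$ is closed under extensions in $\text{Fun}(\Lambda,\mathcal{A})$. So fix a short exact sequence $0 \to M \to N \to P \to 0$ in $\text{Fun}(\Lambda,\mathcal{A})$ with $M,P$ in the subcategory under consideration. Short exact sequences in the functor category are exactly the pointwise short exact sequences, so for every $\lambda$ we have a short exact sequence $0 \to W_{\leq\lambda}M \to N_\lambda \to W_{\leq\lambda}P \to 0$ in $\mathcal{A}$. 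Write $N' \defeq \text{colim}\, N_\lambda$. Since $M$ and $P$ have finitely many breaks, $N$ is constant on each interval between consecutive elements of the union of the break sets. Hence $N'$ exists: it equals $N_\lambda$ for $\lambda$ large. Likewise $N_\lambda = 0$ for $\lambda$ small, and right continuity holds. So separatedness, right continuity and finiteness come for free.

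\textbf{Step 1 ($W\mathcal{A}^\text{pre}$).} The only real point is that each transition map $N_\lambda \to N'$ is a strict monic. I would argue in the left abelian envelope $\mathcal{L}_\mathcal{A}$, using \ref{rmk:6}(4). The short exact sequences above stay exact in $\mathcal{L}_\mathcal{A}$, and we obtain a map of short exact sequences from $0\to W_{\leq\lambda}M\to N_\lambda\to W_{\leq\lambda}P\to 0$ to $0\to M\to N'\to P\to 0$. The outer vertical maps are strict monics in $\mathcal{A}$, hence monics in $\mathcal{L}_\mathcal{A}$ with cokernels $M/W_{\leq\lambda}M$ and $P/W_{\leq\lambda}P$ lying in $\mathcal{A}$. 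The snake lemma in $\mathcal{L}_\mathcal{A}$ then shows that $N_\lambda \to N'$ is monic with cokernel $C$ sitting in a short exact sequence $0 \to M/W_{\leq\lambda}M \to C \to P/W_{\leq\lambda}P \to 0$. Since $\mathcal{A}$ is extension-closed in $\mathcal{L}_\mathcal{A}$, $C \in \mathcal{A}$. Therefore $N_\lambda \to N'$ is the kernel in $\mathcal{A}$ of $N' \to C$, that is, a strict monic.

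\textbf{Step 2 (graded pieces).} The same snake-lemma argument, applied to the map of sequences indexed by $<\lambda$ and $\leq\lambda$, yields a short exact sequence $0 \to \text{gr}^W_\lambda M \to \text{gr}^W_\lambda N \to \text{gr}^W_\lambda P \to 0$ in $\mathcal{A}$ for every $\lambda$. Again exactness is checked in $\mathcal{L}_\mathcal{A}$, and all terms lie in $\mathcal{A}$.

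\textbf{Step 3 (slope conditions).} By \ref{lemma:5}, the condition $\mu(\text{UDS}(X)) \leq \lambda$ is equivalent to membership of $X$ in an extension-closed envelope, so it is stable under extensions. Applying this to the sequence of Step 2 shows that $W\mathcal{A}^\text{pre}_\text{uds}$ is closed under extensions. The dual half of \ref{lemma:5} handles $W\mathcal{A}^\text{pre}_\text{udq}$. Finally, $W\mathcal{A}$ is the intersection of these two subcategories, so it is closed under extensions as well.

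The main obstacle is Step 1, namely showing that the colimit maps are strict monics in the quasi-abelian category. Passing to $\mathcal{L}_\mathcal{A}$ and using that $\mathcal{A}$ is extension-closed there resolves it.
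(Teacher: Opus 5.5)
Your proof is correct and follows essentially the same route as the paper. The snake lemma in the left abelian envelope $\mathcal{L}_\mathcal{A}$, combined with extension-closedness of $\mathcal{A}$ in $\mathcal{L}_\mathcal{A}$, gives strictness of the structure maps, and \ref{lemma:5} then handles the slope conditions on the graded pieces. The differences from the paper are cosmetic: you compare $N_\lambda$ directly with the colimit instead of two arbitrary indices $\lambda \leq \lambda_+$, and you get strictness by exhibiting $N_\lambda \to N'$ as a kernel in $\mathcal{A}$ instead of comparing cokernels and images. The one step you pass over with ``hence'' is that $N$ is constant on intervals; this needs the five lemma in $\mathcal{L}_\mathcal{A}$, which is the same snake-lemma argument you use in Step 1.
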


\begin{proof}
Let $0 \to M \to N \to P \to 0$ be a short exact sequence in $\text{Fun}(\Lambda,\mathcal{A})$ where $M, P \in W\mathcal{A}^\text{pre}$. For $\lambda \leq \lambda_+$, we have the following commutative diagram in $\mathcal{A}$ with exact rows:
\[\begin{tikzcd}[column sep=small]
0 & {W_{\leq\lambda}M} & {N(\lambda)} & {W_{\leq\lambda}P} & 0 \\
0 & {W_{\leq\lambda_+}M} & {N\!\left(\lambda_+\right)} & {W_{\leq\lambda_+}P} & 0
\arrow[from=1-1, to=1-2]
\arrow[from=1-2, to=1-3]
\arrow[hook, from=1-2, to=2-2]
\arrow[from=1-3, to=1-4]
\arrow["\alpha", from=1-3, to=2-3]
\arrow[from=1-4, to=1-5]
\arrow[hook, from=1-4, to=2-4]
\arrow[from=2-1, to=2-2]
\arrow[from=2-2, to=2-3]
\arrow[from=2-3, to=2-4]
\arrow[from=2-4, to=2-5]
\end{tikzcd}\]
The snake lemma in the left abelian envelope $\mathcal{L}_\mathcal{A}$ of $\mathcal{A}$ shows that $\text{Ker}_\mathcal{A}\alpha = \text{Ker}_{\mathcal{L}_\mathcal{A}}\alpha = 0$, so $\alpha$ is monic in $\mathcal{A}$, and it further yields a short exact sequence
\begin{equation*}
0 \to \text{gr}^W_{\lambda_+}M \to \text{Coker}_{\mathcal{L}_\mathcal{A}}\alpha \to \text{gr}^W_{\lambda_+}P \to 0
\end{equation*}
in $\mathcal{L}_\mathcal{A}$. Since $\mathcal{A}$ is closed under extensions in $\mathcal{L}_\mathcal{A}$, it follows that $\text{Coker}_{\mathcal{L}_\mathcal{A}}\alpha \in \mathcal{A}$ and that hence $\text{Coker}_{\mathcal{L}_\mathcal{A}}\alpha \to \text{Coker}_\mathcal{A}\alpha$ is an isomorphism. This implies that $\alpha$ is strict in $\mathcal{A}$: From the five lemma in $\mathcal{L}_\mathcal{A}$, one deduces that $\text{Im}_{\mathcal{L}_\mathcal{A}}\alpha \to \text{Im}_\mathcal{A}\alpha$ is an isomorphism, and it anyway holds that $\text{Im}_{\mathcal{L}_\mathcal{A}}\alpha = \text{Coim}_\mathcal{A}\alpha$. In summary, $\alpha$ is strict monic. \\
Moreover, $\text{Coker}~\alpha = 0$ whenever $\text{Coker}\!\left(W_{\leq\lambda}M \hookrightarrow W_{\leq\lambda_+}M\right) = \text{Coker}\!\left(W_{\leq\lambda}P \hookrightarrow W_{\leq\lambda_+}P\right) = 0$, and $N(\lambda) = 0$ holds for sufficiently small $\lambda$. Therefore, $N \in W\mathcal{A}^\text{pre}$ (and this conclusion remains valid for arbitrary quasi-abelian $\mathcal{A}$). \\
With this, extension-closedness of $W\mathcal{A}^\text{pre}_\text{uds}$, $W\mathcal{A}^\text{pre}_\text{udq}$ and consequently of $W\mathcal{A}$ in $\text{Fun}(\Lambda,\mathcal{A})$ follows from \ref{lemma:5}.
\end{proof}

\begin{intermediate}
Next, we comment on the existence and uniqueness of weight filtrations. Then, to conclude this section, we discuss a link to Bondarko's weight structures. \\

From the characterisation of exact slope filtrations \cite[Theorem 1.5.9]{ARTICLE:1}, we will use the implication $(1) \implies (4)$, and the converse in the case that the quasi-abelian category $\mathcal{A}$ is abelian.
\end{intermediate}

\begin{subremark}
\namedlabel{rmk:2}{Remark \thesubsubsection}
$\text{}$
\begin{itemize}
\item[(1)] The example in \ref{rmk:4}(1) can also be read as a short exact sequence in the category $\mathcal{B}_2$, the quasi-abelian category of objects of $\mathcal{B}$ equipped with two descending filtrations, which we will endow with a slope function in \ref{ex:5}. In fact, it is then a short exact sequence of semistable objects of strictly increasing slope. Other instances of this phenomenon are \ref{rmk:3}(1) as well as \cite[(1.11) in Examples 1.5.3(1)]{ARTICLE:1}. \\
Whenever a semistable object is the extension of semistable objects of different slopes, we can put at least two different weight filtrations on it. Therefore, the above examples show that it is not always the case that the weight filtration is unique in the sense that, up to isomorphism, there is at most one filtration $W$ on any given $M \in \mathcal{A}$ such that $(M,W) \in W\mathcal{A}$.
\item[(2)] At least if the slope filtration is strongly exact, in that the functors $F^{\geq\lambda}_\mu: \mathcal{A} \to \mathcal{A}$ are strongly exact for all $\lambda \in \Lambda$ (which is restrictive, note (4)), then the weight filtration is unique. Indeed, consider $(M,W), \left(M,W'\right) \in W\mathcal{A}$. Let $\nu$ be the smallest break of the filtration $W$ and let $\lambda$ be the smallest break of $W'$ such that $W_{\leq\nu}M \hookrightarrow M$ factors through $W'_{\leq\lambda}M \hookrightarrow M$. Then there is a nonzero morphism $W_{\leq\nu}M \to \text{gr}^{W'}_\lambda M$, so strong exactness of the slope filtration implies that $\nu = \lambda$ (because the image of this morphism is at the same time of slope $\nu$ and of slope $\lambda$, compare \cite[Theorem 1.5.9, (1) $\implies$ (4)]{ARTICLE:1}). Interchanging the roles of $W$ and $W'$, we get that their smallest filtration steps coincide. Hence we can proceed with $\left(M/W_{\leq\nu}M,W\right), \left(M/W_{\leq\nu}M,W'\right) \in W\mathcal{A}$ and inductively, we find that there is an isomorphism $(M,W) \cong (M,W')$ in $W\mathcal{A}$ induced by the identity of $M$.
\item[(3)] The argument from (2) applied to a morphism $f: M \to M'$ in $\mathcal{A}$ in place of the identity $M \to M$ shows that if the slope filtration on $\mathcal{A}$ is strongly exact and $M$, $M'$ admit a weight filtration, then $f$ respects the weight filtrations. In this situation, the canonical functor $W\mathcal{A} \to \mathcal{A}$ is fully faithful and an equivalence onto the full subcategory $\mathcal{A}' \subset \mathcal{A}$ of objects of $\mathcal{A}$ admitting a weight filtration. In particular, $\mathcal{A}'$ is abelian and closed under kernels and cokernels in $\mathcal{A}$. On $\mathcal{A}'$, there is a functorial weight filtration $W: \mathcal{A}' \to \text{Fun}(\Lambda,\mathcal{A})$. \\
The slope filtration on $\mathcal{A}$ restricts to a slope filtration on $\mathcal{A}'$, which is split by $W$ (see (5)). The increasing family of full subcategories $\mathcal{A}'_{\leq\lambda} \subset \mathcal{A}'$ consisting of the objects $A \in \mathcal{A}'$ with $W_{\leq\lambda}A = A$ forms a weight filtration of $\mathcal{A}'$ in the sense of \cite[Definition 2.1.1]{ARTICLE:7}.
\item[(4)] Due to condition \eqref{eq:dagger} on the slope function, a strongly exact slope filtration forces the quasi-abelian category $\mathcal{A}$ to be abelian. Namely, for every epi-monic $f$ in $\mathcal{A}$ and all $\lambda \in \Lambda$, $\text{gr}_F^\lambda f$ is then strict epi and $F^{\geq\lambda}f$ is monic, so by descending induction on $\lambda$, one concludes under use of the snake lemma in the left abelian envelope of $\mathcal{A}$ that $f$ is an isomorphism.
\item[(5)] If $\mathcal{A}$ is abelian and the slope filtration is (strongly) exact, any weight filtration on $X \in \mathcal{A}$ is opposed to and consequently splits the slope filtration on $X$ in the sense of and according to \cite[Proposition 1.2.5(ii), 1.2.6]{ARTICLE:2}, refer to \ref{ex:4}. In this situation, a weight filtration, by (3) automatically functorial, can only exist on every object of $\mathcal{A}$ if the slope filtration is \textit{split} \cite[Definition 1.5.11]{ARTICLE:1}, that is if $\text{gr} \cong \text{id}$ as functors. \\
For instance, a mixed Hodge structure which arises as a non-split extension of pure Hodge structures does not admit a weight filtration with respect to the slope function on the category of mixed Hodge structures from \cite[Example 1.5.7]{ARTICLE:1}. An example can be constructed from \ref{rmk:4}(1).
\item[(6)] We elaborate on the condition of being split: In the case that $\mathcal{A}$ is abelian, for every slope function $\mu$ on $\mathcal{A}$, $-\mu$ is again a slope function. It holds that the slope filtration corresponding to $\mu$ splits if and only if the slope filtration corresponding to $-\mu$ splits if and only if both the slope filtrations for $\mu$ and for $-\mu$ are exact. Namely, the slope filtration for $\mu$ is exact if and only if semistability with respect to $\mu$ implies semistability with respect to $-\mu$. \\
Whenever the slope filtration for $-\mu$ is exact, by reindexing $\lambda \to - \lambda$, one obtains a functorial weight filtration for $\mu$. If also $\mu$ is exact, the latter filtration is opposed to and functorially splits the slope filtration for $\mu$. \\
Conversely, if the slope filtration for $\mu$ splits, it is in particular exact, hence the graded pieces are also semistable with respect to $-\mu$. From the functorial splitting, one builds the slope filtration for $-\mu$, which is again exact.
\end{itemize}
\end{subremark}

\begin{subremark}
Let $\mathcal{A}$ be an abelian category equipped with a $\mathbb{Z}$-valued slope function $\mu$ and consider the bounded derived categories $\mathcal{D}^b(\mathcal{A})$ and $\mathcal{D}^b\!\left(W\mathcal{A}\right)$ with their canonical $t$-structures. We use the homological convention. Let $\mathcal{D}$ be either $\mathcal{D}^b(\mathcal{A})$ or $\mathcal{D}^b\!\left(W\mathcal{A}\right)$. \\
Here, we relate weight filtrations in our sense to weight filtrations coming from transversal weight structures on $\mathcal{D}$ in the sense of \cite{ARTICLE:7}. On the one hand, we will see in (1) that for $\mathcal{D} = \mathcal{D}^b(\mathcal{A})$, the interface is narrow. On the other hand, we will demonstrate in (2) and (3) that under less restrictive conditions, it is possible to reconstruct the filtration $W$ from a transversal weight structure on $\mathcal{D} = \mathcal{D}^b\!\left(W\mathcal{A}\right)$. \\
The picture could be different for bounded weight structures on $\mathcal{D}$ which, rather than being transversal to the $t$-structure, satisfy the weaker condition that the corresponding weight spectral sequence degenerates \cite[Section 3.2]{ARTICLE:7}. We have not investigated this.
\begin{itemize}
\item[(1)] Suppose that a weight filtration on $\mathcal{A}$ with respect to the slope function $\mu$ is obtained as in \cite[Theorem 1.2.1]{ARTICLE:7} from a \textit{bounded weight structure} on $\mathcal{D} = \mathcal{D}^b(\mathcal{A})$ \cite[Notation]{ARTICLE:7} which is \textit{transversal} \cite[Definition 1.2.2]{ARTICLE:7} to the $t$-structure. That is, suppose there exists is a bounded weight structure $\left(\mathcal{D}_{w \leq 0},\mathcal{D}_{w \geq 0}\right)$ on $\mathcal{D}^b(\mathcal{A})$ transversal to the $t$-structure such that $\mathcal{D}^{t = 0} \cap \mathcal{D}_{w = 0}[\lambda] \simeq \mathcal{A}(\lambda)$. Then the family of subcategories $\left(\mathcal{A}(\lambda)\right)_\lambda$ is a \textit{semi-orthogonal family} \cite[Definition 1.1.4]{ARTICLE:7} in $\mathcal{D}^b(\mathcal{A})$. We claim that this implies that the abelian category $\mathcal{A}$ is \textit{semisimple}, in that every short exact sequence in $\mathcal{A}$ splits. \\
If $\left(\mathcal{A}(\lambda)\right)_\lambda$ is semi-orthogonal in $\mathcal{D}^b(\mathcal{A})$, we have $\text{Hom}_\mathcal{A}(A,B) = 0$ for $A \in \mathcal{A}(\lambda)$, $B \in \mathcal{A}(\nu)$ whenever $\nu > \lambda$ and $\text{Ext}_\mathcal{A}^1(A,B) = 0$ whenever $\nu \geq \lambda$. As a consequence, the slope filtration is exact and on every object of $\mathcal{A}$, it splits. Hence by \ref{rmk:2}(3), there is a unique functorial weight filtration on $\mathcal{A}$. Due to exactness of the slope filtration, this weight filtration is opposed to and splits the slope filtration in fact functorially (\ref{rmk:2}(5)). But then $\mathcal{A}$ is semisimple, as all categories $\mathcal{A}(\lambda)$ are. \\
Put differently, since $\mu$ is exact, the functorial weight filtration obtained from \cite[Theorem 1.2.1(iii)]{ARTICLE:7} is the negative of the slope filtration corresponding to $-\mu$ on $\mathcal{A}$. As this filtration is exact by \cite[Proposition 1.2.4]{ARTICLE:7}, \ref{rmk:2}(6) applies.
\end{itemize}
Instead, the construction of transversal weight structures allows to recover certain exact slope filtrations on, not necessarily semisimple, abelian categories. To demonstrate this, consider the condition:
\begin{equation}
\setlength{\jot}{0pt}
\label{eq:lozenge}
\tag{$\lozenge$}
\mu \text{ is exact and  } \mathcal{A}(\lambda) \text{ is semisimple for all } \lambda.
\end{equation}
This is a weaker condition than semi-orthogonality of $\left(\mathcal{A}(\lambda)\right)_\lambda$ in (1) (in fact strictly weaker, as one gets an example from \ref{rmk:2}(5)). It follows from \cite[Proposition 2.1.3]{ARTICLE:7} that \eqref{eq:lozenge} is equivalent to semi-orthogonality of the family $\left(\mathcal{A}(-\lambda)\right)_\lambda$. This opens up yet another perspective on the argument in (1): As soon as $\left(\mathcal{A}(\lambda)\right)_\lambda$ is semi-orthogonal, so is $\left(\mathcal{A}(-\lambda)\right)_\lambda$, and semisimplicity of $\mathcal{A}$ in this case is a direct consequence.
\begin{itemize}
\item[(2)] Define $\mathcal{D}_{s \leq 0} \defeq \left\langle\bigcup_n\bigcup_{\lambda \geq n} \mathcal{A}(\lambda)[n]\right\rangle_{\mathcal{D}^b(\mathcal{A})}$ and $\mathcal{D}_{s \geq 0} \defeq \left\langle\bigcup_n\bigcup_{\lambda \leq n} \mathcal{A}(\lambda)[n]\right\rangle_{\mathcal{D}^b(\mathcal{A})}$. Assuming \eqref{eq:lozenge}, one reads off from the proof of \cite[Theorem 1.2.1]{ARTICLE:7} that the pair $\left(\mathcal{D}_{s \leq 0},\mathcal{D}_{s \geq 0}\right)$ is a bounded weight structure on $\mathcal{D}^b(\mathcal{A})$ which is transversal to the $t$-structure. \\
If we put $\mathcal{D}_{s \leq \lambda} \defeq \mathcal{D}_{s\leq 0}[\lambda]$, $\mathcal{D}_{s \geq \lambda} \defeq \mathcal{D}_{s \geq 0}[\lambda]$ and $\mathcal{D}_{s = \lambda} \defeq \mathcal{D}_{s \leq \lambda} \cap \mathcal{D}_{s \geq \lambda}$, it holds that $\mathcal{D}^{t = 0} \cap \mathcal{D}_{s = \lambda} \simeq \mathcal{A}(-\lambda)$, and the resulting functorial filtration on $\mathcal{D}^{t = 0} \simeq \mathcal{A}$ from \cite[Theorem 1.2.1(iii),(iii')]{ARTICLE:7} is the negative of the slope filtration on $\mathcal{A}$. \\
Explicitly, one has
\begin{equation*}
\mathcal{D}_{s \leq 0} = \left\{X \in \mathcal{D}^b(\mathcal{A}) \mid \forall n \in \mathbb{Z}: H_nX \in \left\langle\bigcup_{\lambda \geq n} \mathcal{A}(\lambda)\right\rangle_{\!\mathcal{A}}\right\}
\end{equation*}
and
\begin{equation*}
\mathcal{D}_{s \geq 0} = \left\{X \in \mathcal{D}^b(\mathcal{A}) \mid \forall n \in \mathbb{Z}: H_nX \in \left\langle\bigcup_{\lambda \leq n} \mathcal{A}(\lambda)\right\rangle_{\!\mathcal{A}}\right\}.
\end{equation*}
To get containment of the respective left-hand in the respective right-hand side, observe that exactness of the slope filtration is equivalent to $\left\langle\bigcup_{\lambda \geq n} \mathcal{A}(\lambda)\right\rangle_{\!\mathcal{A}}$ and $\left\langle\bigcup_{\lambda \leq n} \mathcal{A}(\lambda)\right\rangle_{\!\mathcal{A}}$ being closed under both subobjects and quotients in $\mathcal{A}$, and employ the long exact homology sequences. For the reverse inclusions, do induction on the number of nonzero homology groups and use the distinguished triangles for the canonical truncations. \\
\ref{lemma:5} explains the shape of this weight structure and shows that \cite[Proposition 2.1.3]{ARTICLE:7} is applicable. Namely, $\mathcal{D}_{s \leq \lambda} \cap \mathcal{D}^{t = 0} \simeq \left\{X \in \mathcal{A} \mid \mu\!\left(\text{UDQ}(X)\right) \geq -\lambda\right\} \eqdef \mathcal{A}_{\leq \lambda}$ as well as $\mathcal{D}_{s \geq \lambda} \cap \mathcal{D}^{t = 0} \simeq \left\{X \in \mathcal{A} \mid \mu\!\left(\text{UDS}(X)\right) \leq -\lambda\right\} \eqdef \mathcal{A}_{\geq \lambda}$ are abelian subcategories of $\mathcal{D}^{t = 0} \simeq \mathcal{A}$, closed under both subobjects and quotients. For every $\lambda$, $F^{\geq -\lambda}$ is an exact functor $\mathcal{A} \to \mathcal{A}_{\leq \lambda}$ which is right adjoint to the inclusion $\mathcal{A}_{\leq \lambda} \hookrightarrow \mathcal{A}$. Therefore, the sequence of subcategories $\mathcal{A}_{\leq \lambda} \subset \mathcal{A}$ defines a weight filtration of $\mathcal{A}$ in the sense of \cite[Definition 2.1.1]{ARTICLE:7}. Also the functor $\mathcal{A} \to \mathcal{A}_{\geq \lambda}$, $X \mapsto X/F^{> - \lambda}X$ is exact and it is left adjoint to the inclusion $\mathcal{A}_{\geq \lambda} \hookrightarrow \mathcal{A}$.
\item[(3)] Finally, let $\mathcal{A}$ be any quasi-abelian category with a $\mathbb{Z}$-valued slope function $\mu$ and suppose that the abelian categories $\mathcal{A}(\lambda)$ are semisimple. Then $\mu' \defeq (-\mu) \circ (W\mathcal{A} \to \mathcal{A})$ is an exact slope function on the abelian category $W\mathcal{A}$ (with corresponding slope filtration $F_{\mu'}^{\geq\lambda}(M,W) =\left(W_{\leq-\lambda}M, W_{\leq-\lambda}M \cap W\right)$, compare \cite[Example 1.5.7]{ARTICLE:1}) and the categories $\mathcal{A}'(\lambda)$ are again semisimple. If we perform the construction of the weight structure $\left(\mathcal{D}_{s \leq 0}, \mathcal{D}_{s \geq 0}\right)$ from (2) on $\mathcal{D} = \mathcal{D}^b\!\left(W\mathcal{A}\right)$ for the slope function $\mu'$ on $W\mathcal{A}$, the resulting functorial filtration on $W\mathcal{A}$ is $W_{\leq\lambda}(M,W) = \left(W_{\leq\lambda}M, W_{\leq\lambda}M \cap W\right)$. \\
The situation in (1) is a special case: When the slope filtration on $\mathcal{A}$ is split and hence $\mathcal{A}$ itself abelian semisimple, the equivalence $W\mathcal{A} \to \mathcal{A}$ from \ref{rmk:2}(3) takes the filtration $W$ on $W\mathcal{A}$ to the functorial weight filtration on $\mathcal{A}$.
\end{itemize}
\end{subremark}

\section{Examples}
\namedlabel{sec:4}{Section \thesection}

\subsection{(Abstract) Hodge structures}

Let $\mathcal{B}$ be an abelian category with a rank function $\text{rk}_{\mathcal{B}}$. We continue to assume that $\Lambda$ is a totally ordered, uniquely divisible abelian group.

\begin{subexample}
\namedlabel{ex:4}{Example \thesubsubsection}
Let $\mathcal{B}_1 \subset \text{Fun}\!\left(\Lambda^\circ,\mathcal{B}\right)$ be the full subcategory of descending $\Lambda$-indexed filtrations (\ref{def:6}(1)), in other terms, $\mathcal{B}_1$ is the category of pairs $(M,F)$, where $F = F^{\geq(-)}M$ is a descending $\Lambda$-indexed filtration on $M \in \mathcal{B}$, and whose morphisms are the morphisms in $\mathcal{B}$ respecting the filtrations. The equivalence $\Lambda^\circ \simeq \Lambda$, $\lambda \mapsto -\lambda$ induces an equivalence $\mathcal{B}_1 \simeq W\mathcal{B}^\text{pre}$, which shows that $\mathcal{B}_1$ is a quasi-abelian category (\ref{prop:1}, \ref{ex:8}). As we fix signs in the following, the sense of the filtrations will however be relevant.  \\
Define $\text{rk}(M,F) \defeq \text{rk}_{\mathcal{B}}(M)$ and $\text{deg}(M,F) \defeq \sum_{\nu \in \Lambda} \text{rk}_{\mathcal{B}}\!\left(\text{gr}^\nu_FM\right)\nu$, note that $\text{gr}^\nu_FM$ is nonzero only for finitely many $\nu \in \Lambda$. Then:
\begin{itemize}
\item[(1)] rk is a rank function and $\mu \defeq \text{deg}/\text{rk}$ a $\Lambda$-valued slope function on $\mathcal{B}_1$ which satisfies \eqref{eq:dagger}.
\item[(2)] The corresponding slope filtration is $F_\mu^{\geq\lambda}\!\left(M,F^{\geq(-)}M\right) = \left(F^{\geq\lambda}M,F^{\geq(-)}M \cap F^{\geq\lambda}M\right)$. We say that $F^{\geq(-)}M \cap F^{\geq\lambda}M$ is the filtration induced by $F$ on $F^{\geq\lambda}M$.
\item[(3)] For $\lambda \in \Lambda$, the category $\mathcal{B}_1(\lambda)$ is the category of $\Lambda$-filtered objects of $\mathcal{B}$ with precisely one break of the filtration at $\lambda$.
\item[(4)] $\left(\left(M,F^{\geq(-)}M\right),W_{\leq(-)}\!\left(M,F^{\geq(-)}M\right)\right) \in W\mathcal{B}_1^\text{pre}$ is in $W\mathcal{B}_1$ if and only the canonical morphisms $M \leftarrow \bigoplus_\nu \left(F^{\geq\nu}M \cap W_{\leq\nu}M\right) \to \bigoplus_\nu \text{gr}^W_\nu M$ are isomorphisms, thus giving rise to a \textit{splitting} of the filtration $W$. Here, we also let $W$ denote the filtration $\left(\mathcal{B}_1 \to \mathcal{B}\right) \circ W$ on $M \in \mathcal{B}$. \\
Equivalently, $((M,F),W)$ belongs to $W\mathcal{B}_1$ if and only if $W$ splits $F$, that is if and only if $M \leftarrow \bigoplus_\nu \left(F^{\geq\nu}M \cap W_{\leq\nu}M\right) \to \bigoplus_\nu \text{gr}^\nu_F M$ are isomorphisms.
\item[(5)] Every morphism in $W\mathcal{B}_1$ is strictly compatible with $W$. Its underlying morphism in $\mathcal{B}$ is strictly compatible with both the filtrations $F$ and $W$.
\item[(6)] For all $\nu, \nu' \in \Lambda$, the functors $\text{gr}_\nu^W: W\mathcal{B}_1 \to \mathcal{B}_1$ and $\text{gr}^\nu_F: W\mathcal{B}_1 \to W\mathcal{B}^\text{pre}$ as well as $\text{gr}_\nu^W, \text{gr}^\nu_F: W\mathcal{B}_1 \to \mathcal{B}$ and $\text{gr}^{\nu'}_F\text{gr}_\nu^W \cong \text{gr}_\nu^W\text{gr}^{\nu'}_F: W\mathcal{B}_1 \to \mathcal{B}$ are exact.
\end{itemize}
\end{subexample}

\begin{proof}
Knowing exactness of $\mathcal{B}_1 \hookrightarrow \text{Fun}\!\left(\Lambda^\circ,\mathcal{B}\right)$ when $\mathcal{B}$ is abelian, for assertion (1), one is left to show that if $(M,F) \to (M',F')$ is an epi-monic in $\mathcal{B}_1$ which is not an isomorphism, then $\mu(M,F) < \mu(M',F')$. In this situation, $M \to M'$ is an isomorphism in $\mathcal{B}$, and we may assume without loss of generality that it is the identity of $M$. \\
For a morphism $\iota: (M,F) \to (M,F')$ in $\mathcal{B}_1$ which in $\mathcal{B}$ is the identity, let $\Lambda(\iota)$ be the finite set consisting of all elements $\lambda \in \Lambda$ such that $F^{\geq\lambda}\iota$ is not an isomorphism. We prove by induction on the cardinality $n \geq 0$ of $\Lambda(\iota)$ that either $\iota$ is an isomorphism or $\text{deg}(M,F) < \text{deg}(M,F')$. The induction start is trivial. In the induction step, suppose $n > 0$ and let $\lambda_0$ be the least element of the finite chain $\Lambda(\iota)$. From the filtrations $F$ and $F'$, one builds a filtration $\tilde{F}$ on $M$ with $\tilde{F}^{\geq\lambda}M = F'^{\geq\lambda}M$ when $\lambda \leq \lambda_0$ and $\tilde{F}^{\geq\lambda}M = F^{\geq\lambda}M$ when $\lambda > \lambda_0$. For the resulting morphism $\tilde{\iota}: (M,\tilde{F}) \to (M,F')$, it holds that the set $\Lambda\!\left(\tilde{\iota}\right)$ has cardinality $n-1$, so the induction hypothesis applies. Let $\lambda_1$ be the largest break of the filtration $F$ with the property that $\lambda_1 < \lambda_0$, this exists by definition of $\lambda_0$. Then
\begin{equation*}
\text{deg}(M,\tilde{F}) = \text{deg}(M,F) + \left(\lambda_0-\lambda_1\right)\text{rk}_{\mathcal{A}}\!\left(\frac{F'^{\geq\lambda_0}M}{F^{\geq\lambda_0}M}\right) > \text{deg}(M,F).
\end{equation*}
As $\text{deg}(M,F') \geq \text{deg}(M,\tilde{F})$ by induction, we conclude. \\
Fix $\lambda \in \Lambda$. It is clear that if $0 \neq (M,F) \in \mathcal{B}_1$ is such that the filtration has precisely one break at $\lambda$, then $(M,F)$ is semistable of slope $\lambda$. Conversely, let $(M,F)$ be semistable of slope $\lambda$ and suppose that $\text{gr}^\nu_FM \neq 0$ for some $\nu < \lambda$. Consider the strict subobject $\left(F^{>\nu}M,F'\right) \hookrightarrow (M,F)$ of $M$, where $F'$ is the filtration induced by $F$ on $F^{>\nu}M$. Then $F^{>\nu}M \neq 0$, else $\mu(M,F) < \lambda$, and since addition is compatible with the total order on $\Lambda$, one has
\begin{equation*}
\mu\!\left(F^{>\nu}M,F'\right) = \frac{\text{deg}(M,F) - \sum_{\nu' \leq \nu} \text{rk}_\mathcal{B}\!\left(\text{gr}^{\nu'}_FM\right)\nu'}{\text{rk}_\mathcal{B}(M) - \sum_{\nu' \leq \nu} \text{rk}_\mathcal{B}\!\left(\text{gr}^{\nu'}_FM\right)} > \frac{\lambda\left(\text{rk}_\mathcal{B}(M) - \sum_{\nu' \leq \nu} \text{rk}_\mathcal{B}\!\left(\text{gr}^{\nu'}_FM\right)\right)}{\text{rk}_\mathcal{B}(M) - \sum_{\nu' \leq \nu} \text{rk}_\mathcal{B}\!\left(\text{gr}^{\nu'}_FM\right)} = \lambda,
\end{equation*}
contradicting semistability of $(M,F)$. Hence $\text{gr}^\nu_FM = 0$ must hold for all $\nu < \lambda$, and as then $\lambda\sum_{\nu \geq \lambda} \text{rk}_\mathcal{B}\!\left(\text{gr}^\nu_FM\right) = \text{deg}(M,F) = \sum_{\nu \geq \lambda} \text{rk}_\mathcal{B}\!\left(\text{gr}^\nu_FM\right)\nu$, necessarily also $\text{gr}^\nu_FM = 0$ for all $\nu > \lambda$. The claim (3) about $\mathcal{B}_1(\lambda)$ follows. \\
Now one finds that for all $\lambda \in \Lambda$,
\begin{equation*}
\text{gr}^\lambda_{F_\mu}(M,F) = \left(\text{gr}^\lambda_FM, \frac{F^{\geq(-)}M \cap F^{\geq\lambda}M }{F^{\geq(-)}M \cap F^{>\lambda}M}\right)
\end{equation*}
is an element of $\mathcal{B}_1(\lambda)$. This settles (2), namely that $F_\mu$ is the slope filtration corresponding to the slope function $\mu$. \\
For $\left((M,F),W\right) \in W\mathcal{B}_1^\text{pre}$, it holds that $W_{\leq\lambda}(M,F) = \left(W_{\leq\lambda}M,F^{\geq(-)}M \cap W_{\leq\lambda}M\right)$, therefore $\left((M,F),W\right)$ is in $W\mathcal{B}_1$ if and only if
\begin{equation*}
\frac{F^{\geq\nu'}M \cap W_{\leq\nu}M}{\left(F^{\geq\nu'}M \cap W_{<\nu}M\right) + \left(F^{>\nu'}M \cap W_{\leq\nu}M\right)} = \text{gr}^{\nu'}_F\!\left(\text{gr}^W_\nu(M,F)\right) = \begin{cases} 0 & \nu' \neq \nu \\ \text{gr}^W_\nu M & \nu' = \nu \end{cases}
\end{equation*}
for all $\nu,\nu' \in \Lambda$. By \cite[1.2.6]{ARTICLE:2}, this is in turn equivalent to requiring that the canonical morphisms $M \leftarrow \bigoplus_\nu \left(F^{\geq\nu}M \cap W_{\leq\nu}M\right) \to \bigoplus_\nu \text{gr}^W_\nu M$ be isomorphisms, which proves (4). \\
According to item (0) of \ref{thm:1}, any morphism in $W\mathcal{B}_1$ is strictly compatible with the filtration $W$. In the case at hand, we alternatively see this from \cite[1.2.4.1, 1.2.4.2]{ARTICLE:2}, which moreover shows that any morphism in $\mathcal{B}$ underlying a morphism in $W\mathcal{B}_1$ is strictly compatible also with the filtration $F$. Namely, $\bigoplus_{\rho \geq \nu} \left(F^{\geq\rho}M \cap W_{\leq\rho}M\right) \to F^{\geq\nu}M$ is an isomorphism for $\left((M,F),W\right) \in W\mathcal{B}_1$ and all $\nu \in \Lambda$. Hence we get (5). \\
Finally, we have $\text{gr}^{\nu'}_F\text{gr}^W_\nu = 0$ on $W\mathcal{B}_1$ when $\nu' \neq \nu$, while $\text{gr}^\nu_F\text{gr}^W_\nu \cong \text{gr}^W_\nu$ as functors $W\mathcal{B}_1 \to \mathcal{B}$. All assertions of (6) follow from this. Exactness of $\text{gr}_\nu^W: W\mathcal{B}_1 \to \mathcal{B}_1$ is alternatively item (2) of \ref{thm:1}.
\end{proof}

\begin{intermediate}
In \ref{ex:6} of the appendix, we discuss the case that $\mathcal{B}$ has the structure of a tannakian category.
\end{intermediate}

\begin{subremark}
Let $\mathcal{B}_1$ and $\mu$ on $\mathcal{B}_1$ be as in \ref{ex:4}. Assume in addition that $\mathcal{B}$ is equipped with a slope function $\mu_\mathcal{B}$ for the rank function $\text{rk}_\mathcal{B}$ and let $F_{\mu_\mathcal{B}}$ be the corresponding slope filtration. Then $\mu\!\left(M,F_{\mu_\mathcal{B}}\right) = \mu_\mathcal{B}(M)$ and for $\lambda \in \Lambda$, it holds that $\left(M,F_{\mu_\mathcal{B}}\right) \in \mathcal{B}_1(\lambda)$ if and only if $M \in \mathcal{B}(\lambda)$.
\end{subremark}

\begin{subexample}
\namedlabel{ex:5}{Example \thesubsubsection}
Let $\mathcal{B}_2$ denote the category of triples $(M,F,\bar{F})$, where $F$ and $\bar{F}$ are descending $\Lambda$-indexed filtrations on $M \in \mathcal{B}$, and whose morphisms are the morphisms in $\mathcal{B}$ respecting the filtrations. There is a canonical equivalence between $\mathcal{B}_2$ and the category $\left(\mathcal{B}_1\right)_1$ of $\Lambda$-filtered objects of $\mathcal{B}_1$, so $\mathcal{B}_2$ is quasi-abelian (again \ref{prop:1}, \ref{ex:8}). \\
Define $\text{rk}\!\left(M,F,\bar{F}\right) \defeq \text{rk}_{\mathcal{B}}(M)$ and $\text{deg}\!\left(M,F,\bar{F}\right) \defeq \sum_{\nu \in \Lambda} \text{rk}_{\mathcal{B}}\!\left(\text{gr}^\nu_FM \oplus \text{gr}^\nu_{\bar{F}}M\right)\nu$. Then:
\begin{itemize}
\item[(1)] rk is a rank function and $\mu \defeq \text{deg}/\text{rk}$ a $\Lambda$-valued slope function on $\mathcal{B}_2$ which satisfies \eqref{eq:dagger}. In $\left(\mathcal{B}_1\right)_1$, one computes $\text{gr}^{\bar{\nu}}_{\bar{F}}\text{gr}^\nu_FM$, and it holds that
\begin{equation*}
\text{deg}\!\left(M,F,\bar{F}\right) = \sum_{\bar{\nu},\nu} \text{rk}_{\mathcal{B}}\!\left(\text{gr}^{\bar{\nu}}_{\bar{F}}\text{gr}^\nu_FM\right)\left(\bar{\nu}+\nu\right).
\end{equation*}
\item[(2)] The corresponding slope filtration is $F_\mu^{\geq\lambda}\!\left(M,F,\bar{F}\right) = \left(\sum_{\bar{\nu}+\nu = \lambda} \left(\bar{F}^{\geq\bar{\nu}}M \cap F^{\geq\nu}M\right),F',\bar{F}'\right)$, where $F'$, $\bar{F}'$ are the induced filtrations. 
\item[(3)] For $\lambda \in \Lambda$, $\mathcal{B}_2(\lambda)$ consists of the objects $\left(M,F,\bar{F}\right) \in \mathcal{B}_2$ for which $\text{gr}^{\bar{\nu}}_{\bar{F}}\text{gr}^\nu_FM = 0$ whenever $\bar{\nu} + \nu \neq \lambda$.
\item[(4)] The category $W\mathcal{B}_2$ is the category considered in \cite[Théorème 1.2.10]{ARTICLE:2}.
\item[(5)] Every morphism $f$ in $W\mathcal{B}_2$ is strictly compatible with the filtration $W$. For all $\lambda \in \Lambda$, the morphisms in $\mathcal{B}$ underlying $W_{\leq\lambda}f$ and $\text{gr}^W_\lambda f$ are strictly compatible with the filtrations induced by $F$ and $\bar{F}$.
\item[(6)] For all $\nu,\nu' \in \Lambda$, the functors $\text{gr}^W_\nu: W\mathcal{B}_2 \to \mathcal{B}_2$ and $\text{gr}^{\nu'}_F\text{gr}_\nu^W \cong \text{gr}_\nu^W\text{gr}^{\nu'}_F: W\mathcal{B}_2 \to \mathcal{B}_1$, $\text{gr}^{\nu'}_{\bar{F}}\text{gr}_\nu^W \cong \text{gr}_\nu^W\text{gr}^{\nu'}_{\bar{F}}: W\mathcal{B}_2 \to \mathcal{B}_1$ as well as the functors $\text{gr}_\nu^W, \text{gr}^\nu_F, \text{gr}^\nu_{\bar{F}}: W\mathcal{B}_2 \to \mathcal{B}$ and
$\text{gr}_{\bar{F}}\text{gr}^{\nu'}_F\text{gr}_\nu^W \cong \text{gr}_F\text{gr}^{\nu-\nu'}_{\bar{F}}\text{gr}_\nu^W: W\mathcal{B}_2 \to \mathcal{B}$ are exact. Here, $\text{gr}_{\bar{F}} = \bigoplus_{\bar{\nu}} \text{gr}^{\bar{\nu}}_{\bar{F}}$.
\end{itemize}
\end{subexample}

\begin{subremark}
The example given in \ref{rmk:4}(1) shows that the functors $\text{gr}^\nu_F, \text{gr}^\nu_{\bar{F}}: \mathcal{B}_2 \to \mathcal{B}_1$ and $\text{gr}^{\bar{\nu}}_{\bar{F}}\text{gr}^\nu_F \cong \text{gr}^\nu_F\text{gr}^{\bar{\nu}}_{\bar{F}}: \mathcal{B}_2 \to \mathcal{B}$ are not exact in general, and that $F^{\geq\lambda}_\mu$ on $\mathcal{B}_2$ is not exact.
\end{subremark}

\begin{subdefinition}
\namedlabel{def:4}{Definition \thesubsubsection}
In line with \ref{def:1} and motivated by \cite[Théorème 1.2.10]{ARTICLE:2}, we say that the objects of $W\mathcal{B}_2$ are \textit{abstract mixed Hodge structures}.
\end{subdefinition}

\begin{proof}[Proof of the claims of \ref{ex:5}]
Let $f: (M,F,\bar{F}) \to (M',F',\bar{F}')$ be an epi-monic in $\mathcal{B}_2$. As in \ref{ex:4}, $f: M \to M'$ is an isomorphism in $\mathcal{B}$, and for the purpose of the proof of (1), we may again assume that it is the identity of $M$. In $\mathcal{B}_2$, we can thus factor $f$ into epi-monics $(M,F,\bar{F}) \to (M,F',\bar{F}) \to (M,F',\bar{F}')$. Applying the argument from the proof of \ref{ex:4}(1) to both morphisms in the factorisation, we get that $\mu$ is a $\Lambda$-valued slope function on $\mathcal{B}_2$ which satisfies \eqref{eq:dagger}. \\
Furthermore, one computes
\begin{equation*}
\begin{split}
\sum_{\nu \in \Lambda} \text{rk}_{\mathcal{B}}\!\left(\text{gr}^\nu_FM \oplus \text{gr}^\nu_{\bar{F}}M\right)\nu &= \left(\sum_{\bar{\nu} \in \Lambda} \text{rk}_{\mathcal{B}}\!\left(\text{gr}^{\bar{\nu}}_{\bar{F}}M\right)\bar{\nu}\right) + \left(\sum_{\nu \in \Lambda} \text{rk}_{\mathcal{B}}\!\left(\text{gr}^\nu_FM\right)\nu\right) \\
&= \sum_{\bar{\nu},\nu} \left(\text{rk}_{\mathcal{B}}\!\left(\text{gr}^\nu_F\text{gr}^{\bar{\nu}}_{\bar{F}}M\right)\bar{\nu} + \text{rk}_{\mathcal{B}}\!\left(\text{gr}^{\bar{\nu}}_{\bar{F}}\text{gr}^\nu_FM\right)\nu\right) \\
&= \sum_{\bar{\nu},\nu} \text{rk}_{\mathcal{B}}\!\left(\text{gr}^{\bar{\nu}}_{\bar{F}}\text{gr}^\nu_FM\right)\left(\bar{\nu}+\nu\right),
\end{split}
\end{equation*}
which completes the demonstration of (1). \\
By this formula, any $0 \neq (M,F,\bar{F}) \in \mathcal{B}_2$ with the property that $\text{gr}^{\bar{\nu}}_{\bar{F}}\text{gr}^\nu_FM = 0$ whenever $\bar{\nu} + \nu \neq \lambda$ has slope $\lambda$. For such a triple, it holds that $M \cong \bigoplus_{\bar{\nu}+\nu = \lambda} \text{gr}^{\bar{\nu}}_{\bar{F}}\text{gr}^\nu_FM$  in $\mathcal{B}$ according to \cite[1.2.6]{ARTICLE:2}. As this is also an isomorphism in $\mathcal{B}_2$ and as each summand is semistable, we deduce \cite[Lemma 1.3.7(7)]{ARTICLE:1} that also $(M,F,\bar{F})$ is semistable. \\
Conversely, let $(M,F,\bar{F})$ be semistable of slope $\lambda$. As in \ref{ex:4}, in order to see that $\text{gr}^{\bar{\nu}}_{\bar{F}}\text{gr}^\nu_FM = 0$ holds whenever $\bar{\nu} + \nu \neq \lambda$, it suffices to show this for all $\bar{\nu} + \nu > \lambda$. Suppose that $\text{gr}^{\bar{\nu}'}_{\bar{F}}\text{gr}^{\nu'}_FM \neq 0$ for some $\bar{\nu}' + \nu' > \lambda$, then $M' \defeq \bar{F}^{\geq\bar{\nu}'}M \cap F^{\geq\nu'}M$ is nonzero, $M'$ with the induced filtrations $F'$, $\bar{F}'$ is a strict subobject of $(M,F,\bar{F})$ and $\mu(M',F',\bar{F}') > \lambda$, a contradiction. This establishes the description of $\mathcal{B}_2(\lambda)$ in (3). \\
Now it follows from \cite[Lemma 1.5.1(ii)]{ARTICLE:3} that $F_\mu$ as in (2) is the slope filtration corresponding to the slope function $\mu$. It is also an immediate consequence that $W\mathcal{B}_2$ is the category considered in \cite[Théorème 1.2.10]{ARTICLE:2}, as claimed in (4). \\
Assertions (5) and (6) are part of \cite[Théorème 1.2.10]{ARTICLE:2} and can also be derived from \ref{thm:1}. To illustrate this for the nontrivial part of (5), it is enough to observe that \ref{thm:1} implies that $W_{\leq\lambda}f$ and $\text{gr}^W_\lambda f$ are strict in $\mathcal{B}_2$.
\end{proof}

\begin{subvariant}
\namedlabel{var:1}{Variant \thesubsubsection}
Let $G: \mathcal{A} \to \mathcal{B}$ be an exact faithful functor of abelian categories (hence $G$ detects exactness), where $\mathcal{B}$ is equipped with a rank function. Let $W^G\mathcal{B}_2$ be the category whose objects are $(X,W) \in W\mathcal{A}^\text{pre}$ together with two descending filtrations $F, \bar{F}$ on $GX$ such that $(GX,F,\bar{F},GW) \in W\mathcal{B}_2$, and whose morphisms are morphisms $f$ in $\mathcal{A}$ such that $f$ respects the filtration $W$ and $Gf$ respects the filtrations $F$ and $\bar{F}$. Then $W^G\mathcal{B}_2$ is an abelian category, any morphism $f$ in $W^G\mathcal{B}_2$ has the property that $f$ in $\mathcal{A}$ is strictly compatible with the filtration $W$ and $\text{gr}^W$ is an exact functor $W^G\mathcal{B}_2 \to \mathcal{A}$. \\
Applied with $\mathcal{A}$ the category of finite-dimensional $\mathbb{Q}$-vector spaces, $\mathcal{B}$ the category of finite-dimensional $\mathbb{C}$-vector spaces and $G$ the scalar extension functor, this yields Deligne's theorem \cite[Théorème 2.3.5]{ARTICLE:2} on the abelianess of the category of mixed Hodge structures in the original sense (that is with rational lattice, \cite[Définition 2.3.1]{ARTICLE:2}).
\end{subvariant}

\subsection{Twistor structures}

Let $X \defeq \mathbb{P}^1_\mathbb{C}$ be the projective line, regarded either as scheme or, through its analytification, as complex analytic space. In either case, let $\mathcal{O}_X$ denote the structure sheaf. \\
When adopting the first perspective, the complex numbers may be replaced by any field. Then \ref{rmk:5}(1) applies as long as the characteristic is zero. The assertions of \ref{ex:3} remain valid without the assumption on the characteristic.

\begin{subexample}
\namedlabel{ex:3}{Example \thesubsubsection}
A \textit{twistor structure} \cite{MISC:3} is a vector bundle, that is locally free $\mathcal{O}_X$-module of finite (necessarily constant) rank, on $X$. By a standard result of Grothendieck, every vector bundle $\mathcal{V}$ on $X$ is a direct sum of line bundles, hence of the form
\begin{equation*}
\mathcal{V} \cong \bigoplus_{i = 1}^{\text{rk}(\mathcal{V})} \mathcal{O}_X\!\left(n_i\right)
\end{equation*}
for suitable $n_i \in \mathbb{Z}$. Its slope in the classical sense is
\begin{equation*}
\mu(\mathcal{V}) = \frac{\sum_{i = 1}^{\text{rk}(\mathcal{V})} n_i}{\text{rk}(\mathcal{V})},
\end{equation*}
which indeed defines a slope function on the quasi-abelian category $\mathcal{A}$ of twistor structures, see for instance \ref{rmk:5}(1) (elaborating on \cite[3.1.1]{ARTICLE:1}). It is clear that with respect to this slope function, the twistor structure $\mathcal{V}$ is semistable of slope $n$ if and only if $n \in \mathbb{Z}$ and $\mathcal{V}$ is a direct sum of $\text{rk}(\mathcal{V})$ many copies of $\mathcal{O}_X(n)$. \\
The category $W\mathcal{A}$ is the category of \textit{mixed twistor structures} which was introduced and studied in \cite{MISC:3}.
\end{subexample}

\begin{subremark}
\namedlabel{rmk:5}{Remark \thesubsubsection}
$\text{}$
\begin{itemize}
\item[(1)] Since $X$ is in particular a normal curve (a curve for us is a one-dimensional integral scheme separated and of finite type over a field), the category $\mathcal{A}$ of vector bundles on $X$ is the quasi-abelian category of torsionfree coherent sheaves on $X$ with left abelian envelope $\mathcal{L}_\mathcal{A}$ the category of all coherent sheaves on $X$. \\
More specifically for $X = \mathbb{P}^1_\mathbb{C}$, $\mathcal{A}$ can be given the structure of a quasi-tannakian category (\ref{app:A.3}) from \cite[Examples 2.1.4(2)]{ARTICLE:1}. The quasi-tannakian determinant of $\mathcal{V} \cong \bigoplus_{i = 1}^{\text{rk}(\mathcal{V})} \mathcal{O}_X\!\left(n_i\right)$ is $\text{det}(\mathcal{V}) = \bigwedge^{\text{rk}(\mathcal{V})}\mathcal{V} \cong \mathcal{O}_X\!\left(\sum_i n_i\right)$, and as $\delta: \text{Pic}(\mathcal{A}) \to \mathbb{Z}$, $\mathcal{O}_X(n) \mapsto n$ is a homomorphism satisfying $\delta\!\left(\mathcal{O}_X(n)\right) \geq 0$ if there is a nonzero morphism $\mathcal{O}_X \to \mathcal{O}_X(n)$, with equality only if this is an isomorphism, the function $\mu = (\delta \circ \text{det})/\text{rk}$ is a $\mathbb{Q}$-valued slope function on $\mathcal{A}$ (\cite[Theorem 2.2.8]{ARTICLE:1}, \ref{app:A.3}).
\item[(2)] When in \ref{def:4}, $\mathcal{B}$ is the category of finite-dimensional complex vector spaces, the category $W\mathcal{B}_2$ of abstract mixed Hodge structures maps faithfully to the category of mixed twistor structures via the Rees construction \cite[Proposition 1.2]{MISC:3}. If instead of the category $W\mathcal{B}_2$, one considers the category of mixed Hodge structures in its original sense (\ref{var:1}), the corresponding result is \cite[Lemma 1.1]{MISC:3}.
\end{itemize}
\end{subremark}

\subsection{Monodromy-weight structures}

\begin{subnotation}
Let $K$ be a local field (a finite extension of $\mathbb{Q}_p$ or $\mathbb{F}_p((t))$), $l \neq p$ a prime number different from the residue characteristic of $K$ and $\overline{K}$ a separable closure of $K$. We let $\mathcal{A}$ denote the abelian category of $l$-adic representations of the absolute Galois group $G_K = \text{Gal}\!\left(\overline{K}/K\right)$ in the sense of \cite[1.1.6]{ARTICLE:5}, that is the filtered colimit (with exact transition functors) of the categories of finite-dimensional, continuous $G_K$-representations over finite extensions of $\mathbb{Q}_l$. Every object of $\mathcal{A}$ has a finite-dimensional $\overline{\mathbb{Q}_l}$-vector space $V$ attached to it and we will often just write $V \in \mathcal{A}$. We fix a field isomorphism $\iota: \overline{\mathbb{Q}_l} \cong \mathbb{C}$, with respect to which the weights will be computed, and, to simplify notation, an orientation $\mathbb{Z}_l(1) \cong \mathbb{Z}_l$ of $\overline{K}$. \\
Given an $l$-adic representation $\rho: G_K \to \text{Aut}(V)$, write $N$ for the corresponding nilpotent monodromy operator \cite[1.7.2, 1.7.3]{ARTICLE:5} on $V$. By construction of $N$, there is an open normal subgroup $H$ of the inertia group $I_K \subset G_K$ such that $\rho(h) = \text{exp}\!\left(t_l(h)N\right)$ holds for all $h \in H$, where $t_l: I_K \to \mathbb{Z}_l$ denotes the tame $l$-adic character, associated to the action of $I_K$ on the $l^n$th roots of a uniformiser of $K$ (\cite[Theorem 2.26]{MISC:4}). For $h \in I_K$ and any element $g$ of the Weil group $W_K \cong I_K \rtimes \langle F \rangle$, it verifies $t_l\!\left(ghg^{-1}\right) = q^{a(g)}t_l(h)$. Here, $F$ is any geometric Frobenius element, $q$ denotes the cardinality of the residue field $k$ of $K$ and $a$ is the homomorphism
\[\begin{tikzcd}[row sep=small]
{W_K} && {\mathbb{Z}} \\
{G_K} & {\text{Gal}\!\left(\overline{k}/k\right)} & {\widehat{\mathbb{Z}}} \\
F^{-1} & {\left(x \mapsto x^q\right)} & 1
\arrow["a", from=1-1, to=1-3]
\arrow[hook, from=1-1, to=2-1]
\arrow[hook, from=1-3, to=2-3]
\arrow[two heads, from=2-1, to=2-2]
\arrow["\cong", from=2-2, to=2-3]
\arrow[maps to, from=3-1, to=3-2]
\arrow[maps to, from=3-2, to=3-3]
\end{tikzcd}\]
so that in terms of class field theory, $q^{a(g)}$ is the normalised absolute value of the element of $K^\times$ corresponding to $\overline{g} \in W_K^\text{ab}$ under the reciprocity isomorphism. \\
When $h \in H$, we consequently have $\rho\!\left(ghg^{-1}\right) = \rho(h)^{q^{a(g)}}$. It follows that $\rho(g)N = q^{a(g)}N\rho(g)$ holds for all $g \in W_K$, and in particular,
\begin{equation}
\label{eq:*}
\tag{$*$}
\rho(F)N = q^{-1}N\rho(F)
\end{equation}
for any geometric Frobenius lift $F \in W_K$. Remembering twists, this means that $N$ is a morphism of representations $V(1) \to V$.
\end{subnotation}

\begin{subdefinition}
Fix the rank function on $\mathcal{A}$ which computes the dimension of the underlying vector space.
\begin{itemize}
\item[(1)] If $\overline{W}$ denotes the Frobenius-weight filtration \cite[Proposition-définition 1.7.5]{ARTICLE:5}, we define the \textit{monodromy slope} of $V \in \mathcal{A}$ as
\begin{equation*}
\mu(V) \defeq \frac{1}{\text{dim}(V)}\left(\sum_{\nu \in \mathbb{R}} \text{dim}\!\left(\text{gr}^{\overline{W}}_\nu V\right)\nu\right).
\end{equation*}
\end{itemize}
We use different notation than in \cite{ARTICLE:5} in order to distinguish between various filtrations. The Frobenius-weight filtration $\overline{W}$ on $V$ is the unique ascending $\mathbb{R}$-indexed filtration in $\mathcal{A}$ with the property that for all $\nu \in \mathbb{R}$, the Frobenius action on $\text{gr}^{\overline{W}}_\nu V$ is $\iota$-pure of weight $\nu$, which means that images of the eigenvalues of $F$ under $\iota$ have absolute value $q^\frac{\nu}{2}$. This does not depend on the choice of Frobenius lift \cite[Lemme 1.7.4]{ARTICLE:5}. Concretely, $\overline{W}_{\!\leq\nu}V$ is the sum of generalised Frobenius eigenspaces of $V$ of weight $\leq \nu$, hence satisfies $N\overline{W}_{\!\leq\nu} V \subset \overline{W}_{\!\leq\nu-2}V$ by \eqref{eq:*}. $\overline{W}$ is a functorial filtration on $\mathcal{A}$ and it is exact due to triangularisability, so that $\mu$ defined above is indeed a $\mathbb{R}$-valued slope function on $\mathcal{A}$. Note that condition \eqref{eq:dagger} is empty in this case because $\mathcal{A}$ is abelian. \\
$\iota$ being fixed, we will just say pure instead of $\iota$-pure in the following.
\begin{itemize}
\item[(2)] We call $V$ a \textit{pure monodromy-weight structure of weight $\lambda \in \mathbb{R}$} if $\text{gr}^{\overline{W}}_\nu V = 0$ holds whenever $\nu \notin \lambda + \mathbb{Z}$ and if $N^a: \text{gr}^{\overline{W}}_{\lambda + a} V \to \text{gr}^{\overline{W}}_{\lambda - a} V$ is a vector space isomorphism for all $a \in \mathbb{N}$.
\end{itemize}
For such $V$, $N^a: (\text{gr}^{\overline{W}}_{\lambda + a} V)(a) \to \text{gr}^{\overline{W}}_{\lambda - a} V$ are thus isomorphisms in $\mathcal{A}$. \\
Item (2) can be rephrased as follows: Let $M$ denote the monodromy filtration on $V$ attached to $N$ \cite[Proposition 1.6.1, 1.7.2, 1.7.3]{ARTICLE:5}, that is the unique ascending $\mathbb{Z}$-indexed filtration by subspaces such that $NM_aV \subset M_{a-2}V$ and such that $N^a$ induces an isomorphism $\text{gr}^M_aV \to \text{gr}^M_{-a}V$ for all $a \in \mathbb{N}$. Then $V$ is a pure monodromy-weight structure of weight $\lambda$ if and only if the Frobenius-weight filtration $\overline{W}$ equals the monodromy filtration $M$ shifted by $\lambda$. \\
The monodromy filtration is in fact a filtration in $\mathcal{A}$, because uniqueness implies that $M_aV \subset V$ are $G_K$-invariant. It is functorial on $\mathcal{A}$, as $f \circ N' = N \circ f$ holds for morphisms $f: V' \to V$ in $\mathcal{A}$, but it fails to be exact.
\end{subdefinition}

\begin{subproposition}
\namedlabel{prop:3}{Proposition \thesubsubsection}
A nonzero object $V$ of $\mathcal{A}$ is semistable of monodromy slope $\lambda$ if and only if it is a pure monodromy-weight structure of weight $\lambda$.
\end{subproposition}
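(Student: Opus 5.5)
The plan is to pass to the Weil--Deligne description of $V$. By Grothendieck's monodromy theorem, $\rho|_{W_K}$ corresponds to a pair $(r,N)$, where $r$ is a representation of $W_K$ whose restriction to $I_K$ has finite image. Then $r(F)^m$ commutes with $r(W_K)$ for suitable $m$. The generalised eigenspaces of $r(F)^m$, grouped by $\iota$-weight, therefore give a $W_K$-stable grading $V = \bigoplus_\nu V_\nu$ splitting $\overline{W}$, and by \eqref{eq:*} we have $N V_\nu \subset V_{\nu-2}$. A subspace of $V$ is a subobject in $\mathcal{A}$ if and only if it is $r$-stable and $N$-stable, and such a subspace is graded. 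Hence, for a subobject $U \hookrightarrow V$, exactness of $\overline{W}$ gives
\begin{equation*}
\mu(U) = \dim(U)^{-1}\sum_\nu \dim\!\left(U \cap V_\nu\right)\nu .
\end{equation*}
Dually, quotients have slope computed from the induced grading. Everything then reduces to linear algebra of the graded nilpotent operator $N$.

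\emph{Pure $\Rightarrow$ semistable.} Let $U \subset V$ be a nonzero subobject and put $U_\nu = U \cap V_\nu$. Since $N^a : V_{\lambda+a} \to V_{\lambda-a}$ is injective for $a \geq 0$ and $U$ is $N$-stable, $N^a$ maps $U_{\lambda+a}$ injectively into $U_{\lambda-a}$. Thus $\dim U_{\lambda+a} \leq \dim U_{\lambda-a}$ for all $a \in \mathbb{N}$, and all weights of $U$ lie in $\lambda + \mathbb{Z}$. Pairing the terms for $\pm a$ gives $\sum_\nu \dim(U_\nu)(\nu - \lambda) \leq 0$, that is $\mu(U) \leq \lambda$. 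Equality holds for $U = V$, by the symmetry $\dim V_{\lambda+a} = \dim V_{\lambda-a}$, so $\mu(V) = \lambda$.

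\emph{Semistable $\Rightarrow$ pure.} First I split by cosets: $V = \bigoplus_{c \in \mathbb{R}/\mathbb{Z}} V_{(c)}$ with $V_{(c)} = \bigoplus_{\nu \in c} V_\nu$. This is a decomposition into subobjects, since $N$ preserves cosets. A semistable direct sum of slope $\lambda$ forces every nonzero summand to be semistable of slope $\lambda$, because each summand is both a subobject and a quotient. So I may assume all weights lie in a single coset $c + \mathbb{Z}$.

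Next, I show injectivity of $N^a$ on $V_{\rho+a}$ for $a \geq 1$, where $\rho$ is a suitable centre. Fix $a \geq 1$ and set $K = \ker(N^a) \cap V_{\rho+a}$, which is $r$-stable. Let $U = \sum_{j<a} N^j K$; it is $r$- and $N$-stable, hence a subobject. Filtering $K$ by the exact nilpotency order $b \leq a$ of its elements shows that $U$ is built from strings of weights $\rho+a, \rho+a-2, \dots, \rho+a-2(b-1)$. Each such string has average weight $\rho + a - b + 1 \geq \rho + 1$. To keep the bookkeeping clean, I would take $a$ maximal with $K \neq 0$, so that these strings are genuine.

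Applying the same argument to the dual $V^\vee$ (equivalently to quotients, using the universal destabilising quotient and \ref{lemma:5}) gives the mirror statement, surjectivity of $N^a$. Taking $\rho$ to be the point about which weights must be balanced forces $\rho = \lambda$. Then $N^a : V_{\lambda+a} \to V_{\lambda-a}$ is an isomorphism for all $a \in \mathbb{N}$, and in particular $c = \lambda + \mathbb{Z}$.

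The main obstacle is this last step. It requires choosing the centre and a subobject whose slope is strictly larger than $\lambda$ whenever $N^a$ fails to be injective on $V_{\lambda+a}$. I would first try to run it via the Jacobson--Morozov/Lefschetz decomposition of $(V,N)$ into Jordan strings, which is $r$-equivariant up to the twist by \eqref{eq:*}. Assume some string is not centred at $\lambda$, and take the strings with the highest centre. Their span, together with everything above, forms a subobject of slope $> \lambda$ unless all centres equal $\lambda$. The same reasoning also rules out centres outside $\lambda + \mathbb{Z}$, since semistability of slope $\lambda$ then forces the single common centre to be $\lambda$.
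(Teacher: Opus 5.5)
\textbf{Verdict.} Your proof is nearly complete, but it has one genuine gap, and it is not where you put it. The step you call the main obstacle is already settled by your own construction. The real gap is pinning down the coset.

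\textbf{The step you flag is already done.} Take any real $\rho$ and integer $a\geq 1$, and suppose $K=\ker(N^a)\cap V_{\rho+a}\neq 0$. Then $U=\bigoplus_{0\leq j<a}N^jK$ is a nonzero subobject with $U_{\rho+a-2j}=N^jK$. Since $\dim N^jK$ is non-increasing in $j$, the weighted mean of the weights $\rho+a-2j$ is at least their plain mean, so $\mu(U)\geq\rho+1$. Maximality of $a$ is not needed for this. Semistability therefore forces $N^a$ to be injective on $V_{\rho+a}$ for \emph{every} $\rho>\lambda-1$. Applying the same argument to $V^\vee$, which is semistable of slope $-\lambda$, makes $N^a\colon V_{\rho+a}\to V_{\rho-a}$ surjective for every $\rho<\lambda+1$. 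For $\rho=\lambda$ this is exactly the destabilising subobject you say you still need.

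\textbf{The actual gap: the coset.} With $\rho=\lambda$ the isomorphisms $V_{\lambda+a}\cong V_{\lambda-a}$ hold vacuously if $c\neq\lambda+\mathbb{Z}$. So ``in particular $c=\lambda+\mathbb{Z}$'' does not follow. Likewise, ``the point about which weights must be balanced'' presupposes a balancing centre whose existence you have not shown.

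\textbf{How to close it.} Choose $\rho\in c$ with $|\rho-\lambda|<1$; every coset of $\mathbb{Z}$ meets this interval. By the two statements above, $N^a\colon V_{\rho+a}\to V_{\rho-a}$ is bijective for all $a\geq 1$. Hence the weights of $V$ are symmetric about $\rho$, and $\lambda=\mu(V)=\rho$. This gives $c=\lambda+\mathbb{Z}$ and purity at once.

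\textbf{The Jacobson--Morozov fallback.} As stated it is not justified. A decomposition into Jordan strings is a choice, and $r$ need not carry strings to strings; for example, this fails if $r(F)$ acts by a nontrivial unipotent on the multiplicity space of a string type. So ``$r$-equivariant up to the twist'' is false. What is canonical is the span of all strings of centre $\geq\eta$, namely $\bigoplus_a(V_{\eta+a}\cap M_aV)$, which is visibly $r$- and $N$-stable. For the top centre $\nu$ this is the span you want, and it is precisely the paper's $U_\nu$.

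\textbf{Comparison with the paper.} With that repair, the fallback becomes the paper's first proof. There, $G_K$-stability of $U_\eta$ is obtained without Weil--Deligne theory, by showing independence of the Frobenius lift. Then $\mu(V)\leq\nu$ is proved via the double grading by $\overline{W}$ and $M$. The conclusion comes from induction on rank and the two-out-of-three property. Your remark that $\mu(V)$ is the length-weighted mean of the centres would make that induction unnecessary.

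The repaired kernel-string route is genuinely different and more economical. It needs no monodromy filtration, no universal destabilising subobject and no induction, only explicit destabilising subobjects and duality. The paper's construction, in exchange, yields the explicit universal destabilising subobject of \ref{cor:3}. The paper's second proof also uses duality, but it additionally needs tensor products and primitive parts.
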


\begin{intermediate}
As a consequence, for any $\lambda \in \mathbb{R}$, the full subcategory of $\mathcal{A}$ consisting of zero and the pure monodromy-weight structures of weight $\lambda$ is abelian. \\

We will give two proofs of \ref{prop:3}. The first one is constructive, in that for an arbitrary representation $V \in \mathcal{A}$, it exhibits the universal destabilising subobject with respect to the monodromy slope function (\ref{cor:3}).
\end{intermediate}

\begin{proof}[First proof of \ref{prop:3}]
$\text{}$ \\
($\impliedby$) Let $V$ be a pure monodromy-weight structure of weight $\lambda$ and compute
\begin{equation*}
\mu(V) = \frac{1}{\text{dim}(V)}\left(\sum_{\nu \in \mathbb{R}} \text{dim}\!\left(\text{gr}^{\overline{W}}_\nu V\right)\nu\right) = \frac{1}{\text{dim}(V)}\lambda\left(\sum_{a \in \mathbb{Z}} \text{dim}\!\left(\text{gr}^{\overline{W}}_{\lambda + a} V\right)\right) = \lambda.
\end{equation*}
Let $V' \hookrightarrow V$ be a subobject in $\mathcal{A}$. Because of exactness of the Frobenius-weight filtration $\overline{W}$, $\text{gr}^{\overline{W}}_\nu V' \hookrightarrow \text{gr}^{\overline{W}}_\nu V$ is monic for all $\nu \in \mathbb{R}$, and for every $a \in \mathbb{N}$, we have a commutative diagram
\[\begin{tikzcd}
{\text{gr}^{\overline{W}}_{\lambda + a} V} & {\text{gr}^{\overline{W}}_{\lambda - a} V} \\
{\text{gr}^{\overline{W}}_{\lambda + a} V'} & {\text{gr}^{\overline{W}}_{\lambda - a} V'}
\arrow["\cong"', "{N^a}", from=1-1, to=1-2]
\arrow[hook', from=2-1, to=1-1]
\arrow["{(N')^a}", from=2-1, to=2-2]
\arrow[hook', from=2-2, to=1-2]
\end{tikzcd}\]
showing that $(N')^a: \text{gr}^{\overline{W}}_{\lambda + a} V' \hookrightarrow \text{gr}^{\overline{W}}_{\lambda - a} V'$ is monic. It follows that $\sum_{a \in \mathbb{Z}} \text{dim}\!\left(\text{gr}^{\overline{W}}_{\lambda + a} V'\right)a \leq 0$ and consequently
\begin{equation*}
\mu\!\left(V'\right) = \frac{1}{\text{dim}\!\left(V'\right)}\left(\text{dim}\!\left(V'\right)\lambda + \sum_{a \in \mathbb{Z}} \text{dim}\!\left(\text{gr}^{\overline{W}}_{\lambda + a} V'\right)a\right) \leq \lambda.
\end{equation*}
Hence $V$ is semistable of monodromy slope $\lambda$. \\
($\implies$) We will prove the converse by induction on the rank $\geq 1$ of $V$, the induction start being trivial. For the purpose of the induction step, we will construct a nonzero pure monodromy-weight structure of weight $\lambda$ inside $V$. In fact, for an arbitrary representation $V \in \mathcal{A}$, what we will construct the largest, in terms of weight and size, pure monodromy-weight structure inside $V$ (as explained in the proof of \ref{cor:3}). \\
Fix $0 \neq V \in \mathcal{A}$. Let $F$, $F''$ be two Frobenius lifts and let $\rho(F)$, $\rho\!\left(F''\right)$ denote the automorphisms through which they act on $V$. For $\eta \in \mathbb{R}$, write $V_\eta$ and $V''_\eta$ for the sum of generalised eigenspaces of weight $\eta$ of $\rho(F)$ and $\rho\!\left(F''\right)$ respectively. Since $\left\{\overline{\rho\!\left(F''\right)^m\rho(F)^{-m}} \in \rho\!\left(I_K\right)/\rho(H) \mid m \geq 1\right\}$ is finite, it holds that $\rho\!\left(F''\right)^n = \text{exp}(zN)\rho(F)^n = \text{exp}(wN)\rho(F)^n\text{exp}(wN)^{-1}$ for suitable $n \geq 1$, $z \in \mathbb{Z}_l$ and $w = z(1-q^{-n})^{-1}$, compare the proof of \cite[1.7.5]{ARTICLE:5}. Put $U_\eta \defeq \bigoplus_{a \in \mathbb{Z}} \left(V_{\eta+a} \cap M_aV\right)$ (recall that $M$ denotes the monodromy filtration on $V$) and define $U''_\eta$ analogously. Then $\text{exp}(wN)$ maps $U_\eta$ into $U''_\eta$ and at the same time fixes $U_\eta$. Therefore, $U_\eta = U''_\eta$ is independent of the choice of Frobenius lift and in particular invariant under the action of $W_K$, thus already a $G_K$-invariant subspace of $V$. \\
The isomorphism $N^a: \text{gr}^M_aV(a) \stackrel{\cong}{\to} \text{gr}^M_{-a}V$ for $a \geq 0$ yields for every $b \in \mathbb{Z}$ an isomorphism
\begin{equation*}
N^a: \frac{V_{\eta+b+2a} \cap M_aV}{V_{\eta+b+2a} \cap M_{a-1}V} =\left(\text{gr}^M_aV(a)\right)_{\eta+b} \stackrel{\cong}{\to} \left(\text{gr}^M_{-a}V\right)_{\eta+b} = \frac{V_{\eta+b} \cap M_{-a}V}{V_{\eta+b} \cap M_{-a-1}V},
\end{equation*}
so that the monodromy filtration on $U_\eta$ is given by $M_aU_\eta = \bigoplus_{b \in \mathbb{Z}} \left(V_{\eta+b} \cap M_{\min\{a,b\}}V\right)$ with
\begin{equation*}
\text{gr}^M_aU_\eta = \bigoplus_{b \geq a} \frac{V_{\eta+b} \cap M_aV}{V_{\eta+b} \cap M_{a-1}V} = \bigoplus_{b \geq a} \left(\text{gr}^M_aV\right)_{\eta+b}.
\end{equation*}
Take $\nu \defeq \text{max}\!\left\{\eta \in \mathbb{R} \mid U_\eta \neq 0\right\}$, see \ref{rmk:8}, and put $U \defeq U_\nu$. Then for all $a \in \mathbb{Z}$, the weights on $\text{gr}^M_aV$ are bounded above by $\nu + a$. Hence $\text{gr}^M_aU = \left(\text{gr}^M_aV\right)_{\nu+a}$ and so the Frobenius action on $\text{gr}^M_aU$ is pure of weight $\nu + a$, which means that $U \neq 0$ is a pure monodromy-weight structure of weight $\nu$. Moreover, using exactness of $\overline{W}$,
\begin{equation*}
\begin{split}
\mu(V) &= \frac{1}{\text{dim}(V)}\!\left(\sum_{\eta \in \mathbb{R}}\text{dim}\!\left(\text{gr}^{\overline{W}}_\eta V\right)\eta - \sum_{a \in \mathbb{Z}}\text{dim}\!\left(\text{gr}^M_aV\right)a\right) \\
&= \frac{1}{\text{dim}(V)}\sum_{\eta \in \mathbb{R}, a \in \mathbb{Z}}\text{dim}\!\left(\text{gr}^{\overline{W}}_\eta\text{gr}^M_aV\right)(\eta-a) \\
&\leq \nu\frac{1}{\text{dim}(V)}\sum_{\eta \in \mathbb{R}, a \in \mathbb{Z}}\text{dim}\!\left(\text{gr}^{\overline{W}}_\eta\text{gr}^M_aV\right) = \nu.
\end{split}
\end{equation*} 
Suppose now that $V$ is semistable of slope $\lambda$. Then we must have $\nu = \lambda$. If $U = V$, we are done with the proof of the proposition. If $U \neq V$, the representation $V/U$ is again semistable of slope $\lambda$ and of smaller rank than $V$, thus a pure monodromy-weight structure of weight $\lambda$ by the induction hypothesis. Since pure monodromy-weight structures of fixed weight satisfy a two-out-of-three property in $\mathcal{A}$, we deduce that, in any case, $V$ is a pure monodromy-weight structure of weight $\lambda$.
\end{proof}

\begin{intermediate}
Actually, the case $U \neq V$ does not occur when $V$ is semistable, as the following corollary shows.
\end{intermediate}

\begin{subcorollary}
\namedlabel{cor:3}{Corollary \thesubsubsection}
We keep the notation from the first proof of \ref{prop:3}. Let $\nu$ be the largest weight such that $U \defeq \bigoplus_{a \in \mathbb{Z}} \left(V_{\nu+a} \cap M_aV\right)$ is nonzero. Then $U \hookrightarrow V$ is the universal destabilising subobject of $V$.
\end{subcorollary}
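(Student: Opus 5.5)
To show that $U \hookrightarrow V$ is the universal destabilising subobject, I will check its two defining properties. First, $U$ is semistable. Second, every nonzero subobject $V' \hookrightarrow V$ either has $\mu(V') < \nu$ or factors through $U$.

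\textbf{Semistability and slope of $U$.} The first proof of \ref{prop:3} already shows that $U$ is a $G_K$-stable subspace and a nonzero pure monodromy-weight structure of weight $\nu$. By the direction ($\impliedby$) of \ref{prop:3}, $U$ is therefore semistable of slope $\nu$.

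\textbf{Subobjects of $V$ have slope at most $\nu$.} Let $V' \subset V$ be a nonzero subrepresentation, with its own monodromy operator $N' = N|_{V'}$, monodromy filtration $M'$, and spaces $U'_\eta$ formed as for $V$. The slope computation at the end of the first proof of \ref{prop:3}, applied to $V'$, gives $\mu(V') \le \nu'$, where $\nu'$ is the largest $\eta$ with $U'_\eta \neq 0$. I then need $\nu' \le \nu$, and more precisely $U'_{\nu} \subset U$. For this I compare the filtrations. By functoriality of the monodromy filtration, the inclusion $V' \to V$ satisfies $M'_aV' \subset M_aV$. Frobenius generalised eigenspaces are compatible with the inclusion, so $V'_{\eta} = V' \cap V_\eta$. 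Hence $U'_\eta \subset \bigoplus_a (V_{\eta+a} \cap M_aV) = U_\eta$. Since $U_\eta = 0$ for $\eta > \nu$, this gives $\nu' \le \nu$, so $\mu(V') \le \nu$.

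\textbf{The equality case.} Suppose $\mu(V') = \nu$. I want to show $V' \subset U$, and I expect this to be the main obstacle. The plan is to revisit the inequality in the slope computation:
\[
\mu(V') = \frac{1}{\dim V'}\sum_{\eta,a}\dim\bigl(\text{gr}^{\overline{W}}_\eta \text{gr}^{M'}_a V'\bigr)(\eta-a) \le \nu .
\]
This estimate rests on the fact that, for every $a$, the weights occurring in $\text{gr}^{M'}_aV'$ are at most $\nu+a$. For $V'$ this bound follows from the corresponding one for $V$ together with the compatibility above. Equality forces every weight occurring in $\text{gr}^{M'}_aV'$ to be exactly $\nu+a$. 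Using the splitting of $M'$ by the Frobenius-eigenspace decomposition, as in the proof of \ref{prop:3} where $\text{gr}^M_aU = (\text{gr}^M_aV)_{\nu+a}$, I conclude that $V' = \bigoplus_a (V'_{\nu+a} \cap M'_aV') = U'_\nu$. Since $U'_\nu \subset U_\nu = U$ by the comparison above, $V'$ factors through $U$.

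The delicate point is the step from "all weights of $\text{gr}^{M'}_aV'$ equal $\nu+a$" to $V' = U'_\nu$. One must check that the eigenspace sum $\bigoplus_a (V'_{\nu+a}\cap M'_a V')$ really exhausts $V'$. I plan to do this by induction along the filtration $M'$, lifting each graded piece to the corresponding weight eigenspace; these eigenspaces are direct summands of $V'$ for the Frobenius action. If that argument runs into trouble, I will fall back on \ref{prop:3}. The subobject $V'$ is then semistable of slope $\nu$, because any of its subobjects has slope at most $\nu$ by the previous step. So $V'$ is pure of weight $\nu$, which again gives $V' = U'_\nu \subset U$.
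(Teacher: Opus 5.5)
Your proof is correct. It differs from the paper's in how it handles subobjects.

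\textbf{The paper's route.} The paper examines only semistable subobjects. By \ref{prop:3} these are exactly the pure monodromy-weight structures $V' \subset V$. For such $V'$, the inclusion $V'_{\eta+a} = V'_{\eta+a}\cap M_aV' \subset V_{\eta+a}\cap M_aV$ gives $V' \subset U_\eta$ directly. The passage from semistable subobjects to all subobjects is then left to the general existence and maximality of the universal destabilising subobject.

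\textbf{Your route.} You verify the defining property against every nonzero subobject directly. You run the slope estimate from the first proof of \ref{prop:3} on $V'$ itself. The same compatibility ($M'_aV' \subset M_aV$ and $V'_\eta = V'\cap V_\eta$) gives $U'_\eta \subset U_\eta$, hence $\mu(V') \le \nu' \le \nu$.

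\textbf{The equality case.} The step you flag as delicate is immediate, so no fallback is needed. Each $M'_aV'$ is Frobenius-stable, so $M'_aV' = \bigoplus_\eta (V'_\eta \cap M'_aV')$. Suppose $\text{gr}^{M'}_aV'$ carries only the weight $\nu+a$ for every $a$. Then for each fixed $\eta$, the filtration $a \mapsto V'_\eta \cap M'_aV'$ jumps only at $a = \eta-\nu$. Hence $V'_\eta = V'_\eta \cap M'_{\eta-\nu}V'$, and therefore $V' = U'_\nu$.

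\textbf{What each approach buys.} Your route does not use the direction ($\implies$) of \ref{prop:3}. Semistability of $U$ then only needs $\mu(U) = \nu$, since you have already bounded the slopes of all subobjects of $V$ by $\nu$. Applied to $V' = V$, your argument reproves that direction, together with the paper's remark that $U = V$ whenever $V$ is semistable. The paper's proof is shorter, but only because it relies on \ref{prop:3} and on the general theory of universal destabilising subobjects.
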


\begin{proof}
From the proof of \ref{prop:3}, we know that $U$ is semistable of slope $\nu$. \\
While not strictly, every morphism in $\mathcal{A}$ respects the monodromy filtration, and in particular, $M_aV' \subset M_aV$ holds for a subrepresentation $V' \subset V$. So for any pure monodromy-weight structure $V'$ in $V$ of weight $\eta = \mu\!\left(V'\right)$, we have $V'_{\eta+a} = V'_{\eta+a} \cap M_aV' \subset V_{\eta+a} \cap M_aV$, and consequently, $V' \hookrightarrow V$ factors through $U_\eta = \bigoplus_{a \in \mathbb{Z}} \left(V_{\eta+a} \cap M_aV\right) \hookrightarrow V$. This shows both that $\mu\!\left(V'\right) \leq \nu$ and that $V' \hookrightarrow V$ factors through $U = U_\nu \hookrightarrow V$ whenever $\mu\!\left(V'\right) = \nu$. Since by \ref{prop:3}, the pure monodromy weight-structures in $V$ are precisely the semistable subobjects, we conclude that $U$ is the universal destabilising subobject.
\end{proof}

\begin{subremark}
\namedlabel{rmk:8}{Remark \thesubsubsection}
Concretely, one has $\nu = \text{max}\!\left\{\nu_0, \nu_1-1\right\}$, where $\nu_0$ is the largest weight such that $V_{\nu_0} \cap M_0V \neq 0$ and $\nu_1$ is the largest weight such that $V_{\nu_1} \cap M_1V \neq 0$. This is because for every $a \in \mathbb{Z}$, $\text{gr}^M_aV$ is a subobject or a quotient of the appropriate twist of $\text{gr}^M_0V$ or $\text{gr}^M_1V$. Note also that when $V \neq 0$, at least one of $\text{gr}^M_0V$ or $\text{gr}^M_1V$ is nonzero.
\end{subremark}

\begin{intermediate}
Let us give another proof of the implication ($\implies$) of \ref{prop:3} built on an argument due to Deligne \cite[Proof of Théorème 1.8.4]{ARTICLE:5}. It uses that with respect to the the canonical tannakian structure on $\mathcal{A}$, the monodromy slope is $\otimes$-multiplicative (see \ref{app:A.3}). \\
To see this, observe that the obvious filtrations on a tensor product $V \otimes V'$ in $\mathcal{A}$ induced by the monodromy filtrations and the Frobenius-weight filtrations of $V$, $V'$ have the characterising properties of the monodromy filtration and the Frobenius-weight filtration of $V \otimes V'$ respectively (compare \cite[Proposition 1.6.9]{ARTICLE:5} for the monodromy filtration). \\
As an aside, the monodromy slope is in particular determinantal (again see \ref{app:A.3}), so $\text{dim}(V)\mu(V)$ is the $\iota$-weight of the determinant of $V$ in $\mathcal{A}$.
\end{intermediate}

\begin{proof}[Second proof of \ref{prop:3}]
Let $P_{-j}$ denote the $j$th primitive part, that is the kernel of $N: \text{gr}^M_{-j}V \to \text{gr}^M_{-j-2}V$ ($P_{-j} = 0$ when $j < 0$), and continue to write $P_{-j}(m)$ for its $m$th Tate twist, shifting weights by $-2m$. It holds that \cite[1.6.14, sign corrected]{ARTICLE:5}
\begin{equation*}
\text{gr}^M_iV \cong \bigoplus_{\substack{j \geq |i| \\ j \equiv i (2)}} P_{-j}\!\left(-\frac{i+j}{2}\right)
\end{equation*}
and
\begin{equation*}
P_{-j} \cong \text{gr}^{\tilde{M}}_{-j}\!\left(\text{Ker}N\right)
\end{equation*}
as representations, where $\tilde{M}$ is the filtration induced by $M$ on $\text{Ker}N$ (not the monodromy filtration on $\text{Ker}N$, which is trivial). Moreover, the formula
\begin{equation*}
P_{-j}\!\left(V^\lor\right) = \left(P_{-j}(-j)\right)^\lor
\end{equation*}
holds for the primitive parts of the dual representation, and $P_{-j} \otimes P_{-j}(-j)$ is a direct summand of $P_0\!\left(V \otimes V\right)$. \\
If $V$ is semistable of slope $\lambda$, in order to verify that $V$ is a pure monodromy-weight structure of weight $\lambda$, it suffices to show that the Frobenius action on $P_{-j}$ is pure of weight $\lambda - j$. To this end, we first observe that the weights of the Frobenius action on $\text{Ker}N$ are $\leq \lambda$. Assuming by contradition that the Frobenius acts on $\text{Ker}N$ with a weight $\nu > \lambda$, the sum of generalised Frobenius eigenspaces of $\text{Ker}N$ of weight $\nu$, $\left(\text{Ker}N\right)_\nu$, is $G_K$-invariant because $N = 0$ on $\text{Ker}N$. But $\left(\text{Ker}N\right)_\nu$ has monodromy slope $\nu > \lambda$, a contradiction. Therefore, the weights of the Frobenius action on the primitive parts are $\leq \lambda$. \\
Now let $\alpha$ be a Frobenius eigenvalue on $P_{-j}$. We need to show that its weight is $w(\alpha) = \lambda - j$. On the one hand, $\alpha^2q^j$ is a Frobenius eigenvalue on $P_{-j} \otimes P_{-j}(-j)$, hence on $P_0\!\left(V \otimes V\right)$, and since $V \otimes V$ is semistable of slope $2\lambda$ due to $\otimes$-multiplicativity of the monodromy slope, we have
\begin{equation*}
2w(\alpha) +2j = w\!\left(\alpha^2q^j\right) \leq 2\lambda
\end{equation*}
according to the preliminary observation, that is $w(\alpha) \leq \lambda - j$. On the other hand, $\alpha^{-1}q^{-j}$ is a Frobenius eigenvalue on $\left(P_{-j}(-j)\right)^\lor$, thus on $P_{-j}\!\left(V^\lor\right)$. As $V^\lor$ is semistable of slope $-\lambda$ (use \cite[Theorem 2.3.3(2b)]{ARTICLE:1}), we can apply the previous result to get
\begin{equation*}
-w(\alpha) - 2j = w\!\left(\alpha^{-1}q^{-j}\right) \leq -\lambda - j,
\end{equation*}
so $w(\alpha) \geq \lambda - j$. In summary, $w(\alpha) = \lambda - j$ as required.
\end{proof}

\begin{subdefinition}
\namedlabel{def:7}{Definition \thesubsubsection}
Let $(V,W) \in W\mathcal{A}^\text{pre}$. We call $(V,W)$ a \textit{mixed monodromy-weight structure} if for every $\lambda \in \mathbb{R}$, $\text{gr}^{\overline{W}}_\nu\text{gr}^W_\lambda V = 0$ holds whenever $\nu \notin \lambda + \mathbb{Z}$ and $N^a: \text{gr}^{\overline{W}}_{\lambda + a}\text{gr}^W_\lambda V \to \text{gr}^{\overline{W}}_{\lambda - a}\text{gr}^W_\lambda V$ is a vector space isomorphism for all $a \in \mathbb{N}$.
\end{subdefinition}

\begin{intermediate}
It amounts to the same to take for $\overline{W}$ and $N$ the Frobenius-weight filtration and nilpotent monodromy operator of $\text{gr}^W_\lambda V \in \mathcal{A}$ or those induced from $V$. \\

By \ref{prop:3}, the full subcategory of $W\mathcal{A}^\text{pre}$ consisting of the mixed-monodromy weight structures is the category $W\mathcal{A}$, hence abelian.
\end{intermediate}

\begin{subremark}
\namedlabel{rmk:3}{Remark \thesubsubsection}
$\text{}$
\begin{itemize}
\item[(1)] The Frobenius-weight filtration $\overline{W}$ is a functorial weight filtration on $\mathcal{A}$. When $N = 0$ on $V \in \mathcal{A}$, $\overline{W}$ splits on $V$. \\
Failure of exactness of the monodromy-slope filtration is equivalent to the existence of pure monodromy-weight structures with nontrivial monodromy $N$. These structures give rise to weight filtrations other than $\overline{W}$.
\item[(2)] Mixed monodromy-weight structures appear without name in \cite[1.8]{ARTICLE:5} and have been taken up in unpublished work by Scholl \cite{MISC:5}. \\
A variant is Fontaine-Ouyang's category of geometric weighted representations, which is asserted (without proof) to be abelian in \cite[Theorem 2.49]{MISC:4}. They are related to mixed monodromy-weight structures as follows: Recall the notation from the second proof of \ref{prop:3}. When $V \in \mathcal{A}$ is a $\iota$-pure monodromy-weight structure of weight $\lambda$, the Weil-Deligne representation attached to $V$ is isomorphic to the direct sum of Weil-Deligne representations $\bigoplus_{j \geq 0} \left(\bigoplus_{0 \leq i \leq j} P_{-j}(-i)\right)$, where $N$ on $\bigoplus_{0 \leq i \leq j} P_{-j}(-i)$ is given by shifting down summands, and the Frobenius action on $P_{-j}$ is $\iota$-pure of weight $\lambda - j$. That is, $V$ is \textit{$\iota$-geometric}, by which we mean geometric in the sense of \cite[Definition 2.43, Definition 2.45]{MISC:4} with pure replaced by $\iota$-pure and with the assumption of Frobenius-semisimplicity removed. Conversely, when the Weil-Deligne representation corresponding to $V \in \mathcal{A}$ is $\iota$-geometric, the $l$-adic representation $V$ is a $\iota$-pure monodromy-weight structure.
\item[(3)] Let $X$ be a smooth projective scheme over the local field $K$ and let $X_{\overline{K}}$ denote its base change to the separable closure $\overline{K}$. Rephrased in our terminology, the monodromy-weight conjecture \cite[8., Principe 8.1]{ARTICLE:9} presumes that the $l$-adic étale cohomology group $H^n_{\text{ét}}\!\left(X_{\overline{K}},\mathbb{Q}_l\right) \in \mathcal{A}$ is a $\iota$-pure monodromy-weight structure of weight $n$, according to \ref{prop:3} equivalently semistable of slope $n$ with respect to the monodromy-slope function, for all $\iota$. \\
Scholl \cite{MISC:5} proposes that this conjecture would imply that for arbitrary finite type schemes over $K$, one gets mixed structures.
\end{itemize}
\end{subremark}

\begin{appendices}
\renewcommand{\thesection}{\Alph{section}.\!\!}

\section{Categories of filtered objects}

\subsection{Strict (co-)compatibility and the modular law}

Let again $\mathcal{A}$ denote an arbitrary quasi-abelian category. For a morphism in $W\mathcal{A}^\text{pre}$ whose underlying morphism in $\mathcal{A}$ is strict, strict compatibility with the filtrations is equivalent to strict co-compatibility (\ref{rmk:1}). In this section, we show that whether or not they coincide for arbitrary morphisms in $W\mathcal{A}^\text{pre}$ is linked to the validity or non-validity of the modular law (\ref{def:2}) in $\mathcal{A}$. \\

Quasi-abelianess of $\mathcal{A}$ ensures that the following definition can be formulated without ambiguity.

\begin{subdefinition}
\namedlabel{def:2}{Definition \thesubsubsection}
We say that the \textit{modular law} holds in the quasi-abelian category $\mathcal{A}$ if for all strict subobjects $A \hookrightarrow B \hookrightarrow X$ and $C \hookrightarrow X$ in $\mathcal{A}$, the canonical morphism
\begin{equation*}
A + (B \cap C) \hookrightarrow B \cap (A + C)
\end{equation*}
is an isomorphism.
\end{subdefinition}

\begin{subexample}
\namedlabel{ex:8}{Example \thesubsubsection}
The modular law is a feature of abelian categories and is valid also when $\mathcal{A}$ is the category of finitely filtered objects of an abelian category. To give another example, the modular law holds in the category of finitely generated torsionfree modules over a Noetherian domain $R$, for one has a strongly exact, faithful functor to the category of finite-dimensional vector spaces over the fraction field of $R$. In contrast, the modular law fails to hold in the category of Hausdorff topological abelian groups, as is demonstrated in \cite[Example 2.9]{ARTICLE:4} and by other means in \ref{ex:2} below.
\end{subexample}

\begin{sublemma}
\namedlabel{lemma:2}{Lemma \thesubsubsection}
For a quasi-abelian category $\mathcal{A}$, the following are equivalent:
\begin{itemize}
\item[(1)] The modular law holds in $\mathcal{A}$.
\item[(2)] For all epi-monics $f:M \to N$ in $\mathcal{A}$ and all strict subobjects $P \hookrightarrow M$, the canonical morphism $P \hookrightarrow f^{-1}\!\left(f(P)\right)$ is an isomorphism.
\item[(3)] For all epi-monics $f: M \to N$ in $\mathcal{A}$ and all strict subobjects $Q \hookrightarrow N$, the canonical morphism $f\!\left(f^{-1}(Q)\right) \hookrightarrow Q$ is an isomorphism.
\end{itemize}
In this case, for any morphism $f: M \to N$ in $\mathcal{A}$ and strict subobjects $P \hookrightarrow M$, $Q \hookrightarrow N$, the morphisms
\begin{equation*}
P + f^{-1}(Q) \hookrightarrow f^{-1}\!\left(f(P) + Q\right)
\end{equation*}
and
\begin{equation*}
f\!\left(f^{-1}(Q) \cap P\right) \hookrightarrow Q \cap f(P)
\end{equation*}
are isomorphisms.
\end{sublemma}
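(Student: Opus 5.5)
The proof will be a chain of implications $(1)\Rightarrow(2)\Rightarrow(1)$ and $(1)\Rightarrow(3)\Rightarrow(1)$, with the supplementary isomorphisms derived from (1) at the end. By duality (\ref{lemma:1}-style, the modular law being self-dual under $\mathcal{A}\leftrightarrow\mathcal{A}^\circ$ because intersections and sums of strict subobjects are exchanged when passing to strict quotients), it should suffice to treat (2), since (3) is its dual statement. Throughout I will use \ref{rmk:6}(1) to identify strict subquotients, and the standard facts that for an epi-monic $f:M\to N$ the pullback $f^{-1}(Q)$ of a strict subobject $Q\hookrightarrow N$ is a strict subobject of $M$, and that $f(P)$ is a strict subobject of $N$.

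\textbf{$(1)\Rightarrow(2)$.} Given an epi-monic $f:M\to N$ and $P\hookrightarrow M$ strict, I will encode $f$ through its graph. Concretely, inside $X\defeq M\oplus N$, I will consider the strict subobjects $\Gamma\defeq\text{Ker}\bigl((f,-1):M\oplus N\to N\bigr)\cong M$, $A\defeq P\oplus 0$, $C\defeq 0\oplus N$, and $B\defeq\Gamma+A$ or a suitable variant. The claim is that $f^{-1}(f(P))$ can be written as $\Gamma\cap(A+C)$ projected back to $M$, while $P$ corresponds to $A'+(\Gamma\cap C)$-type expressions. Here $\Gamma\cap C=0$ because $f$ is monic, and $\Gamma+C=X$. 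I will use a configuration with $A\hookrightarrow B$ such that the modular law $A+(B\cap C)\cong B\cap(A+C)$ translates exactly into $P\cong f^{-1}(f(P))$. The natural choice is the graph of $P\to f(P)$ inside $M\oplus f(P)$, or more simply to work in $X=M\oplus N$ with $B=\Gamma$ and $A=\Gamma_P$, the graph restricted to $P$. The bookkeeping of which sum is an image in $\mathcal{A}$ is the main obstacle, since sums are images and hence only computed up to the Coim/Im discrepancy. I would try first to compute $A+C=\text{Im}(P\oplus N\to M\oplus N)$, which is $P\oplus N$ because $P\hookrightarrow M$ is strict. Then $\Gamma\cap(P\oplus N)$ is the pullback of $P$ along $\Gamma\cong M\to M$ intersected with the graph, which is the graph of $f$ over $f^{-1}$ of something. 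I need to adjust $C$ so that this intersection yields $f^{-1}(f(P))$, for example by taking $C=0\oplus f(P)$ and $A=$ graph of $f|_P$.

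\textbf{$(2)\Rightarrow(1)$.} Given $A\hookrightarrow B\hookrightarrow X$ and $C\hookrightarrow X$, I will factor the relevant maps. Consider $\pi:X\to X/C$, and the morphism $B/(B\cap C)\to (B+C)/C$, which is epi-monic in a quasi-abelian category (it is a $\text{Coim}\to\text{Im}$ map by \ref{rmk:6}(2)). I will apply (2) to this epi-monic with $P=(A+(B\cap C))/(B\cap C)$. Its image is $(A+C)/C$, and its preimage is $(B\cap(A+C))/(B\cap C)$. Then (2) gives the desired isomorphism after lifting via \ref{rmk:6}(1).

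For the final assertions, I will factor an arbitrary $f$ as $M\twoheadrightarrow\text{Coim}f\to\text{Im}f\hookrightarrow N$, with the middle map epi-monic. For a strict epi and a strict mono both identities are standard, by \ref{rmk:6}(1) and the modular law applied to subobjects containing the kernel. For the epi-monic part they follow from (2) and (3), and composing gives the claims.
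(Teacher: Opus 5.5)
\textbf{Gap in $(1)\Rightarrow(2)$.} Your $(2)\Rightarrow(1)$ argument and the reduction of (3) to (2) by duality are what the paper does. Your route to the final assertions differs from the paper's chain of identities but is legitimate: factor $f$ as strict epi, epi-monic, strict mono, check each piece, and compose (images and preimages compose). For the epi-monic piece you should say that (2) and (3) make $g(-)$ and $g^{-1}(-)$ mutually inverse order isomorphisms of the posets of strict subobjects, so they preserve sums and intersections.

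The real problem is $(1)\Rightarrow(2)$. You never fix a configuration, and the one you finally propose does not work. Take $X=M\oplus N$, $B=\Gamma$, $A=\Gamma_P$ and $C=0\oplus f(P)$. Then $B\cap C=0$, $A+C=P\oplus f(P)$, and $B\cap(A+C)=\Gamma\cap(P\oplus f(P))=\Gamma_P=A$. So the modular law only returns the tautology $A\cong A$. Your other candidates built on $C=0\oplus N$ fail the same way. In each of them $A+C$ has $M$-component $P$, so intersecting with $\Gamma$ only sees elements over $P$, and $f^{-1}(f(P))$ never appears. Also, $\Gamma\cap(0\oplus N)=0$ holds for every $f$; monicity of $f$ is not what makes it vanish.

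\textbf{Fix.} Use the other summand: keep $B=\Gamma$ and $A=\Gamma_P$, but take $C=M\oplus 0$.
\begin{itemize}
\item[(a)] $X/C\cong N$, and $f$ is the composite $\Gamma\hookrightarrow X\twoheadrightarrow X/C$.
\item[(b)] $B\cap C\cong\text{Ker}f=0$; this is where monicity of $f$ is used.
\item[(c)] $A+C=M\oplus f(P)$. To see this, precompose $P\oplus M\to M\oplus N$, $(p,m)\mapsto(p+m,f(p))$, with the automorphism $(p,m)\mapsto(p,m-p)$. The result is the direct sum of $\text{id}_M$ and $P\to N$, whose image is $M\oplus f(P)$.
\item[(d)] $B\cap(A+C)=\Gamma\cap(M\oplus f(P))\cong f^{-1}(f(P))$.
\end{itemize}
The modular law $A+(B\cap C)\cong B\cap(A+C)$ is then exactly $P\cong f^{-1}(f(P))$.

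The paper packages this more economically. With this $B$ and $C$, every epi-monic $g$ equals $\text{Coim}(B\to X/C)\to\text{Im}(B\to X/C)$, i.e.\ it has the form $B/(B\cap C)\to(B+C)/C$. Every strict subobject of $B/(B\cap C)$ is $A/(B\cap C)$ for some $B\cap C\hookrightarrow A\hookrightarrow B$. So your own $(2)\Rightarrow(1)$ computation, read backwards, already gives $(1)\Rightarrow(2)$.
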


\begin{proof}
We first remark that it does not require the modular law in order that for a strict morphism $f: M \to N$ in $\mathcal{A}$ and strict subobjects $P \hookrightarrow M$, $Q \hookrightarrow N$, the canonical morphisms
\begin{equation*}
P + \text{Ker}f \hookrightarrow f^{-1}\!\left(f(P)\right)
\end{equation*}
and
\begin{equation*}
f\!\left(f^{-1}(Q)\right) \hookrightarrow Q \cap f(M)
\end{equation*}
are isomorphisms. Hence, as one verifies by a straightforward computation, these identities hold for arbitrary morphisms in $\mathcal{A}$ if and only if they hold for epi-monics in $\mathcal{A}$, which according to our claim is equivalent to the validity of the modular law in $\mathcal{A}$. So if the modular law holds, then for an arbitrary morphism $f$, the claimed isomorphisms give rise to the chain of identities
\begin{equation*}
\begin{split}
f^{-1}\!\left(f(P) + Q\right) &= f^{-1}\!\left(\left(f(P) + Q\right) \cap f(M)\right) = f^{-1}\!\left(f(P) + \left(Q \cap f(M)\right)\right) \\
&= f^{-1}\!\left(f(P) + f\!\left(f^{-1}(Q)\right)\right) = f^{-1}\!\left(f\!\left(P + f^{-1}(Q)\right)\right) \\
&= \left(P + f^{-1}(Q)\right) + \text{Ker}f = P + f^{-1}(Q)
\end{split}
\end{equation*}
and, dually (see below), $f\!\left(f^{-1}(Q) \cap P\right) = Q \cap f(P)$. \\
For the proof of the claim, we will exploit duality. Namely, in the opposite category $\mathcal{A}^\circ$ of $\mathcal{A}$, we have $\left(X/(A \cap B)\right)^\circ = (X/A)^\circ + (X/B)^\circ$ and $\left(X/(A + B)\right)^\circ = (X/A)^\circ \cap (X/B)^\circ$. Consequently, the modular law holds in $\mathcal{A}$ if and only if it holds in $\mathcal{A}^\circ$. Moreover, $f\!\left(f^{-1}(Q)\right) \hookrightarrow Q$ is an isomorphism if and only if $Y/f\!\left(f^{-1}(Q)\right) \twoheadrightarrow Y/Q$ is an isomorphism, and the opposite of the latter is the morphism $(Y/Q)^\circ \hookrightarrow \left(f^\circ\right)^{-1}\!\left(f^\circ\!\left((Y/Q)^\circ\right)\right)$ in $\mathcal{A}^\circ$. It therefore suffices to show the equivalence between (1) and (2). \\
Suppose (2). Given strict subobjects $A \hookrightarrow B \hookrightarrow X$ and $C \hookrightarrow X$, we may consider the epi-monic
\begin{equation*}
f: B/(B \cap C) = \text{Coim}\!\left(B \to X/C\right) \to \text{Im}\!\left(B \to X/C\right) = (B + C)/C
\end{equation*}
and the strict subobject $\left(A + (B \cap C)\right)/(B \cap C) \hookrightarrow B/(B \cap C)$, for which we know that
\begin{equation*}
\begin{split}
\left(A + (B \cap C)\right)/(B \cap C) &= f^{-1}\!\left(f\!\left(\left(A + (B \cap C)\right)/(B \cap C)\right)\right) \\
&= f^{-1}\!\left((A + C)/C\right) = \text{Ker}\!\left(B/(B \cap C) \to (B + C)/(A + C)\right) \\
&= \left(B \cap (A + C)\right)/(B \cap C).
\end{split}
\end{equation*}
We deduce that the modular law is valid in $\mathcal{A}$, that is (1) holds. \\
Conversely, assume (1). We claim that every epi-monic in $\mathcal{A}$ is of the form considered in the previous paragraph. Indeed, let $g: M \to N$ be any morphism in $\mathcal{A}$. Then $g$ factors as $M \stackrel{(\text{id},g)}{\to} M \oplus N \twoheadrightarrow N$, where the second arrow is the canonical one. Now $(\text{id},g)$ is strict monic as the kernel of $M \oplus N \stackrel{(-g,\text{id})}{\to} N$, and if we put $(B \hookrightarrow X) \defeq (\text{id},g)$, $(C \hookrightarrow X) \defeq \left(M \hookrightarrow M \oplus N\right)$, we have $g = \left(B \to X/C\right)$ and further $\left(\text{Coim}g \to \text{Im}g\right) = B/(B \cap C) \to (B + C)/C$ as claimed. By \ref{rmk:6}(1), we can write every strict subobject of $B/(B \cap C)$ in the form $A/(B \cap C)$ for some $B \cap C \hookrightarrow A \hookrightarrow B$. From this, we see that the validity of the modular law implies (2).
\end{proof}

\begin{subexample}
\namedlabel{ex:2}{Example \thesubsubsection}
We revisit \ref{ex:8}. In the quasi-abelian category of Hausdorff topological abelian groups, consider the morphism $f: \mathbb{R}_\text{disc} \to \mathbb{R}$ between the real numbers with the discrete topology and the real numbers with their usual topology which is given by the identity on underlying sets. This morphism is epi-monic, however not an isomorphism. Now $\mathbb{Q}_\text{disc} \hookrightarrow \mathbb{R}_\text{disc}$ is a closed subgroup and thus a strict subobject, for which $f\!\left(\mathbb{Q}_\text{disc}\right)$ is the closure of the subgroup $\mathbb{Q} \subset \mathbb{R}$, hence $f\!\left(\mathbb{Q}_\text{disc}\right) = \mathbb{R}$. But then $f^{-1}\!\left(f\!\left(\mathbb{Q}_\text{disc}\right)\right) = \mathbb{R}_\text{disc}$. So by \ref{lemma:2}, the modular law does not hold in Hausdorff topological abelian groups.
\end{subexample}

\begin{intermediate}
Recall \ref{def:3}.
\end{intermediate}

\begin{subproposition}
\namedlabel{prop:4}{Proposition \thesubsubsection}
The modular law holds in the quasi-abelian category $\mathcal{A}$ if and only if for all epi-monics $f: M \to N$ in $W\mathcal{A}^\text{pre}$, we have that $f$ is strictly compatible with the filtrations precisely when it is strictly co-compatible with the filtrations.
\end{subproposition}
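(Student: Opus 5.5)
Both directions go through \ref{lemma:2}. The plan is to compare, for an epi-monic $f: M \to N$ in $W\mathcal{A}^\text{pre}$, the two conditions filtration step by filtration step.

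First we make the objects involved explicit. Since $f$ is epi-monic in $W\mathcal{A}^\text{pre}$, its underlying morphism $f: M \to N$ in $\mathcal{A}$ is epi-monic as well; this uses that kernels are computed in the functor category and cokernels on the colimit. Hence $\text{Ker}_\mathcal{A}f = 0$ and $f(M) = N$, and \ref{def:3} simplifies. Strict compatibility says that $f(W_{\leq\lambda}M) \hookrightarrow W_{\leq\lambda}N$ is an isomorphism for all $\lambda$. Strict co-compatibility says that $W_{\leq\lambda}M \hookrightarrow f^{-1}(W_{\leq\lambda}N)$ is an isomorphism for all $\lambda$.

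For ($\Rightarrow$), assume the modular law. If $f$ is strictly compatible, then $W_{\leq\lambda}N = f(W_{\leq\lambda}M)$, so by \ref{lemma:2}(2) we get $f^{-1}(W_{\leq\lambda}N) = f^{-1}(f(W_{\leq\lambda}M)) = W_{\leq\lambda}M$, which is co-compatibility. Conversely, if $f$ is co-compatible, then $W_{\leq\lambda}M = f^{-1}(W_{\leq\lambda}N)$, and \ref{lemma:2}(3) gives $f(W_{\leq\lambda}M) = f(f^{-1}(W_{\leq\lambda}N)) = W_{\leq\lambda}N$.

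For ($\Leftarrow$), suppose the modular law fails. By \ref{lemma:2} there is an epi-monic $g: M \to N$ in $\mathcal{A}$ and a strict subobject $P \hookrightarrow M$ with $P \subsetneq g^{-1}(g(P))$. We build a two-step filtered counterexample. Fix $\lambda_0 < \lambda_1$. Filter $M$ by $W_{\leq\lambda}M = 0$ for $\lambda < \lambda_0$, $W_{\leq\lambda}M = P$ for $\lambda_0 \leq \lambda < \lambda_1$, and $W_{\leq\lambda}M = M$ for $\lambda \geq \lambda_1$. Filter $N$ in the same way, with $g(P)$ in the middle step. Then $g$ respects the filtrations. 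It is epi-monic in $W\mathcal{A}^\text{pre}$: it is monic because it is levelwise monic, and it is epi because the cokernel in $W\mathcal{A}^\text{pre}$ has underlying object $\text{Coker}_\mathcal{A}g = 0$. It is strictly compatible, since at the middle step $g(P) \to g(P)$ is the identity and the outer steps are trivial. It is not co-compatible, since $P \hookrightarrow g^{-1}(g(P))$ is not an isomorphism. So the equivalence fails.

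The main obstacle is pinning down the shapes of $\text{Im}f$ and $\text{Coim}f$ in $W\mathcal{A}^\text{pre}$ for epi-monics, that is, confirming the formulas of \ref{def:3} via \ref{prop:1} and checking that cokernels and epimorphisms in $W\mathcal{A}^\text{pre}$ are detected on underlying objects. Once that is done, the remaining steps are direct applications of \ref{lemma:2}.
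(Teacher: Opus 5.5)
Your proposal is correct and follows essentially the paper's proof: the forward direction via the identities of \ref{lemma:2}, and the converse via the two-step filtration $0 \subset P \subset M$, $0 \subset g(P) \subset N$, which is the paper's morphism $\alpha$. The differences are minor. The paper runs the forward direction for arbitrary morphisms using the general identities at the end of \ref{lemma:2}, and in the converse it also uses the dual example $\beta$. Your contrapositive use of $\alpha$ alone already suffices, since \ref{lemma:2} gives (1)$\iff$(2) on its own.
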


\begin{proof}
Let $f: M \to N$ be any morphism in $W\mathcal{A}^\text{pre}$. If the modular law holds in $\mathcal{A}$, then by \ref{lemma:2}, the morphisms $a$, $b$ in
\begin{equation*}
\begin{split}
&f\!\left(W_{\leq\lambda}M\right) = f\!\left(W_{\leq\lambda}M + \text{Ker}_\mathcal{A}f\right) \hookrightarrow f\!\left(f^{-1}\!\left(W_{\leq\lambda}N\right)\right) \stackrel{a}{\hookrightarrow} f(M) \cap W_{\leq\lambda}N \\
&W_{\leq\lambda}M + \text{Ker}_\mathcal{A}f \stackrel{b}{\hookrightarrow }f^{-1}\!\left(f\!\left(W_{\leq\lambda}M\right)\right) \hookrightarrow f^{-1}\!\left(f(M) \cap W_{\leq\lambda}N\right) = f^{-1}\!\left(W_{\leq\lambda}N\right)
\end{split}
\end{equation*}
are isomorphisms for all $\lambda$. Therefore, if the modular law holds in $\mathcal{A}$, $f$ is strictly compatible with the filtrations precisely when it is strictly co-compatible with the filtrations. \\
Conversely, suppose that strict compatibility is equivalent to strict co-compatibility for epi-monics in $W\mathcal{A}^\text{pre}$. Let $f: M \to N$ be an epi-monic in $\mathcal{A}$ and let $P \hookrightarrow M$, $Q \hookrightarrow N$ be strict subobjects. According to \ref{lemma:2}, in order to verify that the modular law holds in $\mathcal{A}$, we need to show that $P = f^{-1}\!\left(f(P)\right)$ and $f\!\left(f^{-1}(Q)\right) = Q$. But
\[\begin{tikzcd}
{\alpha:} & M & N & {\beta:} & M & N \\
& P & {f(P)} && {f^{-1}(Q)} & Q \\
& 0 & 0 && 0 & 0
\arrow["f", from=1-2, to=1-3]
\arrow["f", from=1-5, to=1-6]
\arrow[hook', from=2-2, to=1-2]
\arrow[from=2-2, to=2-3]
\arrow[hook', from=2-3, to=1-3]
\arrow[hook', from=2-5, to=1-5]
\arrow[from=2-5, to=2-6]
\arrow[hook', from=2-6, to=1-6]
\arrow[hook', from=3-2, to=2-2]
\arrow[equals, from=3-2, to=3-3]
\arrow[hook', from=3-3, to=2-3]
\arrow[hook', from=3-5, to=2-5]
\arrow[equals, from=3-5, to=3-6]
\arrow[hook', from=3-6, to=2-6]
\end{tikzcd}\]
are epi-monics in $W\mathcal{A}^\text{pre}$ and $\alpha$ is strictly compatible with the filtrations, hence by assumption $\alpha$ is also strictly co-compatible with the filtrations, which amounts to $P = f^{-1}\!\left(f(P)\right)$. Analogously, $\beta$ is strictly co-compatible with the filtrations, so by assumption also strictly compatible, which means that $f\!\left(f^{-1}(Q)\right) = Q$.
\end{proof}

\subsection{Quasi-abelianess}

Let $\mathcal{A}$ be a quasi-abelian category and let $W\mathcal{A}^\text{pre}$, as in \ref{def:1}, be the category of finite $\Lambda$-indexed filtrations on $\mathcal{A}$. We show that this category is quasi-abelian precisely when the modular law holds in $\mathcal{A}$. In contrast, $W\mathcal{A}^\text{pre}$ is abelian only if $\mathcal{A}$ is trivial.

\begin{subproposition}
\namedlabel{prop:1}{Proposition \thesubsubsection}
$W\mathcal{A}^\text{pre}$ is an additive category with kernels and cokernels. It is quasi-abelian if and only if the modular law holds in $\mathcal{A}$.
\end{subproposition}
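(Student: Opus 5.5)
I will first construct kernels and cokernels in $W\mathcal{A}^\text{pre}$ explicitly. For $f:(M,W)\to(N,W)$, the kernel is computed in $\text{Fun}(\Lambda,\mathcal{A})$: take $K\defeq\text{Ker}_\mathcal{A}f$ with $W_{\leq\lambda}K\defeq K\cap W_{\leq\lambda}M$. These are strict subobjects of $K$ because pullbacks of strict monics are strict monics. Separatedness and right continuity are inherited, since the filtration on $M$ is finite and intersections commute with the relevant limits. The cokernel is $C\defeq\text{Coker}_\mathcal{A}f$ with $W_{\leq\lambda}C\defeq\text{Im}\bigl(W_{\leq\lambda}N\to C\bigr)=\bigl(W_{\leq\lambda}N+f(M)\bigr)/f(M)$, using \ref{rmk:6}(1). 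The universal properties are checked stepwise. A morphism $g:N\to P$ with $gf=0$ factors through $C$ in $\mathcal{A}$. Each $W_{\leq\lambda}N\to W_{\leq\lambda}P$ then factors through the image of $W_{\leq\lambda}N$ in $C$, because images are the smallest strict subobjects through which a morphism factors. Finiteness of the filtrations is clear in both cases, and additivity is inherited from $\text{Fun}(\Lambda,\mathcal{A})$.

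For quasi-abelianness, I use the following description. A strict monic in $W\mathcal{A}^\text{pre}$ is a morphism $i:(A,W')\to(X,W)$ that is strict monic in $\mathcal{A}$ with $W'_{\leq\lambda}A=A\cap W_{\leq\lambda}X$. Dually, a strict epi $p:(X,W)\to(Y,W'')$ is a cokernel in $\mathcal{A}$ with $W''_{\leq\lambda}Y=p(W_{\leq\lambda}X)$.

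Assume the modular law. To show that cokernels are stable under pullback, take such a $p$ and $g:(Z,W)\to(Y,W'')$. The pullback $E=X\times_YZ$ carries the filtration $W_{\leq\lambda}E=E\cap\bigl(W_{\leq\lambda}X\oplus W_{\leq\lambda}Z\bigr)$. The projection $q:E\to Z$ is a cokernel in $\mathcal{A}$ by quasi-abelianness of $\mathcal{A}$. It remains to show $q(W_{\leq\lambda}E)=W_{\leq\lambda}Z$. Given the description of $E$, this reduces to the identity $p\bigl(p^{-1}(Q)\cap P\bigr)=Q\cap p(P)$ for $P=W_{\leq\lambda}X$ and $Q=g(W_{\leq\lambda}Z)$. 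That identity is supplied by \ref{lemma:2} under the modular law. Stability of kernels under pushout follows by the dual argument. Alternatively, I would check that the equivalence $o_\mathcal{A}$ of \ref{lemma:1} interchanges the two conditions, and that the modular law is self-dual, as shown in the proof of \ref{lemma:2}.

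For the converse, assume $W\mathcal{A}^\text{pre}$ is quasi-abelian. Take an epi-monic $f:M\to N$ in $\mathcal{A}$ and a strict subobject $Q\hookrightarrow N$. I aim to prove $f(f^{-1}(Q))=Q$, which gives the modular law by \ref{lemma:2}(3). Equip $N$ with the two-step filtration $0\subset Q\subset N$. The identity $(N,W_\text{triv})\to(N,Q\subset N)$ (trivial filtration with a single break) is a cokernel in $W\mathcal{A}^\text{pre}$. Pulling it back along $f:(M,W_\text{triv})\to(N,Q\subset N)$ yields, by the explicit formula above, the map $(M,f^{-1}(Q)\subset M)\to(N,W_\text{triv})$ with underlying morphism $f$. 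Stability of cokernels under pullback forces this to be a strict epi in $W\mathcal{A}^\text{pre}$. In particular $f$ is an isomorphism in $\mathcal{A}$, and the image filtration $f(f^{-1}(Q))$ must equal $Q$.

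This last step is the main obstacle, and I would double-check the computation of the pullback filtration there. If the example degenerates, I would instead pull back the strict epi $(N\oplus N,\ldots)\to(N,Q\subset N)$ built from the pair, mimicking the graph trick in the proof of \ref{lemma:2}. This would realise the modular-law failure via the epi-monic $\alpha$ from the proof of \ref{prop:4}.
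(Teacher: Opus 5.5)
The kernels and cokernels, and your description of strict monics and strict epis in $W\mathcal{A}^\text{pre}$, are fine. The converse direction, however, breaks down at its first step. By that very description, a strict epi carries the image filtration. So a morphism whose underlying map is $\text{id}_N$ is a cokernel in $W\mathcal{A}^\text{pre}$ only if source and target filtrations agree. The identity $(N,W_\text{triv})\to(N,Q\subset N)$ is therefore a non-strict epi-monic of the kind exhibited in \ref{rmk:1}, not a cokernel. Moreover, the base change of a strict epi $\pi$ along $f$ is the leg of the pullback square parallel to $\pi$, landing in the domain of $f$; it is not the leg with underlying morphism $f$. The argument also proves too much: it would show that whenever $W\mathcal{A}^\text{pre}$ is quasi-abelian, every epi-monic of $\mathcal{A}$ is invertible. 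That fails for $\mathcal{A}=\mathcal{B}_1$ of \ref{ex:4}, where the modular law holds by \ref{ex:8} and \ref{rmk:1} provides non-invertible epi-monics. The fallback via $(N\oplus N,\ldots)$ is not yet an argument. What is missing is an explicit (co)base change, manufactured from a failure of the modular law, that destroys strictness. The paper does this in \ref{ex:7}. Replacing $A'$ by $A\defeq A'+(B\cap C)$, one may assume that $B\cap C\hookrightarrow X$ factors through $A$ while $A\hookrightarrow B\cap(A+C)$ is not an isomorphism. Then the pushout of the strict monic $(B\cap C\hookrightarrow B)\to(C\hookrightarrow X)$ along $(B\cap C\hookrightarrow B)\to(A\hookrightarrow B)$ is $(A\hookrightarrow B)\to(A+C\hookrightarrow X)$. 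This is not strict monic, since the filtration induced on $B$ is $B\cap(A+C)$. As quasi-abelianness requires kernels to be stable under pushout, this already suffices; no pullback is needed.

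Your forward direction is a dual version of the paper's pushout argument, but the asserted reduction hides a step. The identity $p\bigl(p^{-1}(Q)\cap P\bigr)=Q\cap p(P)=Q$ only says that $P'\defeq p^{-1}(Q)\cap P\to Q$ is an epimorphism, while $W_{\leq\lambda}E=P'\times_Q W_{\leq\lambda}Z$. You still need this epimorphism to remain epi after base change along $W_{\leq\lambda}Z\to Q$, which is false in general quasi-abelian categories. For example, in Hausdorff abelian groups the epi-monic $\mathbb{Z}^2\to\mathbb{R}$, $(a,b)\mapsto a+b\sqrt{2}$, pulls back along $\sqrt{3}\mathbb{Z}\hookrightarrow\mathbb{R}$ to $0\to\sqrt{3}\mathbb{Z}$. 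Under the modular law the step can be supplied with the other identity of \ref{lemma:2}. The morphism $\phi_\lambda\defeq(p,-g)\colon P\oplus W_{\leq\lambda}Z\to Y$ factors through $p(P)=W_{\leq\lambda}Y$. Hence $(P\oplus 0)+\text{Ker}\,\phi_\lambda=\phi_\lambda^{-1}\bigl(p(P)\bigr)=P\oplus W_{\leq\lambda}Z$. Applying the projection onto $W_{\leq\lambda}Z$, whose kernel is $P\oplus 0$, shows that $W_{\leq\lambda}E=\text{Ker}\,\phi_\lambda$ maps epimorphically onto $W_{\leq\lambda}Z$. With this repair, the forward half goes through.
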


\begin{proof}
It is clear that $W\mathcal{A}^\text{pre}$ has biproducts and kernels, which are preserved by the inclusion $W\mathcal{A}^\text{pre} \hookrightarrow \text{Fun}(\Lambda,\mathcal{A})$. To get additivity, one checks that if $f,g: M \to N$ are morphisms in $W\mathcal{A}^\text{pre}$, their difference $f-g$ in $\mathcal{A}$ respects the filtrations. For the cokernel of $f$ in $W\mathcal{A}^\text{pre}$, the formula $W_{\leq\lambda}\text{Coker}f = \text{Im}_\mathcal{A}\!\left(W_{\leq\lambda}N \to \text{Coker}_\mathcal{A}f\right) = \left(W_{\leq\lambda}N + f(M)\right)/f(M)$ holds. If $\mathcal{A}$ is abelian, the epi-monic $\text{Coim}_\mathcal{A}\!\left(W_{\leq\lambda}N \to \text{Coker}_\mathcal{A}f\right) = W_{\leq\lambda}N/\left(W_{\leq\lambda}N \cap f(M)\right) \to W_{\leq\lambda}\text{Coker}f$ is an isomorphism, and a proof of quasi-abelianess of $W\mathcal{A}^\text{pre}$ in this case is given in \cite[Theorem 3.9]{MISC:1}. \\
Suppose that the modular law holds in the quasi-abelian category $\mathcal{A}$. In order to verify that then $W\mathcal{A}^\text{pre}$ is quasi-abelian, it suffices to show that strict monics are stable under pushout as the dual statement follows from \ref{lemma:1}. \\
Given a pushout square in $W\mathcal{A}^\text{pre}$ as displayed on the left with $\iota: M' \hookrightarrow M$ strict monic, one can form the pushout of the same diagram in $\text{Fun}(\Lambda,\mathcal{A})$ to obtain the square displayed on the right with $\kappa: N' \hookrightarrow N''$ strict monic, and it holds that $\text{colim}~N''= N$ in $\mathcal{A}$ as well as $W_{\leq\lambda}N = \text{Im}_\mathcal{A}\!\left(N''(\lambda) \to N\right)$ for all $\lambda$:
\[\begin{tikzcd}
{M'} & M & {M'} & M \\
{N'} & N & {N'} & {N''}
\arrow["\iota", hook, from=1-1, to=1-2]
\arrow[from=1-1, to=2-1]
\arrow[from=1-2, to=2-2]
\arrow["\iota", hook, from=1-3, to=1-4]
\arrow[from=1-3, to=2-3]
\arrow[from=1-4, to=2-4]
\arrow["\sigma"', from=2-1, to=2-2]
\arrow["\lrcorner"{anchor=center, pos=0.125, rotate=180}, draw=none, from=2-2, to=1-1]
\arrow["\kappa"', hook, from=2-3, to=2-4]
\arrow["\lrcorner"{anchor=center, pos=0.125, rotate=180}, draw=none, from=2-4, to=1-3]
\end{tikzcd}\]
Let $Q$ denote the cokernel of $\kappa$ in $\text{Fun}(\Lambda,\mathcal{A})$. It is isomorphic to the cokernel $P$ of $\iota$ in $\text{Fun}(\Lambda,\mathcal{A})$, and as $\iota$ is strict monic in $W\mathcal{A}^\text{pre}$, the morphisms $Q(\lambda) \to Q$ in $\mathcal{A}$ are at least monic. It follows that $W_{\leq\lambda}N' = \text{Ker}_\mathcal{A}\!\left(N''(\lambda) \to Q(\lambda)\right) = \text{Ker}_\mathcal{A}\!\left(N''(\lambda) \to Q\right)$, so that $W_{\leq\lambda}N' \hookrightarrow N''(\lambda)$ is the pullback in $\mathcal{A}$ of $N' \hookrightarrow N$ along $N''(\lambda) \to N$. We obtain the following pullback squares in $\mathcal{A}$ with $f$ epi-monic:
\[\begin{tikzcd}
{W_{\leq\lambda}N'} & {f^{-1}\!\left(N' \cap W_{\leq\lambda}N\right)} & {N' \cap W_{\leq\lambda}N} \\
{N''(\lambda)} & {\text{Coim}_\mathcal{A}\!\left(N''(\lambda) \to N\right)} & {W_{\leq\lambda}N}
\arrow["\cong", from=1-1, to=1-2]
\arrow[hook, from=1-1, to=2-1]
\arrow["\lrcorner"{anchor=center, pos=0.125}, draw=none, from=1-1, to=2-2]
\arrow["g", hook, from=1-2, to=1-3]
\arrow[hook, from=1-2, to=2-2]
\arrow["\lrcorner"{anchor=center, pos=0.125}, draw=none, from=1-2, to=2-3]
\arrow[hook, from=1-3, to=2-3]
\arrow[two heads, from=2-1, to=2-2]
\arrow["f"', from=2-2, to=2-3]
\end{tikzcd}\]
According to \ref{lemma:2}, $g$ is an isomorphism when the modular law holds in $\mathcal{A}$. Then we have $W_{\leq\lambda}N' = N' \cap W_{\leq\lambda}N$, which means that $\sigma$ is strict monic in $W\mathcal{A}^\text{pre}$. \\
\ref{ex:7} below shows that the validity of the modular law in $\mathcal{A}$ is not only a sufficient, but also a necessary condition for $W\mathcal{A}^\text{pre}$ to be quasi-abelian. This completes the proof of the proposition.
\end{proof}

\begin{subexample}
\namedlabel{ex:7}{Example \thesubsubsection}
Suppose $A \hookrightarrow B \hookrightarrow X$ and $C \hookrightarrow X$ are strict subobjects in a quasi-abelian category $\mathcal{A}$ such that $B \cap C \hookrightarrow X$ factors through $A \hookrightarrow X$ and such that $A \hookrightarrow B \cap (A + C)$ is not an isomorphism. The following is a pushout in $W\mathcal{A}^\text{pre}$:
\[\begin{tikzcd}
{(B \cap C \hookrightarrow B)} & {(C \hookrightarrow X)} \\
{(A \hookrightarrow B)} & {(A + C \hookrightarrow X)}
\arrow[hook, from=1-1, to=1-2]
\arrow[from=1-1, to=2-1]
\arrow[from=1-2, to=2-2]
\arrow[from=2-1, to=2-2]
\arrow["\lrcorner"{anchor=center, pos=0.125, rotate=180}, draw=none, from=2-2, to=1-1]
\end{tikzcd}\]
By design, the bottom map is not strict. \\
When the modular does not hold in $\mathcal{A}$, one can always find strict subobjects as above: If it fails for $A' \hookrightarrow B \hookrightarrow X$, $C \hookrightarrow X$, replace $A'$ by $A \defeq A' + (B \cap C)$. An example is \cite[Example 2.9]{ARTICLE:4} showing that the modular law does not hold in the category of Hausdorff topological abelian groups, where one takes $X \defeq \mathbb{R}$, $A \defeq 2\mathbb{Z}$, $B \defeq \mathbb{Z}$ and $C \defeq \sqrt{2}\mathbb{Z}$.
\end{subexample}

\begin{subremark}
\namedlabel{rmk:1}{Remark \thesubsubsection}
Except in the trivial case, the category $W\mathcal{A}^\text{pre}$ is not abelian. Indeed, as soon as there is a nonzero object $M$ in the quasi-abelian category $\mathcal{A}$, the morphism of filtered objects
\[\begin{tikzcd}
M & M \\
0 & M
\arrow[equals, from=1-1, to=1-2]
\arrow[hook', from=2-1, to=1-1]
\arrow[hook, from=2-1, to=2-2]
\arrow[equals, from=2-2, to=1-2]
\end{tikzcd}\]
is an example of an epi-monic in $W\mathcal{A}^\text{pre}$ which is not an isomorphism. \\
We analyse when a morphism $f: M \to N$ in $W\mathcal{A}^\text{pre}$ is strict. On the one hand, $f$ is strict if and only if the monic
\begin{equation*}
\varphi_{\leq\lambda}: W_{\leq\lambda}\text{Coim}f = \frac{W_{\leq\lambda}M + \text{Ker}_\mathcal{A}f}{\text{Ker}_\mathcal{A}f} \to W_{\leq\lambda}N \cap f(M) = W_{\leq\lambda}\text{Im}f
\end{equation*}
is an isomorphism for all $\lambda$. On the other hand, $f$ is strictly compatible with the filtrations (\ref{def:3}) if and only if $\varphi_{\leq\lambda}$ is epi for all $\lambda$. When the underlying morphism of $f$ in $\mathcal{A}$ is strict, $\varphi_{\leq\lambda}$ is even strict monic and hence it is an isomorphism as soon as it is epi. So in this situation, $f$ is strict in $W\mathcal{A}^\text{pre}$ if and only if it is strictly compatible with the filtrations, if and only if (as one shows by duality or by hand) it is strictly co-compatible with the filtrations.
\end{subremark}

\subsection{The quasi-tannakian case}
\namedlabel{app:A.3}{Appendix \thesubsection}

In this section, consider a \textit{quasi-tannakian category} \cite[Definition 2.1.1]{ARTICLE:1} $\mathcal{A}$, that is a rigid symmetric monoidal, quasi-abelian category, linear over $k = \text{End}(\mathbf{1})$ ($\mathbf{1}$ the tensor unit), such that $k$ is a field of characteristic zero, and which admits an exact, faithful, $k$-linear tensor functor $\omega$ (the fiber functor) to the category of finite-dimensional vector spaces over some extension field $K$ of $k$. It follows from \cite[Lemma 2.1.5(2)]{ARTICLE:1} that $\omega$ is automatically strongly exact. \\
For every $M \in \mathcal{A}$, we fix a choice of the essentially unique dual $M^\lor$, the evaluation map $\text{ev}_M: M \otimes M^\lor \to \mathbf{1}$ and the co-evaluation map $\text{coev}_M: \mathbf{1} \to M^\lor \otimes M$. The assumption on the characteristic of $k$ ensures that the quasi-tannakian rank, which is defined by $\text{rk}(M) \defeq \left(\text{ev}_M \circ b_{M^\lor\!,M} \circ \text{coev}_M\right) \in \text{End}(\mathbf{1})$ ($b$ the braiding), is a rank function on $\mathcal{A}$. Indeed, the formula $\omega\!\left(\text{rk}(M)\right)\!\left(1 \in K\right) = \text{dim}_K\!\left(\omega(M)\right)$ holds. \\

When in the definition of a quasi-tannakian category, $\mathcal{A}$ is abelian, one recovers one of the characterisations of a tannakian category. \\

Here, for an arbitrary quasi-tannakian category $\mathcal{A}$, we construct a quasi-tannakian structure on the category $W\mathcal{A}^\text{pre}$ and show that, when $\mathcal{A}$ is endowed with a $\otimes$-multiplicative slope function for the quasi-tannakian rank, the category $W\mathcal{A}$ becomes tannakian.

\begin{subconstruction}
\namedlabel{cstr:2}{Construction \thesubsubsection}
$\text{}$
\begin{itemize}
\item[(1)] $W\mathcal{A}^\text{pre}$ inherits the structure of a symmetric monoidal category with tensor product
\begin{equation*}
(M,W) \otimes \left(M',W'\right) \defeq \left(M \otimes M', \left(\sum_{\nu + \nu' = \lambda} W_{\leq\nu}M \otimes W'_{\leq\nu'}M'\right)_\lambda\right)
\end{equation*}
and tensor unit $\mathbf{1}'$ given by the tensor unit $\mathbf{1}$ of $\mathcal{A}$ equipped with the unique filtration which has a single break at 0. Note that the associator, the unitors and the braiding in $\mathcal{A}$ respect all filtrations by naturality, and that the $\text{End}\!\left(\mathbf{1}'\right) = k$-linear structure on $\text{Hom}_{W\mathcal{A}^\text{pre}}\!\left((M,W),\left(M',W'\right)\right)$ is the structure of sub-vector space of $\text{Hom}_\mathcal{A}\!\left(M,M'\right)$.  \\
Given $(M,W) \in W\mathcal{A}^\text{pre}$, put
\begin{equation*}
(M,W)^\lor \defeq \left(M^\lor,\left(\left(M/W_{<-\lambda}M\right)^\lor\right)_\lambda\right) \in W\mathcal{A}^\text{pre}.
\end{equation*}
In \ref{lemma:4}(1),(3) below, we will verify that, with respect to this filtration on $M^\lor$, the evaluation map $\text{ev}_M: M \otimes M^\lor \to \mathbf{1}$ and the coevaluation map $\text{coev}_M: \mathbf{1} \to M^\lor \otimes M$ are morphisms of filtered objects, so that $(M,W)^\lor$ is a dual for $(M,W)$ in $W\mathcal{A}^\text{pre}$. Hence $W\mathcal{A}^\text{pre}$ is rigid. \\
Furthermore, $W\mathcal{A}^\text{pre}$ admits a fiber functor given by the composition of the fiber functor $\omega$ of $\mathcal{A}$ with the exact, faithful, linear tensor functor $W\mathcal{A}^\text{pre} \to \mathcal{A}$. \\
Since $\omega$ is strongly exact and reflects epis, the modular law holds in $\mathcal{A}$. So according to \ref{prop:1}, $W\mathcal{A}^\text{pre}$ is quasi-abelian. In total, $W\mathcal{A}^\text{pre}$ is again a quasi-tannakian category.
\item[(2)] If the quasi-tannakian category $\mathcal{A}$ is endowed with a slope function (with respect to the quasi-tannakian rank) which is \textit{$\otimes$-multiplicative} \cite [Definition 2.3.1]{ARTICLE:1}, then $W\mathcal{A}$ becomes a tannakian category. \\
Concretely, $\otimes$-multiplicativity means that if $M_1$, $M_2$ are semistable of slopes $\lambda_1$, $\lambda_2$, then $M_1 \otimes M_2$ is semistable of slope $\lambda_1 + \lambda_2$. As a consequence, $W\mathcal{A} \subset W\mathcal{A}^\text{pre}$ is closed under the tensor product and contains the tensor unit: The former is true because, as we will show in \ref{lemma:4}(2), the associated graded $\text{gr}: W\mathcal{A}^\text{pre} \to \mathcal{A}$ is a tensor functor. The latter amounts to $\mathbf{1} \in \mathcal{A}$ being semistable of slope 0. $\mu(\mathbf{1}) = 0$ follows from \cite[Theorem 2.3.3(2b), Theorem 2.2.9(1)]{ARTICLE:1} and if $i: U \hookrightarrow \mathbf{1}$ is the universal destabilising subobject, the strict monic $U \otimes i: U \otimes U \hookrightarrow U$ shows that $\mu(U) \leq 0$, hence $\mathbf{1} \in \mathcal{A}(0)$. \\
For rigidity, it suffices to check that $W\mathcal{A}$ is closed under taking duals in $W\mathcal{A}^\text{pre}$. Indeed, for $(M,W) \in W\mathcal{A}^\text{pre}$, we know that $\text{gr}_\lambda\!\left((M,W)^\lor\right) = \left(\text{gr}^W_{-\lambda}M\right)^\lor$ and deduce from \cite[Theorem 2.3.3(2b), Theorem 2.2.9(2)]{ARTICLE:1} that $(M,W) \in W\mathcal{A}$ if and only if $(M,W)^\lor \in W\mathcal{A}$.
\end{itemize}
\end{subconstruction}

\begin{intermediate}
We fill in the proof of the claims from \ref{cstr:2}.
\end{intermediate}

\begin{sublemma}
\namedlabel{lemma:4}{Lemma \thesubsubsection}
Let $\mathcal{A}$ be a rigid symmetric monoidal, quasi-abelian category. Equip $W\mathcal{A}^\text{pre}$ with the structure of a symmetric monoidal category from \ref{cstr:2}(1), and given $(M,W) \in W\mathcal{A}^\text{pre}$, define a filtration on the dual $M^\lor$ in $\mathcal{A}$ by $W_{\leq\lambda}\!\left(M^\lor\right) \defeq \left(M/W_{<-\lambda}M\right)^\lor$.
\begin{itemize}
\item[(1)] The evaluation map $\text{ev}_M: M \otimes M^\lor \to \mathbf{1}$ in $\mathcal{A}$ is a morphism of filtered objects. 
\item[(2)] The associated graded $\text{gr}:W\mathcal{A}^\text{pre} \to \mathcal{A}$ is a tensor functor.
\item[(3)] Also the coevaluation map $\text{coev}_M: \mathbf{1} \to M^\lor \otimes M$ respects the filtrations.
\end{itemize}
\end{sublemma}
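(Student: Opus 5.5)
The plan is to treat (1), (2), (3) as statements about the filtrations induced on $M \otimes M^\lor$, $\mathbf{1}$ and $M^\lor \otimes M$. Throughout, finiteness of $W$ lets us reduce everything to finitely many breaks. The one structural input is that in a rigid quasi-abelian category, $-\otimes X$ is exact and preserves strict monics and strict epis, since it has both adjoints $-\otimes X^\lor$ and $\operatorname{Hom}(X,-)$. Consequently $\otimes$ sends strict subobjects to strict subobjects, and dualisation $(-)^\lor$ is an exact anti-equivalence exchanging strict subobjects and strict quotients. In particular $W_{\leq\lambda}(M^\lor) = (M/W_{<-\lambda}M)^\lor$ is the annihilator of $W_{<-\lambda}M$: it is the kernel of $M^\lor \to (W_{<-\lambda}M)^\lor$.

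For (1), we must show that $W_{\leq\nu}M \otimes W_{\leq\nu'}(M^\lor) \to M\otimes M^\lor \to \mathbf{1}$ vanishes whenever $\nu+\nu' < 0$; the target filtration has its single break at $0$, so only the step $\lambda<0$ needs checking. If $\nu+\nu'<0$, then $\nu < -\nu'$, so $W_{\leq\nu}M \hookrightarrow W_{<-\nu'}M$. By naturality of $\text{ev}$, the restriction of $\text{ev}_M$ to $W_{<-\nu'}M \otimes (M/W_{<-\nu'}M)^\lor$ factors through $W_{<-\nu'}M \otimes (W_{<-\nu'}M)^\lor$ via the zero composite $(M/W_{<-\nu'}M)^\lor \to M^\lor \to (W_{<-\nu'}M)^\lor$. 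Hence it vanishes. Since the sum defining the tensor filtration is an image of a direct sum, vanishing on each summand suffices.

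For (2), I define the comparison map $\bigoplus_{\nu+\nu'=\lambda}\text{gr}_\nu M \otimes \text{gr}_{\nu'}M' \to \text{gr}_\lambda(M\otimes M')$ and show it is an isomorphism. To do this, I choose a refinement: the filtrations are finite, and exactness of $\otimes$ lets me induct on the number of breaks of $W$ on $M$. Removing the top break gives the short exact sequence $0\to W_{<\lambda_r}M\to M\to \text{gr}_{\lambda_r}M\to 0$, and tensoring with $(M',W')$ should yield a short exact sequence in $\text{Fun}(\Lambda,\mathcal{A})$ compatible with the tensor filtrations. The main obstacle is exactly this step. It requires the filtration $\sum_{\nu+\nu'=\lambda}W_{\leq\nu}M\otimes W'_{\leq\nu'}M'$ to restrict correctly, that is, $(W_{<\lambda_r}M\otimes M') \cap W_{\leq\lambda}(M\otimes M')$ must equal the induced tensor filtration, and this is a modular-law type identity. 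I would first try splitting the problem after applying the fiber functor, since $\omega$ is strongly exact and faithful and reflects isomorphisms of strict subobjects; there everything is vector spaces, and choosing splittings compatible with the filtrations makes the identity obvious. Where $\omega$ is unavailable (the lemma only assumes rigidity), I would fall back on the fact that $-\otimes X$ preserves pullbacks of strict monics and argue in $\mathcal{L}_\mathcal{A}$, where the modular law holds. The symmetric monoidal coherence isomorphisms then follow from naturality.

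For (3), I use duality. Under the equivalence of \ref{lemma:1}, the filtration on $M^\lor$ corresponds to the opposite filtration, and $\text{coev}_M$ is the transpose of $\text{ev}_{M^\lor}$ (up to the braiding and the canonical identification $M^{\lor\lor}\cong M$). We check that this identification is filtered: $(M^\lor/W_{<-\lambda}M^\lor)^\lor \cong W_{\leq\lambda}M$ follows from exactness of $(-)^\lor$ and the annihilator description. Therefore (1) applied to $M^\lor$, together with the braiding (which respects filtrations by naturality), gives that $\mathbf{1}\to M^\lor\otimes M$ lands in $W_{\leq 0}$, which is what is required.
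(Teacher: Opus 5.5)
Your proof of (1) is correct and is essentially the paper's argument: you use dinaturality of $\text{ev}$ where the paper uses naturality of the adjunction bijections.

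\textbf{Gap in (2).} The gap sits exactly at the step you flag as the main obstacle. Put $B = W_{<\lambda_r}M\otimes M'$, $S=\sum_{\nu<\lambda_r}W_{\leq\nu}M\otimes W'_{\leq\lambda-\nu}M'\subset B$ and $T=M\otimes W'_{\leq\lambda-\lambda_r}M'$. Your induction then needs $B\cap(S+T)=S+(B\cap T)$. The other ingredient, $B\cap T = W_{<\lambda_r}M\otimes W'_{\leq\lambda-\lambda_r}M'\subset S$, is fine by exactness of $\otimes$.

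That identity is an instance of the modular law in $\mathcal{A}$. The lemma assumes only a rigid symmetric monoidal quasi-abelian category. There is no fiber functor, and nothing in the hypotheses supplies the modular law, which the appendix shows can genuinely fail in quasi-abelian categories.

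Passing to $\mathcal{L}_\mathcal{A}$ does not rescue this. The tensor filtration consists of sums formed in $\mathcal{A}$, and since $\text{Im}_{\mathcal{L}_\mathcal{A}}=\text{Coim}_\mathcal{A}$, such a sum is the saturation of the corresponding sum in $\mathcal{L}_\mathcal{A}$, in general strictly larger. The modular law in $\mathcal{L}_\mathcal{A}$ therefore says nothing about the identity you need; this discrepancy is exactly how the law fails, e.g.\ for Hausdorff topological abelian groups. This is quite apart from having to extend $\otimes$ to $\mathcal{L}_\mathcal{A}$ at all.

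The missing idea is the paper's duality trick, which needs no lattice identity:
\begin{itemize}
\item[(a)] The comparison map $\tilde{\text{gr}}\colon\text{gr}(M,W)\otimes\text{gr}(M',W')\to\text{gr}(M\otimes M')$ is evidently epi.
\item[(b)] $\varphi_{M,M'}\colon M'^\lor\otimes M^\lor\to(M\otimes M')^\lor$ respects the filtrations, by a pairing argument just like your (1).
\item[(c)] There are natural isomorphisms $\hat{\text{gr}}\colon\text{gr}((M,W)^\lor)\cong\text{gr}(M,W)^\lor$.
\end{itemize}
Imitating \cite[Lemma A.9]{ARTICLE:1}, these fit into a commutative hexagon. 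It shows that $\tilde{\text{gr}}_{(M',W')^\lor,(M,W)^\lor}$ has a left inverse built from $\text{gr}(\varphi)$, $\hat{\text{gr}}$ and $(\tilde{\text{gr}}_{(M,W),(M',W')})^\lor$. An epi with a left inverse is an isomorphism, and every object of $W\mathcal{A}^\text{pre}$ is a dual. Your route does work when the modular law holds, e.g.\ in the quasi-tannakian setting of \ref{cstr:2}, but that is a narrower statement than the lemma.

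\textbf{Gap in (3).} Part (3) inherits the gap. To read $\text{coev}_M$ as the transpose of $\text{ev}_{M^\lor}$, you must identify $(M^\lor\otimes M^{\lor\lor})^\lor$, with its dual filtration, with $M^{\lor\lor\lor}\otimes M^{\lor\lor}$, with its tensor filtration. Transposition does preserve filtered morphisms. But to land in $W_{\leq 0}(M^\lor\otimes M)$ you need $\varphi^{-1}$ to respect filtrations, i.e.\ $\varphi$ must be an isomorphism in $W\mathcal{A}^\text{pre}$.

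Checking that $M^{\lor\lor}\cong M$ is filtered covers only the easy part. Compatibility of dual filtrations with tensor filtrations is an identity of the same nature as (2). Rephrasing via the tensor--hom adjunction would be circular, since its filtered version is equivalent to $\text{coev}$ being filtered.

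In the paper, this falls out of the hexagon. Once $\tilde{\text{gr}}$ is an isomorphism, so is $\text{gr}(\varphi)$, hence $\varphi$ is an isomorphism in $W\mathcal{A}^\text{pre}$. Dualising the triangle identities then shows that $\bigl((\text{coev}_M)^\lor\varphi_{M^\lor,M},\,\varphi^{-1}_{M,M^\lor}(\text{ev}_M)^\lor\bigr)$ is an evaluation--coevaluation pair for $M^\lor$. By (1), $(\text{coev}_M)^\lor\varphi_{M^\lor,M}$ respects the filtrations, and therefore so does $\text{coev}_M$.
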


\begin{proof}
For (1), we need to show that the restriction of the evaluation map to
\begin{equation*}
W_{<0}\!\left(M \otimes M^\lor\right) = \sum_{\lambda \in \Lambda} W_{\leq\lambda}M \otimes \left(M/W_{\leq \lambda}M\right)^\lor
\end{equation*}
is zero. By adjunction and by construction of the action of $(-)^\lor$ on morphisms, the bijections $\text{Hom}_\mathcal{A}(X \otimes Y,Z) \cong \text{Hom}_\mathcal{A}\!\left(Y,X^\lor \otimes Z\right)$ are functorial in all three arguments. Therefore, for every $\lambda \in \Lambda$, the following diagram with the obvious maps commutes:
\[\begin{tikzcd}
{\text{Hom}_\mathcal{A}\left(M^\lor,M^\lor\right)} & {\text{Hom}_\mathcal{A}\!\left(M \otimes M^\lor,\mathbf{1}\right)} \\
{\text{Hom}_\mathcal{A}\!\left(M^\lor,\left(W_{\leq\lambda}M\right)^\lor\right)} & {\text{Hom}_\mathcal{A}\!\left(W_{\leq\lambda}M \otimes M^\lor,\mathbf{1}\right)} \\
{\text{Hom}_\mathcal{A}\!\left(\left(M/W_{\leq\lambda}M\right)^\lor,\left(W_{\leq\lambda}M\right)^\lor\right)} & {\text{Hom}_\mathcal{A}\!\left(W_{\leq\lambda}M \otimes \left(M/W_{\leq\lambda}M\right)^\lor,\mathbf{1}\right)}
\arrow["\cong", from=1-1, to=1-2]
\arrow[from=1-1, to=2-1]
\arrow[from=1-2, to=2-2]
\arrow["\cong", from=2-1, to=2-2]
\arrow[from=2-1, to=3-1]
\arrow[from=2-2, to=3-2]
\arrow["\cong", from=3-1, to=3-2]
\end{tikzcd}\]
Along the outer square, $\text{id}_{M^\lor}$ gets mapped as follows:
\[\begin{tikzcd}[column sep=small]
{\text{id}_{M^\lor}} && {\text{ev}_M} \\
{\left(W_{\leq\lambda}M \hookrightarrow M \twoheadrightarrow M/W_{\leq\lambda}M\right)^\lor = 0} & 0 & {\text{ev}_M \circ \left(W_{\leq\lambda}M \otimes \left(M/W_{\leq\lambda}M\right)^\lor \hookrightarrow M \otimes M^\lor\right)}
\arrow[maps to, from=1-1, to=1-3]
\arrow[maps to, from=1-1, to=2-1]
\arrow[maps to, from=1-3, to=2-3]
\arrow[maps to, from=2-1, to=2-2]
\end{tikzcd}\]
This establishes (1). \\
Let us turn to (2) and (3). For any other $(M',W') \in W\mathcal{A}^\text{pre}$ and the filtrations on the duals defined above, the canonical morphism $\varphi_{M,M'}: (M')^\lor \otimes M^\lor \to (M \otimes M')^\lor$ in $\mathcal{A}$ respects the filtrations. Let $\varphi_{(M,W),(M',W')}$ denote the resulting morphism in $W\mathcal{A}^\text{pre}$. In $\mathcal{A}$, $\varphi_{M,M'}$ is an isomorphism. Furthermore, one has functorial isomorphisms $\text{gr}_\lambda\!\left((M,W)^\lor\right) \stackrel{\cong}{\to} \left(\text{gr}^W_{-\lambda}M\right)^\lor$ for all $\lambda \in \Lambda$, hence a functorial isomorphism $\hat{\text{gr}}_{(M,W)}: \text{gr}\!\left((M,W)^\lor\right) \stackrel{\cong}{\to} \left(\text{gr}(M,W)\right)^\lor$.  Also, since $\otimes$ is bi-exact on $\mathcal{A}$ due to rigidity, we get natural morphisms
\begin{equation*}
\bigoplus_\nu \text{gr}_{\nu}^W M \otimes \text{gr}_{\lambda-\nu}^{W'} M' \cong \bigoplus_\nu \frac{W_{\leq\nu}M \otimes W'_{\leq\lambda-\nu}M'}{W_{<\nu}M \otimes W'_{\leq\lambda-\nu}M' + W_{\leq\nu}M \otimes W'_{<\lambda-\nu}M'} \to \text{gr}_\lambda^{W \otimes W'}\!\left(M \otimes M'\right),
\end{equation*}
thus a natural morphism $\tilde{\text{gr}}_{(M,W),(M',W')}: \text{gr}(M,W) \otimes \text{gr}(M',W') \to \text{gr}(M \otimes M', W \otimes W')$. \\
We cannot yet apply \cite[Lemma A.9, Corollary A.11]{ARTICLE:1} to these transformations, but imitate the argument. The diagram
\[\begin{tikzcd}
{\text{gr}\!\left((M',W')^\lor\right) \otimes \text{gr}\!\left((M,W)^\lor\right)} && {\text{gr}\!\left((M',W')^\lor \otimes (M,W)^\lor\right)} \\
{\text{gr}(M',W')^\lor \otimes \text{gr}(M,W)^\lor} && {\text{gr}\!\left(\left((M,W) \otimes (M',W')\right)^\lor\right)} \\
{\left(\text{gr}(M,W) \otimes \text{gr}(M',W')\right)^\lor} && {\text{gr}\!\left((M,W) \otimes (M',W')\right)^\lor}
\arrow["{\tilde{\text{gr}}_{(M',W')^\lor,(M,W)^\lor}}", from=1-1, to=1-3]
\arrow["{\hat{\text{gr}}_{(M',W')} \otimes \hat{\text{gr}}_{(M,W)}}"', "\cong", from=1-1, to=2-1]
\arrow["{\text{gr}(\varphi_{(M,W),(M',W')})}", from=1-3, to=2-3]
\arrow["{\varphi_{\text{gr}(M,W),\text{gr}(M',W')}}"', "\cong", from=2-1, to=3-1]
\arrow["{\hat{\text{gr}}_{(M,W) \otimes (M',W')}}", "\cong"', from=2-3, to=3-3]
\arrow["{(\tilde{\text{gr}}_{(M,W),(M'W')})^\lor}", from=3-3, to=3-1]
\end{tikzcd}\]
commutes: We saw in (1) that the evaluation map $\text{ev}_M: M \otimes M^\lor \to \mathbf{1}$ in $\mathcal{A}$ is a morphism of filtered objects with respect to the filtration we put on $M^\lor$, and one checks that the transformation $\hat{\text{gr}}$ fits into the commutative diagram of \cite[A.4]{ARTICLE:1} (using that $\hat{\text{gr}}$ is induced from the morphisms $\left(\text{gr}^W_\lambda M \hookrightarrow M/W_{<\lambda}M\right)^\lor$), which together is sufficient for the proof of commutativity in \cite[Lemma A.9]{ARTICLE:1} to carry over. \\
So $f \defeq \tilde{\text{gr}}_{(M',W')^\lor,(M,W)^\lor}$ has a left inverse. It is clear that $f$ is epi, thus it is an isomorphism. As every object of $W\mathcal{A}^\text{pre}$ is isomorphic to $(M,W)^\lor$ for some $(M,W) \in W\mathcal{A}^\text{pre}$, we conclude that the transformation $\tilde{\text{gr}}$ is an isomorphism. \\
Consequently, the commutative diagram shows that the morphisms $\varphi_{(M,W),(M',W')}$ induce isomorphisms on the graded pieces, hence are isomorphisms in $W\mathcal{A}^\text{pre}$. Dualising the triangle identities, one finds that $(\left(\text{coev}_M\right)^\lor\!\varphi_{M^\lor\!,M},~\varphi_{M,M^\lor}^{-1}\left(\text{ev}_M\right)^\lor)$ is an evaluation-coevaluation pair for $M^\lor \in \mathcal{A}$. By (1), $\left(\text{coev}_M\right)^\lor\!\varphi_{M^\lor\!,M}$ respects the filtrations, and therefore so does $\text{coev}_M$. This completes the proof of (2) and (3).
\end{proof}

\begin{intermediate}
When $\mathcal{A}$ is quasi-tannakian, one can check directly that $(-)^\lor$ defines a tensor endofunctor on $W\mathcal{A}^\text{pre}$, because the strict monics $W_{\leq\lambda}\varphi$ become vector space isomorphisms after applying the faithful fiber functor (use bases adapted to the filtrations). \\

To conclude, we look at examples of \ref{cstr:2}.
\end{intermediate}

\begin{subexample}
\namedlabel{ex:6}{Example \thesubsubsection}
In \ref{ex:4}, let $\mathcal{B}$ be a tannakian category. The analogue of \ref{cstr:2}(1) for descending filtrations shows that then $\mathcal{B}_1$ is canonically a quasi-tannakian category, and the rank function from \ref{ex:4} is the quasi-tannakian rank.
\begin{itemize}
\item[(1)] Call an object of $\mathcal{B}_1$ \textit{quasi-simple} if it admits no nontrivial strict subobject. Necessarily, when $0 \neq (M,F) \in \mathcal{B}_1$ is quasi-simple, the filtration $F$ has precisely one break. Every rank one object is quasi-simple, and these form the group of isomorphism classes of invertible objects $\text{Pic}\!\left(\mathcal{B}_1\right)$. \\
By \cite[Lemma 2.2.1]{ARTICLE:1}, the quasi-tannakian determinant $\text{det}(M,F) = \bigwedge^{\text{rk}(M,F)}(M,F)$, defined as the image of the antisymmetrisation map on $(M,F)^{\otimes\text{rk}(M,F)}$, is a homomorphism $\text{det}: K_0\!\left(\mathcal{B}_1\right) \to \text{Pic}\!\left(\mathcal{B}_1\right)$. Moreover, here, the assignment of the unique break is a homomorphism $\delta: \text{Pic}\!\left(\mathcal{B}_1\right) \to \Lambda$. The formula $\delta\!\left(\text{det}(M,F)\right) = \sum_{\nu \in \Lambda} \text{rk}_\mathcal{B}\!\left(\text{gr}^\nu_F M\right)\nu$ holds when $(M,F)$ is quasi-simple, and since both sides are additive along short exact sequences in $\mathcal{B}_1$, it holds in general. So
\begin{equation*}
\mu(M,F) \defeq \frac{\delta\!\left(\text{det}(M,F)\right)}{\text{rk}_\mathcal{B}(M)} = \frac{\sum_{\nu \in \Lambda} \text{rk}_\mathcal{B}\!\left(\text{gr}^\nu_F M\right)\nu}{\text{rk}_\mathcal{B}(M)}
\end{equation*}
is the slope function on $\mathcal{B}_1$ from \ref{ex:4}, which is hence \textit{determinantal} \cite[Definition 2.2.6]{ARTICLE:1} in the present case. \\
In this setup, the verification of condition \eqref{eq:dagger} in \ref{ex:4}(1) simplifies, as \cite[Proposition 2.2.2(2),(3)]{ARTICLE:1} asserts that taking the determinant of a morphism between objects of the same rank preserves epi-monics and detects isomorphisms. This allows one to reduce to considering epi-monics between rank one objects $(M,F) \to (M',F')$ which fail to be isomorphisms. It is then clear that $\text{deg}(M,F) < \text{deg}(M',F')$. In other words, verifying condition \eqref{eq:dagger} amounts to showing that $\delta$ is strictly increasing with respect to the partial order on $\text{Pic}\!\left(\mathcal{B}_1\right)$ from \cite[Theorem 2.2.8]{ARTICLE:1} (while to get a slope function in the sense of \cite[Definition 1.3.1]{ARTICLE:1}, it suffices that $\delta$ be non-decreasing). \\
With the description of the semistable objects in \ref{ex:4}(3) already at hand, one sees that the slope function from \ref{ex:4} is even $\otimes$-multiplicative. Therefore, that it is determinantal is also a consequence of \cite[Theorem 2.3.3(2b)]{ARTICLE:1}, and the homomorphism $\delta: \text{Pic}\!\left(\mathcal{B}_1\right) \to \Lambda$ is necessarily given by assigning the unique break.
\item[(2)] The equivalence $\mathcal{B}_2 \simeq \left(\mathcal{B}_1\right)_1$ respects the obvious tensor structures on both sides, and by \ref{cstr:2}(1), $\left(\mathcal{B}_1\right)_1$ is quasi-tannakian. Similar remarks as in (1) apply: Take for $\delta: \text{Pic}\!\left(\mathcal{B}_2\right) \to \Lambda$ the homomorphism assigning to a rank one object $0 \neq (M,F,\bar{F})$ the sum of the unique break of $F$ and the unique break of $\bar{F}$ to recover the slope function from \ref{ex:5}. It is $\otimes$-multiplicative.
\end{itemize}
\end{subexample}

\begin{subexample}
All examples of \ref{sec:4} are instances of \ref{cstr:2}(2) and thus tannakian: abstract mixed Hodge structures $W\mathcal{B}_2$ when $\mathcal{B}$ is a tannakian category, mixed twistor structures and mixed monodromy-weight structures. This is well-known in each particular context (for Hodge structures at least when $\mathcal{B}$ is the category of finite-dimensional vector spaces over a field, not necessarily of characteristic zero), where one argues as is demonstrated in \cite[Proposition 1.10]{ARTICLE:10} for the case of twistor structures.
\end{subexample}

\end{appendices}

\bibliography{weight_filtrations_via_slopes}
\bibliographystyle{alpha}

\end{document}